\documentclass[12pt]{article}
\usepackage{tikz}
\usetikzlibrary{arrows}
\usepackage[framemethod=tikz]{mdframed}
\usepackage{wrapfig}
\usepackage{amsmath,amssymb}
\usepackage{type1cm}
\usepackage{amsthm}
\usepackage{mathrsfs}
\usepackage{mathtools}
\usepackage{enumerate}
\usepackage[all]{xy} 
\usepackage[vcentermath,enableskew]{youngtab}
\usepackage{ytableau}
\usepackage{diagbox}
\AtBeginDocument{%
   \def\MR#1{}
}

\DeclareMathOperator{\GL}{GL}

\DeclareMathOperator{\GSp}{GSp}
\DeclareMathOperator{\Sp}{Sp}
\DeclareMathOperator{\ch}{ch}
\DeclareMathOperator{\diag}{diag}

\DeclareMathOperator{\Ad}{Ad}
\DeclareMathOperator{\Adm}{Adm}

\DeclareMathOperator{\Irr}{Irr}

\DeclareMathOperator{\pfn}{pfn}

\DeclareMathOperator{\supp}{supp}

\DeclareMathOperator{\LP}{LP}

\newcommand\F{\mathbb{F}}

\newcommand\Fq{\mathbb F_q}
\newcommand\aFq{\overline{\mathbb F}_q}
\newcommand\cA{\mathcal A}

\newcommand\cO{\mathcal O}

\newcommand\cG{\mathcal G}

\newcommand\cS{\mathcal S}

\newcommand\Gm{\mathbb G_m}

\newcommand\Q{\mathbb Q}
\newcommand\bQp{\breve{\Q}_p}
\newcommand\R{\mathbb R}
\newcommand\A{\mathbb A}

\newcommand\G{\mathbb G}
\newcommand\J{\mathbb J}

\newcommand\Z{\mathbb Z}
\newcommand\tW{\tilde W}

\newcommand\bS{\mathbb S}
\newcommand\SW{{^{\bS}\tilde W}}
\newcommand\SAdm{{^{\bS}\mathrm{Adm}}}

\newcommand\tS{\tilde{\mathbb S}}
\newcommand\inv{\operatorname{inv}}

\newcommand\ld{\lambda}

\newcommand\vph{\varphi}
\newcommand\vp{\varpi}
\newcommand\Y{X_*(T)}

\newcommand\la{\langle}
\newcommand\ra{\rangle}
\newcommand\pc{\preceq}
\newcommand\Xl{X^\ld_\mu(\tau)}

\newcommand\cAm{\cA_{\mu,\tau}}
\newcommand\cSm{\cS_{\mu,\tau}}
\newcommand\tP{\tilde \Phi}
\newcommand\ta{\tilde \alpha}

\theoremstyle{definition}
\newtheorem{theo}{Theorem}[section]
\newtheorem{prop}[theo]{Proposition}

\newtheorem{lemm}[theo]{Lemma}
\newtheorem{coro}[theo]{Corollary}
\newtheorem{exam}[theo]{Example}
\newtheorem{rema}[theo]{Remark}

\newtheorem{thm}{Theorem}[section]

\begin{document}
\title{Some generalizations of Oort's conjecture}
\author{Ryosuke Shimada and Teppei Takamatsu}
\date{}
\maketitle

\begin{abstract}
For a prime $p\geq5$, let $\mathscr S_g$ be the moduli space over $\overline{\F}_p$ of $g$-dimensional principally polarized supersingular abelian varieties.
We show that each of the following loci contains an open dense subscheme on which the principally polarized abelian varieties have automorphism group $\{\pm1\}$: (i) certain supersingular Ekedahl--Oort strata when $g$ is even, (ii) the loci in $\mathscr S_g$ with non-supersingular Ekedahl--Oort invariants of positive Coxeter type when $g\geq3$ and (iii) the locus in $\mathscr S_g$ with $a$-number at least $2$ when $g\geq4$.
Consequently, for $g\ge 4$, the complement in $\mathscr S_g$ of the open locus where the automorphism group is $\{\pm1\}$ has codimension at least $2$.
These results confirm Oort's conjecture for $p\ge 5$.
We reduce them to statements about affine Deligne--Lusztig varieties for $\GSp_{2g}$ and prove analogues of (ii) for $\GL_{2g}$ and $\operatorname{GSO}_{4m}$.
\end{abstract}

\section{Introduction}
\label{introduction}

Fix a prime $p$ and an integer $g\geq 1$, and let $\mathscr A_g$ be the moduli space over $\overline{\F}_p$ of principally polarized abelian varieties of dimension $g$ with a level $N$ structure for some integer $N$ coprime to $p$.
For an algebraically closed field $k$ of characteristic $p$ and $x\in \mathscr A_g(k)$, let $(A_x,\vph_x)$ denote the corresponding principally polarized abelian variety.
The moduli space $\mathscr A_g$ admits several natural stratifications defined by invariants of the associated abelian varieties.
The Newton and Ekedahl--Oort stratifications are defined by the isogeny class of $A_x[p^\infty]$ and the isomorphism class of $A_x[p]$, respectively.
The strata of the $a$-number stratification consist of points $x$ with the same value of $a(A_x)\coloneqq\dim_k\operatorname{Hom}_k(\alpha_p,A_x)$.
Let $\mathscr S_g$ denote the unique closed Newton stratum of $\mathscr A_g$, called the supersingular locus. Then $x$ lies in $\mathscr S_g$ if and only if $A_x$ is isogenous to a product of $g$ copies of a supersingular elliptic curve.

Chai and Oort \cite{CO11} proved that generically on each non-supersingular Newton stratum of $\mathscr A_g$, one has $\operatorname{Aut}(A_x,\vph_x)=\{\pm1\}$.
Oort \cite[Problem 4]{EMO01} also conjectured that, for $g\ge 2$, there is an open dense subscheme $U\subseteq\mathscr S_g$ such that $\operatorname{Aut}(A_x,\vph_x)=\{\pm1\}$ for every $x\in U(\overline{\F}_p)$, even though the endomorphism ring of every supersingular abelian variety of dimension $g$ has $\mathbb Z$-rank $4g^2$.

Ibukiyama \cite{Ibukiyama20} first proved the conjecture for $g=2$ and $p>2$ and also showed that it fails for $(g,p)=(2,2)$. Karemaker and Pries \cite{KP19} independently proved the conjecture for $g=2$ and $p>2$. Karemaker, Yobuko and Yu \cite{KYY21} proved it for $g=3$ and $p>2$ and showed that it fails for $(g,p)=(3,2)$. Dragutinovi\'c \cite{Dragutinovic24} proved the conjecture for $g=4$ and $p>2$, and also gave a new proof for $g=3$ and $p>2$.

A breakthrough came with the work of Karemaker and Yu \cite{KY24}, who proved that when $g$ is even and $p\geq5$, the automorphism group is $\{\pm1\}$ on an open dense subscheme of the maximal supersingular Ekedahl--Oort stratum. Here a supersingular Ekedahl--Oort stratum is an Ekedahl--Oort stratum contained in $\mathscr S_g$. This result implies Oort's conjecture for even $g$ when $p\geq5$. They also proved Oort's conjecture for $g=4$ and all $p$. Viehmann \cite{Viehmann26} finally proved Oort's conjecture for $g\geq2$ and every prime $p$ with $(g,p)\neq(2,2),(3,2)$, using Rapoport--Zink's uniformization theorem to reduce the problem to an explicit description of the $a=1$ locus of certain Rapoport--Zink moduli spaces of polarized supersingular $p$-divisible groups.

The perfections of reduced special fibers of Rapoport--Zink spaces are affine Deligne--Lusztig varieties. Let $\bQp$ be the completion of the maximal unramified extension of $\Q_p$, let $\breve{\Z}_p$ be its ring of integers, and let $\sigma$ be the Frobenius automorphism of $\bQp/\Q_p$. Set $K=\GSp_{2g}(\breve{\Z}_p)$, and let $\mu$ be the cocharacter of $\GSp_{2g}$ given by $z\mapsto\operatorname{diag}(z,\ldots,z,1,\ldots,1)$, where both $z$ and $1$ are repeated $g$ times. Let $p^\mu$ denote the image of $p$ under $\mu$. Set $\tau=\begin{pmatrix}0&p\cdot1_g\\1_g&0\end{pmatrix}$. The perfection of $\mathscr S_g$ is uniformized by the affine Deligne--Lusztig variety whose set of closed points is
$$X_\mu(\tau)(\overline{\F}_p)=\{xK\in\GSp_{2g}(\bQp)/K\mid x^{-1}\tau\sigma(x)\in Kp^\mu K\}.$$
The $\sigma$-centralizer $\J$ of $\tau$ acts on $X_\mu(\tau)$ by left multiplication.

Let $w$ be an element of the Iwahori--Weyl group $\tW=N_{\GSp_{2g}}(T)(\bQp)/T(\breve{\Z}_p)$, where $T\subseteq\GSp_{2g}$ is the diagonal torus.
At Iwahori level, one similarly has the affine Deligne--Lusztig variety $X_w(\tau)$, and we denote its image in the affine Grassmannian $\GSp_{2g}(\bQp)/K$ by $\pi(X_w(\tau))$.
Let $\Adm(\mu)\subset\tW$ be the $\mu$-admissible set in the Iwahori--Weyl group, and let $\SAdm(\mu)\subset\Adm(\mu)$ be the subset parametrizing the Ekedahl--Oort strata (cf.\ \S\ref{ADLV}).
If $w\in\SAdm(\mu)$, the intersection of $\mathscr S_g$ with the Ekedahl--Oort stratum attached to $w$ is uniformized by $\pi(X_w(\tau))$ (cf.\ \cite[\S5.1]{Wang21}).

By \cite[Proposition 5.6]{GHN19}, the Ekedahl--Oort stratum attached to $w$ is supersingular if and only if $\supp_\sigma(w)\coloneqq\bigcup_{n\in\mathbb Z}\tau^n(\supp(w\tau^{-1}))$ is a proper subset of the set $\tS$ of simple affine reflections.
If this is the case, then $\J$ acts transitively on the set of irreducible components of $\pi(X_w(\tau))$, which are pairwise disjoint.
Moreover, each irreducible component is isomorphic to a Deligne--Lusztig variety (see also \cite{Harashita10}), and its stabilizer in $\J$ is a parahoric subgroup.
In general, when $\supp_\sigma(w)=\tS$, the geometry of $\pi(X_w(\tau))$ is very complicated.
However, the first author, together with Schremmer and Yu, proved in \cite{SSY23} that the same conclusions hold for $\pi(X_w(\tau))$ when $w$ is of positive Coxeter type, except that each irreducible component is a product of a Deligne--Lusztig variety and an affine space.
Roughly speaking, an element $w$ is of positive Coxeter type if its finite part is a Coxeter element (cf.\ \S\ref{DL method}).

Recall that the $a$-number of an abelian variety can also be computed from the corresponding Dieudonn\'e module. Since each point $xK\in X_\mu(\tau)$ corresponds to the Dieudonn\'e lattice $x\breve{\Z}_p^{2g}$, this defines the $a=1$ and $a\geq2$ loci in $X_\mu(\tau)$.

By studying these strata in $X_\mu(\tau)$ and the action of $\J$, we prove the following.
\begin{thm}
\label{main theorem introduction}
Assume that $p\geq5$. Each of the following loci contains an open dense subscheme $U$ such that $\operatorname{Aut}(A_x,\vph_x)=\{\pm1\}$ for every $x\in U(\overline{\F}_p)$:
\begin{enumerate}[(i)]
\item for even $g$, the supersingular Ekedahl--Oort stratum attached to $w\in\SAdm(\mu)$ such that $\supp_\sigma(w)$ is a maximal proper $\tau$-stable subset of $\tS$;
\item for $g\geq3$, the intersection of $\mathscr S_g$ with the Ekedahl--Oort stratum attached to $w\in\SAdm(\mu)$ such that $\supp_\sigma(w)=\tS$ and $w$ is of positive Coxeter type;
\item for $g\geq4$, the locus in $\mathscr S_g$ where the $a$-number is at least $2$.
\end{enumerate}
\end{thm}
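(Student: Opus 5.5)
The strategy is to move everything to the affine Deligne--Lusztig side and work with the $\J$-action there. Fix $x\in\mathscr S_g(\overline{\F}_p)$. By Rapoport--Zink uniformization it corresponds to a point $yK\in X_\mu(\tau)$, and $\operatorname{Aut}(A_x,\vph_x)$ is identified with $\Gamma\cap \operatorname{Stab}_{\J}(yK)$. Here $\Gamma\subset \J(\Q_p)$ is a discrete cocompact arithmetic subgroup coming from the quasi-split inner form of $\GSp_{2g}$ over $\Q$, and it is torsion-free up to the centre after passing to level $N$. It is therefore enough to show the following. For each locus $Z$ in (i)--(iii), let $\tilde Z\subseteq X_\mu(\tau)$ be its preimage. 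On each irreducible component $C$ of $\tilde Z$, the set of points whose stabilizer in $\J$ contains a torsion element $\gamma\neq\pm1$ of $\J(\Q_p)$ of the kind that can occur in $\Gamma$ lies in a proper closed subset. Since $\Gamma$ acts with finite stabilizers on components, only finitely many $\Gamma$-conjugacy classes of such $\gamma$ matter on a given component. For a fixed $\gamma$ the fixed locus $C^\gamma$ is closed, so all we have to check is $C^\gamma\neq C$.

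The components are described as follows. For (i) and (ii), each component of $\pi(X_w(\tau))$ is a Deligne--Lusztig variety $Y$ in the reductive quotient $\bar P$ of a parahoric $P\subseteq\J$, times an affine space in case (ii) by \cite{SSY23}. The stabilizer is $P$, and the action factors through $\bar P$ on the DL factor, twisted on the affine factor. For (iii), I would show that the $a\ge2$ locus is a union of closures of suitable EO strata of the type in (i)/(ii) when $g\ge4$, or that it is covered by such strata up to codimension. This reduces (iii) to (i) and (ii) applied to the finitely many top-dimensional strata contained in $\{a\ge2\}$. The hypothesis $g\ge4$ is what guarantees that these strata have the expected dimension.

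The key case is then the following. Let $\gamma\in P$ be of finite order $n$ with $\gamma\notin\{\pm1\}$ modulo the pro-unipotent radical. Since $p\ge5$ and $n$ is a unit at $p$ (the possible orders are bounded via cyclotomic degrees dividing $2g$), we can reduce to its image $\bar\gamma\in\bar P(\F_{p^r})$, which is a nontrivial semisimple element. A DL variety $X(c)$ for a Coxeter-type element $c$ has the property that a non-central semisimple rational element acts with fixed points of strictly smaller dimension: the fixed locus lies in DL varieties of the proper centralizer, and Lusztig's result says Coxeter DL varieties have no fixed points under non-central elements of maximal nonsplit tori. In case (ii), if $\bar\gamma$ is central in $\bar P$ but $\gamma\ne\pm1$, I would instead use the affine-space factor. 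There $\gamma$ acts through a nontrivial character, as one checks from the explicit coordinates in \cite{SSY23}, so its fixed locus is a proper linear subspace.

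The main obstacle is the case where $\gamma$ maps to the centre of $\bar P$ without being $\pm1$, for example scalars $\zeta\in\breve{\Z}_p^\times$ with $\zeta^2\ne1$ that are similitudes but not isometries, or elements that are central only in a Levi factor. I would rule these out with two facts: the polarization forces $\operatorname{Aut}$ to lie in $\Sp$, so the multiplier is $1$, and the central torus of $\bar P$ intersected with $\Sp$ is $\{\pm1\}^k$, one factor per block. Block-sign elements $\neq\pm1$ must then be shown to move generic points. In (i), the maximality of $\supp_\sigma(w)$ together with $g$ even makes $\bar P$ have a single $\Sp$-type block with a unitary/Coxeter DL variety, so the centre meets $\Sp$ only in $\{\pm1\}$. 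In (ii), I would verify directly from the lattice description that the mixed-sign elements do not fix a generic lattice.
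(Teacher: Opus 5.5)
Your treatment of (i) and (ii) has the right overall shape, and for (ii) it is essentially the paper's. You reduce, via uniformization and the finiteness of the action of a compact stabilizer, to showing that a finite-order $j\in\J^1\cap P$ fixing a whole component is $\pm1$. You then look at the reductive quotient, and in the unitary case at an affine-line fibre where the central element acts by $x\mapsto c^{\pm(1-p)}x+a$. There are, however, two genuine gaps.

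\textbf{Part (iii) does not reduce to (i) and (ii).} Suppose an irreducible component $C$ of the $a\ge2$ locus were the closure of a component $Z$ of an EO stratum of type (i) or (ii). The components of such strata are pairwise disjoint, so $\operatorname{Stab}_\J(C)=\operatorname{Stab}_\J(Z)$. That stabilizer is very special in case (ii) (Theorem~\ref{simple}(iii)) and equals $\J\cap P_{\tS\setminus\{s_{g/2}\}}$ in case (i) (Lemma~\ref{maximal support}). But the components of the $a\ge2$ locus form $\lfloor g/2\rfloor$ orbits $\J\,\overline{X_\mu^{\lambda^{(k)}}(\tau)}$ (Corollary~\ref{irreducible components a-number}, proved with the $\J$-stratification). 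For $g$ even and $k=g/2$ the stabilizer is $\J\cap P_{\tS\setminus\{s_0,s_g\}}$, which is of neither kind, already for $g=4$. For $g=2m+1$ and $k=m$ it is $\J\cap P_{\tS\setminus\{s_m,s_{m+1}\}}$. That is exactly the parahoric for which the analogue of (i) fails, since the centre $\{\pm1\}\times\mathrm U_1(\F_p)$ of the reductive quotient acts trivially on the fine Deligne--Lusztig variety. These components need new input. The paper projects $\overline{X_\mu^\lambda(\tau)}$ to $\overline P_J/\overline Q_\lambda$ via $\rho_\lambda$ to get centrality of $\bar j$. It then kills the nontrivial central elements using explicit curves $u(x)p^\lambda K$, resp.\ $u(x,y)p^\lambda K$ with $y^p+y+x^{p+1}=0$, inside $X_\mu^\lambda(\tau)$. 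In the last case it must also show by hand that $u^+$ lies in a conjugate of $V_{p,1}$, because $J$ is not very special.

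\textbf{Torsion in the pro-unipotent radical is not handled.} Your claim that the order of $\gamma$ is prime to $p$ is false in general. For example, the Hermitian curve $y^p+y=x^{p+1}$ has superspecial Jacobian and the automorphism $y\mapsto y+a$ ($a^p+a=0\ne a$) of order $p$. So $\bar\gamma$ need not be semisimple. More seriously, nothing in your argument excludes a nontrivial finite-order $\gamma\in\pm(\J^1\cap P^+)$. Its image is $\pm1$, it acts on the affine fibres by translations rather than through a character, and you give no reason it cannot fix a component pointwise. This is exactly where the paper uses $p\ge5$. By Lemma~\ref{lemm:torsion-free}, $V_{p,1}=1+\varpi_D\operatorname{Mat}_g(\cO_D)$ is torsion-free, and $\J^1\cap P_J^+$ sits in a conjugate of it. So once $\bar j=\pm1$ is established, $j=\pm1$ follows.

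Relatedly, in (i) the variety $\pi(Y(w))$ is a fine Deligne--Lusztig variety for $\operatorname{Res}_{\F_{p^2}/\F_p}\Sp_g$ and is usually not of Coxeter type, so Lusztig's Coxeter-type results do not apply. The paper instead uses that its closure contains $H^\delta/(H^\delta\cap Q)$, on which $H^\delta$ acts faithfully modulo the centre (Proposition~\ref{center}). That argument works for arbitrary elements, not only semisimple ones.
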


For (i), (ii) and (iii), see Theorem \ref{maximal supersingular EO}, Theorem \ref{main theo} and Theorem \ref{main theorem a-number}, respectively. The maximal supersingular Ekedahl--Oort stratum satisfies the assumptions of (i) (cf.\ Example \ref{maximal supersingular EO example}), so (i) generalizes \cite[Theorem A]{KY24}. In their notation, the union of the strata in (i) is $\mathscr S_{g,\frac{g}{2}}^{\mathrm{eo}}$. The $a=1$ locus in $\mathscr S_g$ is the intersection of $\mathscr S_g$ with an Ekedahl--Oort stratum satisfying the assumptions of (ii) (cf.\ Remark \ref{dense Coxeter stratum}), so (ii) generalizes \cite[Theorem 1.1]{Viehmann26} for $p\ge 5$. We further prove analogues of (ii) for $\GL_{2g}$ and $\operatorname{GSO}_{4m}$ (cf.\ Theorem \ref{main theo GL} and Theorem \ref{main theo GSO}). Viehmann also proved such an analogue for the $a=1$ locus for $\GL_{2g}$ \cite[Theorem 5.1]{Viehmann26}. The assertion (iii) is a much stronger statement than Oort's conjecture (cf.\ Remark \ref{Oort conjecture from a-number loci}).

Let $V\subset\mathscr A_g$ be the open subscheme characterized by
$$V(\overline{\F}_p)=\{x\in\mathscr A_g(\overline{\F}_p)\mid \operatorname{Aut}(A_x,\vph_x)=\{\pm1\}\}.$$
For $g\geq 2$ and $p\geq 5$, Oort's conjecture and the description of the geometric fibers show that $\mathscr S_g\setminus V$ is the non-\'{e}tale locus of the level-forgetting morphism to the coarse moduli space of supersingular principally polarized abelian varieties.
Combining a slight strengthening of Theorem \ref{main theorem introduction} (ii) with Theorem \ref{main theorem introduction} (iii), we obtain the following.
\begin{thm}
Assume that $g\geq4$ and $p\geq5$. Then
$\operatorname{codim}_{\mathscr S_g}(\mathscr S_g\setminus V)\geq2.$
\end{thm}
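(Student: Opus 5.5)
The plan is to stratify $\mathscr S_g$ by the $a$-number into the open $a=1$ locus and the closed $a\geq2$ locus, and to bound the codimension of $\mathscr S_g\setminus V$ in each piece separately.

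Recall that $\dim\mathscr S_g=\lfloor g^2/4\rfloor$ and that the $a\geq2$ locus has codimension $1$ in $\mathscr S_g$ (for $g\geq2$). By Theorem \ref{main theorem introduction} (iii) (Theorem \ref{main theorem a-number}), for $g\geq4$ the set $V$ meets every irreducible component of the $a\geq2$ locus in an open dense subset. Hence $(\mathscr S_g\setminus V)\cap\{a\geq2\}$ is nowhere dense in a codimension-$1$ closed subset. Every irreducible component of this intersection therefore has codimension at least $2$ in $\mathscr S_g$. Equidimensionality of the $a\geq2$ locus is not needed for this: the argument works component by component.

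It remains to treat the $a=1$ locus. By Remark \ref{dense Coxeter stratum}, it is the intersection of $\mathscr S_g$ with a single Ekedahl--Oort stratum attached to some $w\in\SAdm(\mu)$ of positive Coxeter type with $\supp_\sigma(w)=\tS$. This locus is open and dense in $\mathscr S_g$ and is uniformized by $\pi(X_w(\tau))$. By \cite{SSY23}, the irreducible components of $\pi(X_w(\tau))$ are pairwise disjoint and permuted transitively by $\J$. Each component is $Y\times\mathbb A^r$, where $Y$ is a classical Deligne--Lusztig variety, and its stabilizer in $\J$ is a parahoric subgroup $P$.

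I would reuse the proof of Theorem \ref{main theorem introduction} (ii) (Theorem \ref{main theo}) and strengthen its output. That proof shows that the non-generic automorphism locus is contained in the union of fixed-point loci of torsion elements $\gamma\in P\setminus\{\pm1\}$, after reduction to the finite reductive quotient together with the affine-space coordinates. The slight strengthening I need is that each such fixed-point locus has codimension at least $2$ in $Y\times\mathbb A^r$, rather than merely being proper. I would obtain it with the same dimension count used in (ii): a nontrivial semisimple element of order prime to $p$ (here $p\geq5$) has centralizer of dimension at most the full dimension minus an explicit gap. Its fixed locus on $Y$ is a union of Deligne--Lusztig varieties for the centralizer, and its fixed locus on $\mathbb A^r$ is a proper linear subspace. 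Combining the losses on the two factors, the codimension should be at least $2$ once $g\geq4$. There are only finitely many conjugacy classes of such $\gamma$ up to the relevant equivalence, so the union of the fixed-point loci is a finite union of closed subsets of codimension at least $2$. Transferring through the uniformization then gives $\operatorname{codim}\geq2$ for $(\mathscr S_g\setminus V)\cap\{a=1\}$ in $\mathscr S_g$.

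Taking the union of the two cases proves the theorem. I expect the main obstacle to be the codimension-$2$ estimate on $Y\times\mathbb A^r$. Elements $\gamma$ acting as $-1$ on some factors, or as reflection-like involutions, might fix a divisor in $Y$. For these I would check that $\gamma$ acts nontrivially on the affine-space factor $\mathbb A^r$, and use that extra loss of at least one dimension. This is where the hypothesis $g\geq4$ should enter: it should make $r$ large enough and the Coxeter-type Deligne--Lusztig variety rigid enough for this to work.
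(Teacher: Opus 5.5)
Your decomposition into the $a\geq2$ locus and the $a=1$ locus is the paper's, and your treatment of the $a\geq2$ part (Theorem \ref{main theorem a-number} together with the fact that this locus has codimension $1$) is the paper's argument. The gap is in the $a=1$ locus. There the codimension-$2$ bound is only asserted ("should be at least $2$"), and the mechanism you propose does not produce it.

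First, the proof of Theorem \ref{main theo} has no dimension count to strengthen. It only shows that an element fixing a whole component pointwise is $\pm1$, using Proposition \ref{center} and a single $\A^1$-fiber. Genuinely new estimates are needed; the paper supplies them in Propositions \ref{fixed locus positive Coxeter} and \ref{fixed locus positive Coxeter odd}.

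For non-central images in the finite quotient, centralizer dimension is not the invariant that controls the fixed locus. You also cannot assume the order is prime to $p$, since $p\geq5$ does not give this. The paper instead uses Lusztig's model of the Coxeter variety $Y\subseteq\mathbb P(V)$. Because $x,\delta x,\ldots,\delta^{g-1}x$ form a basis, any element fixing $[x]$ is automatically semisimple, and its fixed points lie in $\mathbb P(W)$ for eigenspaces $W$ of dimension $g/d$. In the case $g=2m$, $m$ odd, $d=2$, one has $\dim\mathbb P(W)=m-1=\dim Y-1$. The eigenspace bound then gives only codimension $1$, and an extra argument is required: one shows that $\langle x,\delta x\rangle=0$ cuts out a genuine hypersurface in $\mathbb P(W)$.

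Second, the decisive case is not the one you flag, and your remedy for it is too weak. For odd $g=2m+1$ the finite quotient is $\mathrm U_g(\F_p)$. Its center contains scalars $c$ with $c^{p+1}=1$ and $c\neq\pm1$, which exist for $p\geq5$. A lift $j=zu^+$ of such a scalar fixes $Y$ pointwise, so the whole codimension must come from the affine directions. Your ``extra loss of at least one dimension'' there yields only codimension $1$.

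The paper does not use the product $Y\times\A^r$; no equivariant product structure with a linear action is available. Instead it works through the individual $\A^1$-fibrations of the reduction sequence in Theorem \ref{simple}:
\begin{itemize}
\item Lemma \ref{rank one action} shows that at a step with $s\in\{s_0,s_g\}$, $j$ acts on each fiber over a $j$-fixed point by $x\mapsto c^{\pm2}x+a$. Since $c^2\neq1$, it has exactly one fixed point there.
\item The invariant $o(w)$, the number of occurrences of $s_0,s_g$ in a reduced word, drops by $2$ exactly at such steps. Since $o(\vp^\mu v)=2m$ and $o(w_r)=0$, there are $m$ of them.
\item The fixed locus therefore loses $m\geq2$ dimensions. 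This is where $g\geq5$ is used in the odd case.
\end{itemize}
In the even case, central elements cause no trouble: they are $\pm u^+$, and torsion-freeness of $\J^1\cap P_J^+$ forces $j=\pm1$. Your sketch identifies neither the $d=2$ case nor the central unitary case, so the $a=1$ half of the proof is missing.
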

See Corollary \ref{codimension automorphism locus} for this theorem. For $g\geq3$, Oort \cite[Theorem (2.AV)]{Oort77} proved that $\operatorname{codim}_{\mathscr A_g}(\mathscr A_g\setminus V)\geq2.$ In work in preparation \cite{STinprep}, by extending the techniques developed in the present paper, we determine $\operatorname{codim}_{\mathscr S_g}(\mathscr S_g\setminus V)$ and $\operatorname{codim}_{\mathscr A_g}(\mathscr A_g\setminus V)$. 

In \S2.5, we explain how these problems are reduced to problems on $X_\mu(\tau)$.
After this reduction, for each locus in Theorem \ref{main theorem introduction}, we study an irreducible component $C$ whose stabilizer is $\J\cap P$, where $P$ is a standard parahoric subgroup of $\GSp_{2g}(\bQp)$.
The irreducible components form one $\J$-orbit in (i) and (ii), whereas they fall into $\left\lfloor\frac{g}{2}\right\rfloor$ $\J$-orbits in (iii), making the proof of (iii) more involved. We need to show that if $j\in\J\cap P$ has finite order and fixes $C$ pointwise, then $j=\pm1$. For this, we consider a $\J\cap P$-equivariant projection to a partial flag variety of the reductive quotient of $P$. This projection is constructed by Deligne--Lusztig reduction in (ii) and by the $\J$-stratification in (iii). We then apply Proposition \ref{center} concerning the action of $\J\cap P$ on Deligne--Lusztig varieties to show that $j$ is central modulo a torsion-free subgroup. Finally, an explicit calculation using root subgroups shows that $j=\pm1$.

These results suggest several interesting questions, including the classification of the Ekedahl--Oort strata satisfying the conclusions of Theorem \ref{main theorem introduction} (i) or (ii), the smallest dimension among such strata, the largest $a$-number for which the conclusion of Theorem \ref{main theorem introduction} (iii) holds. 
The methods developed in this paper will be useful in addressing these questions, including when $p=2$ or $3$. The $\J$-stratification, introduced by Chen and Viehmann \cite{CV18}, should be particularly important.
Indeed, at least for $6\le g\le 16$, only three Ekedahl--Oort strata of $a$-number $2$ have intersections with $\mathscr S_g$ of codimension $1$ in $\mathscr S_g$, whereas the $\left\lfloor\frac{g}{2}\right\rfloor$ orbits of irreducible components of the $a\geq2$ locus are represented by closures of $\J$-strata.
Since there is no known canonical way to decompose an Ekedahl--Oort stratum, the $\J$-stratification is essential to our proof of Theorem \ref{main theorem introduction} (iii).
The $\J$-stratification also appears in \cite[Remark 6.21]{KY24}.

As we finalized this paper, Karemaker and Yu \cite{KY26} posted an independent proof of Oort's conjecture for odd $p$. They construct, for every $g\ge 3$, a $(g-1)$-dimensional closed subvariety of $\mathscr S_g$ with an open dense subvariety of $a$-number $g-2$ on which the automorphism group is $\{\pm1\}$. Their result provides further motivation for these questions.

The paper is organized as follows.
In \S\ref{preliminaries}, we recall some basic facts about affine Deligne--Lusztig varieties, the Deligne--Lusztig reduction method and the $\J$-stratification. 
In \S3, we study Ekedahl--Oort strata of positive Coxeter type and prove Theorem \ref{main theorem introduction} (i) and (ii), together with analogues of (ii) for $\GL_{2g}$ and, when $g$ is even, $\operatorname{GSO}_{2g}$. In \S4, we describe the irreducible components of the $a\geq2$ locus using parahoric $\J$-strata and prove Theorem \ref{main theorem introduction} (iii) and Theorem B.

\textbf{Acknowledgments:}
The first author became interested in this problem thanks to Eva Viehmann.
The authors thank Eva Viehmann and Chia-Fu Yu for helpful comments.

The first author was supported by JSPS KAKENHI Grant number JP25K23334.
The second author was supported by JSPS KAKENHI Grant number JP25K17228.
The first author worked on this paper during a stay at UC Berkeley supported by the JSPS
Overseas Research Fellowship. The first author would like to thank the university and
his host, Sug Woo Shin, for their hospitality.

\section{Preliminaries and reformulation}
\label{preliminaries}
Keep the notation in \S\ref{introduction}.
We sometimes drop the adjective ``perfect'' for notational convenience, although we need to work with perfect schemes in most statements.

\subsection{Notation}
\label{notation}
Let $F$ be a non-archimedean local field with residue field $\F_q$ of prime characteristic $p$, and let $L$ be the completion of the maximal unramified extension of $F$.
Let $\sigma$ denote the Frobenius automorphism of $L/F$ and $\aFq/\Fq$.
We write $\cO$ (resp.\ $\cO_F$) for the valuation ring of $L$ (resp.\ $F$).
We denote by $\vp$ a uniformizer of $F$ (and $L$).

Let $G$ be a split connected reductive group over $F$ and let $T$ be a split maximal torus of it.
We write $X^* (T)$ and $X_* (T)$ for the character and cocharacter groups of $T$, respectively.
For simplicity, we assume that the Dynkin diagram of $G$ is connected.
Let $B$ be a Borel subgroup of $G$ containing $T$. 
Let $\Phi=\Phi(G,T)$ denote the set of roots of $T$ in $G$.
We denote by $\Phi_+$ (resp.\ $\Phi_-$) the set of positive (resp.\ negative) roots distinguished by $B$.
Let $\Delta$ be the set of simple roots, and let $\rho \in X^* (T)_{\Q}$ denote the half-sum of the positive roots.
Let $X_*(T)_+$ be the set of dominant cocharacters.
For $\mu,\mu'\in X_*(T)$ (resp.\ $X_*(T)_{\Q}$), we write $\mu'\pc \mu$ if $\mu-\mu'$ is a non-negative integral (resp.\ rational) linear combination of positive coroots.
For a cocharacter $\mu\in X_*(T)$, let $\vp^{\mu}$ be the image of $\vp\in \mathbb G_m(F)$ under the homomorphism $\mu\colon\mathbb G_m\rightarrow T$.

The Iwahori--Weyl group $\tW$ is defined as the quotient $N_{G(L)}T(L)/T(\cO)$.
This can be identified with the semi-direct product $W_0\ltimes X_{*}(T)$, where $W_0$ is the finite Weyl group of $G$.
We denote the projection $\tW\rightarrow W_0$ by $p$.
Let $\bS\subset W_0$ denote the subset of simple reflections, and let $\tS\subset \tW$ denote the subset of simple affine reflections.
We often identify $\Delta$ and $\bS$.
The affine Weyl group $W_a$ is the subgroup of $\tW$ generated by $\tS$.
Then we can write the Iwahori--Weyl group as a semi-direct product $\tW=W_a\rtimes \Omega$, where $\Omega\subset \tW$ is the subgroup of length $0$ elements.
Moreover, $(W_a, \tS)$ is a Coxeter system.
We denote by $\le$ the Bruhat order on $\tW$ (see \cite[Subsection 1.8]{KR00} for example).
For any $J\subseteq \tS$, let $^J\tW$ be the set of minimal length representatives for the cosets in $W_J\backslash \tW$, where $W_J$ denotes the subgroup of $\tW$ generated by $J$.
We also have a length function $\ell\colon \tW\rightarrow \Z_{\geq 0}$ given as
$$\ell(w_0\vp^{\lambda})=\sum_{\alpha\in \Phi_+, w_0\alpha\in \Phi_-}|\langle \alpha, \lambda\rangle+1|+\sum_{\alpha\in \Phi_+, w_0\alpha\in \Phi_+}|\langle \alpha, \lambda\rangle|,$$
where $w_0\in W_0$ and $\lambda\in \Y$.
We may and do embed $\tW$ into the group of affine transformations of $\Y_{\R}$ so that the action of $w=\vp^\ld u$ is given by $v\mapsto uv+\ld$.
For $\alpha\in \Phi$, we denote by $s_\alpha$ the reflection which sends $\ld\in \Y$ to $\ld-\langle \alpha, \ld\rangle\alpha^\vee$, where $\alpha^\vee$ is the corresponding coroot of $\alpha$.
Then $\bS=\{s_\alpha\mid \text{$\alpha$ is a simple root}\}$.

Let $\tP=\Phi\times \Z$ be the set of affine roots.
We view $a=(\alpha,k)\in \tP$ as an affine function such that $a(v)=-\la \alpha,v\ra+k$ for $v\in \Y_\R$.
Let $s_a=\vp^{k\alpha^\vee}s_\alpha$ denote the corresponding affine reflection.
If $w=\vp^\ld u\in \tW$, then $w(\alpha,k)=(u\alpha,k+\la u\alpha,\ld\ra)$.
Set $\tP_+=\{(\alpha,k)\in \tP\mid k\geq 1\}\sqcup \{(\alpha,0)\in \tP\mid \alpha\in \Phi_-\}$.
Then $\tP=\tP_+\sqcup \tP_-$ with $\tP_-=-\tP_+$.
For $\alpha\in \Phi$, we define $\ta=(\alpha,0)\in \tP$ if $\alpha\in \Phi_-$ and $\ta=(\alpha,1)\in \tP$ if $\alpha\in \Phi_+$.
Let $\Pi$ denote the set of minus simple roots and the highest positive root of $\Phi$.
Then $\ell(s_a)=1$ if and only if $a=\pm\ta$ for $\alpha\in \Pi$.
In particular, $\tS=\{s_{\ta}\mid \alpha\in \Pi\}$, and we sometimes identify $\Pi$ with $\tS$.
See \cite[\S1.2]{Nie22} for this description.

For $w\in W_a$, we denote by $\supp(w)\subseteq \tS$ the set of simple affine reflections occurring in every (equivalently, some) reduced expression of $w$.
Note that $\tau\in \Omega$ acts on $\tS$ by conjugation.
We define the $\sigma$-support $\supp_\sigma(w\tau)$ of $w\tau$ as the smallest $\tau$-stable subset of $\tS$ containing $\supp(w)$ (we should consider the $\tau\sigma$-action in general, but the action of $\sigma$ is trivial in our case because $G$ is split over $F$).
We call an element $w\tau\in W_a\tau$ a $\sigma$-Coxeter element if exactly one simple reflection from each $\tau$-orbit on $\supp_\sigma(w\tau)$ occurs in every (equivalently, any) reduced expression of $w$.

Set $K=G(\cO)$.
For $\alpha\in \Phi$, let $U_\alpha\subseteq G$ denote the corresponding root subgroup. We also set $$I=T(\cO)\prod_{\alpha\in \Phi_+}U_{\alpha}(\vp\cO)\prod_{\beta\in \Phi_-}U_{\beta}(\cO)\subseteq K,$$
which is called the standard Iwahori subgroup associated with the triple $T\subset B\subset G$.
For $J\subsetneq\tS$, let $P_J\supseteq I$ be the standard parahoric subgroup attached to $J$.
By abuse of notation, we also use $P_J$ for the corresponding parahoric group scheme over $\cO$.

In the case $G=\GL_n$, we will use the following description.
Let $T$ be the torus of diagonal matrices and let $B$ be the Borel subgroup of upper triangular matrices.
Let $\chi_{ij}\in X^*(T)$ be given by $\chi_{ij}(\diag(t_1,\ldots,t_n))=t_it_j^{-1}$. Then $\Phi=\{\chi_{ij}\mid i\neq j\}$, $\Phi_+=\{\chi_{ij}\mid i<j\}$, $\Phi_-=\{\chi_{ij}\mid i>j\}$, and $\Pi=\{\chi_{i+1,i}\mid1\leq i<n\}\sqcup\{\chi_{1,n}\}$.
Set $s_i=(i\ i+1)$ for $1\leq i<n$ and $s_0=\vp^{\chi_{1,n}^\vee}(1\ n)$. Then $\bS=\{s_1,\ldots,s_{n-1}\}$ and $\tS=\bS\sqcup\{s_0\}$.
Through the natural isomorphism $X_*(T)\cong\Z^n$, ${X_*(T)}_+$ is identified with the set $\{(m_1,\ldots,m_n)\in\Z^n\mid m_1\geq\cdots\geq m_n\}$.
The finite Weyl group is the symmetric group of degree $n$.
The Iwahori subgroup $I\subset K$ is the inverse image of the lower triangular matrices under the projection $K\rightarrow G(\aFq)$ induced by $\vp\mapsto0$.

Let $\Psi=(\Psi_{ij})\in\GL_{2g}(F)$ be the matrix whose nonzero entries are $\Psi_{i,2g+1-i}=1$ and $\Psi_{2g+1-i,i}=-1$ for $1\leq i\leq g$.
For every $F$-algebra $R$, set
\begin{align*}
\GSp_{2g}(R)&=\{h\in\GL_{2g}(R)\mid {}^th\Psi h=c\Psi\text{ for some }c\in R^\times\},\\
\Sp_{2g}(R)&=\{h\in\GL_{2g}(R)\mid {}^th\Psi h=\Psi\}.
\end{align*}
For $G=\GSp_{2g}$ or $\Sp_{2g}$, we take $T$ and $B$ to be the intersections with the corresponding subgroups of $\GL_{2g}$.
For $G=\GSp_{2g}$, through the natural inclusion $X_*(T)\subseteq\Z^{2g}$, the cocharacter group can be identified with the set $\{(m_1,\ldots,m_{2g})\in\Z^{2g}\mid m_1+m_{2g}=m_2+m_{2g-1}=\cdots=m_g+m_{g+1}\}$, and ${X_*(T)}_+$ with its subset defined by $m_1\geq\cdots\geq m_{2g}$.
In either case, the standard Iwahori subgroup is the intersection of the standard Iwahori subgroup of $\GL_{2g}$ as above with $G$.
For $G=\Sp_{2g}$, the cocharacter group is identified with the subset of $\Z^{2g}$ defined by $m_i+m_{2g+1-i}=0$ for $1\leq i\leq g$.

Assume that $\ch(F)\neq2$.
Let $\Theta=(\Theta_{ij})\in\GL_{2g}(F)$ be the matrix whose nonzero entries are $\Theta_{i,2g+1-i}=1$ for $1\leq i\leq2g$.
For every $F$-algebra $R$, set
\begin{align*}
\operatorname{GSO}_{2g}(R)&=\left\{h\in\GL_{2g}(R)\,\middle|\,{}^th\Theta h=c\Theta\text{ for some }c\in R^\times,\ \det(h)=c^g\right\},\\
\operatorname{SO}_{2g}(R)&=\left\{h\in\GL_{2g}(R)\,\middle|\,{}^th\Theta h=\Theta,\ \det(h)=1\right\}.
\end{align*}
For $G=\operatorname{GSO}_{2g}$ or $\operatorname{SO}_{2g}$, we take $T$ and $B$ to be the intersections with the corresponding subgroups of $\GL_{2g}$.
For $G=\operatorname{GSO}_{2g}$, through the natural inclusion $X_*(T)\subseteq\Z^{2g}$, the cocharacter group can be identified with the set $\{(m_1,\ldots,m_{2g})\in\Z^{2g}\mid m_1+m_{2g}=m_2+m_{2g-1}=\cdots=m_g+m_{g+1}\}$, and ${X_*(T)}_+$ with its subset defined by $m_1\geq\cdots\geq m_g$ and $m_{g-1}+m_g\geq m_1+m_{2g}$.
In either case, the standard Iwahori subgroup is the intersection of the standard Iwahori subgroup of $\GL_{2g}$ as above with $G$.
For $G=\operatorname{SO}_{2g}$, the cocharacter group is identified with the subset of $\Z^{2g}$ defined by $m_i+m_{2g+1-i}=0$ for $1\leq i\leq g$.

\subsection{Affine Deligne--Lusztig Varieties}
\label{ADLV}
Since the underlying topological spaces of the affine flag variety and the affine Grassmannian are Jacobson, any locally closed reduced subscheme is uniquely determined by its $\aFq$-valued points.
Hence we usually identify these spaces and their locally closed reduced subschemes with their sets of $\aFq$-valued points.

Let $B(G)$ denote the set of $\sigma$-conjugacy classes of $G(L)$. 
Thanks to Kottwitz \cite{Kottwitz85}, a $\sigma$-conjugacy class $[b]\in B(G)$ is uniquely determined by two invariants: the Kottwitz point $\kappa(b)\in \pi_1(G)$ and the Newton point $\nu_b\in X_*(T)_{\Q,+}$.
Set $B(G,\mu)=\{[b]\in B(G)\mid \kappa(b)=\kappa(\vp^\mu), \nu_b\pc \mu\}$.

For $w\in \tW$ and $b\in G(L)$, the affine Deligne--Lusztig variety $X_w(b)$ in the affine flag variety $G(L)/I$ is defined as
$$X_w(b)=\{xI\in G(L)/I\mid x^{-1}b\sigma(x)\in IwI\}.$$
For $\mu\in \Y_+$ and $b\in G(L)$, the affine Deligne--Lusztig variety $X_{\mu}(b)$ in the affine Grassmannian $G(L)/K$ is defined as
$$X_{\mu}(b)=\{xK\in G(L)/K\mid x^{-1}b\sigma(x)\in K\vp^{\mu}K\}.$$
Then $X_\mu(b)\neq \emptyset$ if and only if $[b]\in B(G,\mu)$ (see \cite{Gashi10}).
If $\ch(F)>0$, both the affine flag variety and the affine Grassmannian are ind-schemes; if $\ch(F)=0$, they are ind-perfect schemes (see \cite{PR08}, \cite{Zhu17} and \cite{BS17}).
Then the affine Deligne--Lusztig varieties are locally closed subvarieties of them equipped with the reduced scheme structure.
In mixed characteristic, the affine Deligne--Lusztig varieties are locally perfectly of finite type \cite[Lemma 1.1]{HV18}.
For an $F$-algebra $R$, define
$$J_b(R)=\{j\in G(R\otimes_F L)\mid j^{-1}b\sigma(j)=b\},$$
where $\sigma$ acts trivially on $R$.
This defines an algebraic group $J_b$ over $F$.
We set $\J_b=J_b(F)$.
The affine Deligne--Lusztig varieties carry a natural action of $\J_b$ by left multiplication.
Since $b$ is usually fixed in the discussion, we sometimes omit it from the notation.

\begin{lemm}
\label{finite quotient}
Let $Z$ be a quasi-compact subscheme of $X_\mu(b)$, and assume that its stabilizer in $\J_b$ is $\J_b\cap P$ for a parahoric subgroup $P\subset G(L)$. Then the induced action of $\J_b\cap P$ on $Z$ factors through a finite quotient of $\J_b\cap P$.
\end{lemm}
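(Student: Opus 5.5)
The approach is to find a normal open subgroup of finite index in $\J_b\cap P$ that acts trivially on $Z$; the action then factors through the finite quotient. The natural candidate is a deep congruence subgroup of $P$.

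First, since $Z$ is quasi-compact and the affine Grassmannian is an ind-(perfect) scheme whose closed Schubert varieties $\overline{K\vp^{\lambda}K}/K$ exhaust it, the image of $Z$ lies in some finite union of Schubert cells. In particular, there is an integer $N$ such that every point $xK\in Z$ satisfies
\[
\vp^{N}\cO^{n}\subseteq x\cO^{n}\subseteq \vp^{-N}\cO^{n}
\]
after choosing a faithful representation $G\hookrightarrow\GL_n$. More intrinsically, $Z\subseteq \bigcup_{\lambda\in S}K\vp^\lambda K/K$ for a finite set $S$ of dominant coweights.

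Next, set $P_m=\ker\bigl(P(\cO)\to P(\cO/\vp^m)\bigr)$, the $m$-th congruence subgroup of the parahoric group scheme $P$. I would also use the principal congruence subgroup $K_m$ of $K$, whose action on $K\vp^\lambda K/K$ is trivial once $m>\langle\lambda,\text{all roots}\rangle$ bound. The point is that for $y=k\vp^\lambda$ with $k\in K$, we have $y^{-1}K_my=\vp^{-\lambda}(k^{-1}K_mk)\vp^{\lambda}=\vp^{-\lambda}K_m\vp^\lambda\subseteq K$ when $m\ge\max|\langle\alpha,\lambda\rangle|$, so $K_m$ fixes $yK$. Since $P$ and $K$ are commensurable open compact subgroups, $P_{m'}\subseteq K_m$ for $m'$ large. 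Then $\J_b\cap P_{m'}$ fixes $Z$ pointwise.

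Finally, $\J_b\cap P_{m'}$ is normal in $\J_b\cap P$, because $P_{m'}$ is normal in $P$. It has finite index, since $(\J_b\cap P)/(\J_b\cap P_{m'})$ injects into the finite group $P(\cO)/P_{m'}$. Note that $P(\cO/\vp^{m'})$ is infinite over $\aFq$, but $\J_b\cap P$ is a compact open subgroup of the $p$-adic group $\J_b$, and its image is the $\Fq$-points of a finite-type group, which are finite. More precisely, $\J_b\cap P$ is profinite, and $\J_b\cap P_{m'}$ is an open subgroup of it, hence has finite index. So the action of $\J_b\cap P$ on $Z$ factors through the finite quotient $(\J_b\cap P)/(\J_b\cap P_{m'})$.

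The main obstacle is justifying that quasi-compactness forces $Z$ into finitely many Schubert cells, uniformly in the reduced or perfect setting. I would invoke the standard fact that the affine Grassmannian is the filtered colimit of the quasi-compact Schubert varieties, so any quasi-compact subscheme lies in one of them. Openness of $\J_b\cap P_{m'}$ in $\J_b$ also uses that $\J_b$ carries the subspace topology from $G(L)$.
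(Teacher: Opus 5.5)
Your proof is correct and is essentially the paper's argument: $Z$ lies in finitely many Schubert varieties, a deep principal congruence subgroup of $K$ acts trivially on them, and the resulting open subgroup of the compact group $\J_b\cap P$ has finite index (the paper notes that the kernel of the action, automatically normal, contains $\J_b\cap P\cap K_h$, so your detour through $P_{m'}$ is unnecessary). Drop the claim that $P(\cO)/P_{m'}$ is finite, which is false since the residue field of $\cO$ is $\aFq$; the compactness argument you substitute for it is the right one.
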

\begin{proof}
Since $Z$ is quasi-compact, it is contained in $\bigcup_{i=1}^r\overline{K\vp^{\lambda_i}K/K}$ for some dominant cocharacters $\lambda_1,\ldots,\lambda_r$.
For sufficiently large $h$, the action of $K=G(\cO)$ on this union factors through $G(\cO/\vp^h\cO)$ (cf.\ \cite[\S2.1.1]{Zhu17}).
Hence the kernel of the $(\J_b\cap P)$-action on $Z$ contains an open subgroup of $\J_b\cap P$.
Since $\J_b\cap P$ is compact, this kernel has finite index.
\end{proof}



The admissible subset of $\tW$ associated with $\mu$ is defined as
$$\Adm(\mu)=\{w\in \tW\mid w\le \vp^{w_0\mu}\ \text{for some}\ w_0\in W_0\}.$$
Set $\SAdm(\mu)=\Adm(\mu)\cap \SW$.
Assume that $\mu$ is minuscule.
Let $\pi\colon G(L)/I\rightarrow G(L)/K$ denote the projection.
Then, by \cite[Theorem 3.2.1]{GH15}, we have
$$X_{\mu}(b)=\bigsqcup_{w\in\SAdm(\mu)}\pi(X_w(b)).$$
This is the {\it Ekedahl--Oort stratification} (cf.\ \cite[\S7.1]{GHN19}).

For a standard parahoric subgroup $P\supseteq I$, let $\overline P$ be the reductive quotient of the special fiber of $P$.
Let $P^+\subseteq I$ denote the pro-unipotent radical of $P$, which coincides with the kernel of the natural surjective map $P\rightarrow \overline P$.
Its restriction to $T(\cO)$ is the usual reduction map $T(\cO)\rightarrow T(\aFq)$.
The quotient $\overline B\coloneqq I/P^+$ is a Borel subgroup of $\overline P$. Thus $P/I$ is naturally identified with the flag variety $\overline P/\overline B$.
Assume that $J\subsetneq\tS$ is $\tau$-stable.
Then $\Ad(\tau)\circ \sigma$ stabilizes $P_J$, $P_J^+$, and $I$, and induces a Frobenius endomorphism of $\overline P_J$ preserving $\overline B_J\coloneqq I/P_J^+$. 
The natural surjective map $P_J\rightarrow\overline P_J$ induces an isomorphism $(\J_\tau\cap P_J)/(\J_\tau\cap P_J^+)\xrightarrow{\sim}\overline P_J^{\Ad(\tau)\circ\sigma}$.


Any two lifts of $w\in \tW$ in $N_{G(L)}T(L)$ are $T(\cO)$-$\sigma$-conjugate (cf.\  \cite[Lemma 2.5]{Gortz19}).
Thus, we fix a lift of $\tau\in \Omega$ in $N_{G(L)}T(L)$ and denote it again by $\tau$.
The following proposition is the key to an explicit description of the affine Deligne--Lusztig varieties.
\begin{prop}
\label{spherical}
Let $\tau\in \Omega$.
Let $w\in W_a\tau$ with $J\coloneqq\supp_\sigma(w)\subsetneq\tS$.
Then $$X_w(\tau)=\bigsqcup_{j\in \J_\tau/\J_\tau\cap P_J} jY(w),$$
where $Y(w)=\{xI\in P_J/I\mid x^{-1}\tau \sigma(x)\in IwI\}$ is a {\it classical Deligne--Lusztig variety} in the flag variety $P_J/I$.
In particular, each $jY(w)$ is an irreducible component.
\end{prop}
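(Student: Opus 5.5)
The argument has two parts. First, every point of $X_w(\tau)$ lies in some translate $jP_J/I$ with $j\in\J_\tau$. Second, on each such translate the defining condition becomes a classical Deligne--Lusztig condition. Throughout, $J=\supp_\sigma(w)$ is $\tau$-stable and proper, so $P_J$ is a standard parahoric with $\Ad(\tau)\circ\sigma(P_J)=P_J$.

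\textbf{Step 1 (key step).} Let $xI\in X_w(\tau)$ and set $g=x^{-1}\tau\sigma(x)\in IwI\subseteq P_J\tau$. The last inclusion holds because $w\in W_J\tau$ and $I\subseteq P_J$, while $\tau$ normalizes $P_J$ since $J$ is $\tau$-stable. Write $g=h\tau$ with $h\in P_J$. By Lang's theorem for the connected group scheme $P_J$ with the Frobenius $\Ad(\tau)\circ\sigma$, there is $y\in P_J$ with
$$h=y\,\tau\sigma(y)^{-1}\tau^{-1}, \qquad\text{equivalently}\qquad h\tau=y\tau\sigma(y)^{-1}.$$
Here Lang's theorem is used in its pro-algebraic form: it is applied successively on $\overline P_J$ and on the graded pieces of the filtration of $P_J^+$, with a limit argument. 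Then $(xy)^{-1}\tau\sigma(xy)=y^{-1}h\tau\sigma(y)=\tau$, so $j\coloneqq xy\in\J_\tau$. Hence $x=jy^{-1}$ and $xI\in jP_J/I$.

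Translating by $j^{-1}$ preserves the defining condition, because $j^{-1}\tau\sigma(j)=\tau$. So $j^{-1}xI\in P_J/I$ satisfies $(j^{-1}x)^{-1}\tau\sigma(j^{-1}x)\in IwI$, that is, $j^{-1}xI\in Y(w)$. This gives $X_w(\tau)=\bigcup_j jY(w)$.

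\textbf{Step 2 (disjointness).} Suppose $jY(w)\cap j'Y(w)\neq\emptyset$. Then $j^{-1}j'\in P_J$, so $j$ and $j'$ give the same class in $\J_\tau/(\J_\tau\cap P_J)$. Conversely, $Y(w)$ is stable under $\J_\tau\cap P_J$, so the translates depend only on that class. This gives the stated disjoint union.

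\textbf{Step 3 (irreducible components).} Identify $P_J/I$ with $\overline P_J/\overline B_J$. Under this identification $Y(w)$ becomes the classical Deligne--Lusztig variety for the Frobenius $\Ad(\tau)\circ\sigma$ of $\overline P_J$ and the element $w\tau^{-1}\in W_J$. By Bonnafé--Rouquier (or Lusztig), it is irreducible because $\supp_\sigma(w)=J$, i.e.\ the support of $w\tau^{-1}$ meets every $\tau$-orbit of $J$. Each $P_J/I$ is closed in $G(L)/I$, and the set of translates $jP_J/I$ is locally finite. So the pieces $jY(w)$ are open and closed in $X_w(\tau)$, hence they are exactly its irreducible components.

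The main obstacle is Step 1, namely justifying Lang's theorem for the infinite-dimensional group $P_J$ with the twisted Frobenius. Step 3 is also delicate, since irreducibility of $Y(w)$ rests on the support criterion.
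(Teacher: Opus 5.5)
Your argument is correct and is essentially the proof the paper defers to in \cite[Proposition 2.2.1]{GH15}. Lang's theorem for the parahoric $P_J$ with Frobenius $\Ad(\tau)\circ\sigma$ puts every point of $X_w(\tau)$ into some $jP_J/I$ with $j\in\J_\tau$; there the defining condition becomes the classical Deligne--Lusztig condition for $w\tau^{-1}\in W_J$; and irreducibility of each piece is the support criterion of \cite[Theorem 2]{BR06}. Your extra step (each $jP_J/I$ is closed and the translates are locally finite, so each $jY(w)$ is open and closed) is the right way to pass from irreducibility of $Y(w)$ to the claim that each $jY(w)$ is an irreducible component.
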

\begin{proof}
See \cite[Proposition 2.2.1]{GH15}. See also \cite[Theorem 2]{BR06} for irreducibility.
\end{proof}

Let $w\in \SW\cap W_a\tau$ with $J\coloneqq\supp_\sigma(w)\subsetneq\tS$.
As explained in \cite[\S2.4]{GHN24},
$$\pi(Y(w))=\{xP_{J\cap \bS}\in P_J/P_{J\cap \bS}\mid x^{-1}\tau\sigma(x)\in P_{J\cap \bS}\cdot_\sigma IwI\},$$
where $\cdot_\sigma$ denotes the action by $\sigma$-conjugation.
This is called a {\it fine Deligne--Lusztig variety}.
Moreover, we have
$$\pi(X_w(\tau))=\bigsqcup_{j\in \J_\tau/(\J_\tau\cap P_{J\cup\bS_w})}j\pi(Y(w)),$$
where $\bS_w\coloneqq\max\{K\subseteq\bS\mid \Ad(w)(K)=K\}$.
In particular, if $J$ is a maximal proper $\tau$-stable subset of $\tS$, then $J\cup\bS_w=J$, i.e., $\bS_w\subseteq J$.

\begin{prop}
\label{center}
Let $w\in W_a\tau$ with $J\coloneqq\supp_\sigma(w)\subsetneq\tS$, and let $h\in(\overline P_J)^{\Ad(\tau)\circ\sigma}$. If $h$ fixes $Y(w)$ pointwise, then $h$ lies in the center $Z(\overline P_J)$ of $\overline P_J$. If $w\in\SW$ and $h$ fixes $\pi(Y(w))$ pointwise, then $h$ also lies in $Z(\overline P_J)$.
\end{prop}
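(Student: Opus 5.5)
We reduce to a classical statement about Deligne--Lusztig varieties for the finite reductive group $H\coloneqq\overline P_J$ with Frobenius $F\coloneqq\Ad(\tau)\circ\sigma$. Under the identification $P_J/I\cong \overline P_J/\overline B_J$, the variety $Y(w)$ becomes
$$Y(w)\cong\{g\overline B_J\mid g^{-1}F(g)\in \overline B_J\dot w\overline B_J\},$$
where $w\tau^{-1}\in W_J$ is viewed as an element of the Weyl group of $H$. This is an ordinary Deligne--Lusztig variety $X(w')$ for the element $w'\in W_J$ corresponding to $w$, and $\supp_\sigma(w)=J$ says precisely that $w'$ is not contained in any proper $F$-stable standard parabolic subgroup of $W_J$. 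The action of $h\in H^F$ on $Y(w)$ is left multiplication.

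Suppose $h\in H^F$ fixes $X(w')$ pointwise. Then for every $x=g\overline B_J\in X(w')$ we have $h\in g\overline B_Jg^{-1}$. Hence $h$ lies in
$$N\coloneqq\bigcap_{g\overline B_J\in X(w')}g\overline B_Jg^{-1}.$$
This is a closed subgroup of $H$. It is normalized by $H^F$, since $X(w')$ is $H^F$-stable. It is also normalized by $F$, because $F(X(w'))\subseteq X(w')$ and $F$ is bijective on points, so $F$ permutes the conjugates.

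The key step is to show that $N\subseteq Z(H)$. My plan is to use irreducibility of $X(w')$, given by Proposition \ref{spherical} (Bonnafé--Rouquier). The group $N^\circ$ is a connected solvable group contained in a Borel subgroup. Its unipotent radical is normalized by $H^F$ and by $F$, and it is contained in the unipotent radicals of all the Borels in $X(w')$. For the $H^F$-orbit structure I then take $\overline{X(w')}$, which contains $X(1)=H^F$-translates of $\overline B_J/\overline B_J$ together with, through the Bruhat closure, the full $F$-stable parabolic of type $J$. More precisely, the closure of $X(w')$ in $H/\overline B_J$ is $\bigsqcup_{v\le w'}X(v)$. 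Since $N$ is closed, $h$ fixes the closure pointwise. It therefore fixes the set $X(1)=\{g\overline B_J\mid g^{-1}F(g)\in\overline B_J\}=H^F\overline B_J/\overline B_J$, and also each $X(s)$ for each simple reflection $s$ in the support.

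Fixing $X(1)$ pointwise gives $h\in\bigcap_{\gamma\in H^F}\gamma\overline B_J\gamma^{-1}$. This is a normal subgroup of $H^F$ contained in $\overline B_J^F$. I then argue that $h$ is semisimple modulo this: the unipotent part lies in $\bigcap_\gamma \gamma U\gamma^{-1}=U\cap{}^{w_0}U$-type intersections, which are trivial. So $h\in\bigcap_\gamma \gamma T\gamma^{-1}$ up to the center. Using support of all simple reflections, I show that $\alpha(h)=1$ for every simple root $\alpha$ of $H$ via the projective lines in $X(s)$: on a fiber $P_s/\overline B_J\cong\mathbb P^1$, the element $h$ acts through $\alpha(h)$ on infinitely many points, forcing $\alpha(h)=1$. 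Hence $h\in Z(H)$.

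For the second assertion, $\pi(Y(w))$ is the image of $Y(w)$ under $H/\overline B_J\to H/\overline Q$, where $\overline Q$ is the parabolic of type $J\cap\bS$. The same closure argument applies with $\overline B_J$ replaced by $\overline Q$. One first reduces the unipotent radical intersection as before, and then uses that $w\in\SW$ guarantees the relevant one-dimensional fibers (for simple reflections outside $J\cap\bS$) map injectively, giving $\alpha(h)=1$ for those $\alpha$. For $\alpha\in J\cap\bS$, one uses conjugation by the $F$-action, since $\supp_\sigma(w)=J$ and $F$-orbits mix the types.

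The main obstacle I expect is the last step for $\pi(Y(w))$: verifying that every simple root is detected when fibers of type $J\cap\bS$ are collapsed. Here the hypothesis $w\in\SW$ must be used carefully, via the description of fine Deligne--Lusztig varieties in \cite{GHN24}.
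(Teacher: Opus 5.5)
\textbf{The first assertion: a good start, but a faulty last step.} Your starting point is correct. Since $\overline B_J$ is $F$-stable and $\overline{X(w')}\supseteq X(1)=H^F\overline B_J/\overline B_J$, the element $h$ lies in the normal subgroup $\bigcap_{\gamma\in H^F}\gamma\overline B_J\gamma^{-1}$ of $H^F$.

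The step you then use to get $\alpha(h)=1$ fails precisely in the cases that matter here, where $F=\Ad(\tau)\circ\sigma$ moves simple reflections (e.g.\ $\overline P_J^1\cong\mathrm U_g$ or $\operatorname{Res}_{\F_{p^2}/\F_p}\Sp_g$). If $F(s)\neq s$, then $F(X(w'))=X(F(w'))$, not $X(w')$. Moreover, $X(s)$ meets every fiber $C$ of $H/\overline B_J\to H/P_s$ in at most one point: for $x\in C\cap X(s)$, $F(x)$ must be the unique point of $C\cap F(C)$, two lines of different types. So there is no $\mathbb P^1$ on which $h$ acts through $\alpha(h)$ while fixing infinitely many points.

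The detour is unnecessary anyway. Take $\dot w_0\in N(T)^F$; then $h\in\overline B_J\cap\dot w_0\overline B_J\dot w_0^{-1}=T$. For $v\in(U^-)^F$ you get $h^{-1}v^{-1}hv\in\overline B_J\cap U^-=1$. By Lang's theorem $(U^-)^F$ surjects onto $(U^-/[U^-,U^-])^F$, which forces $\alpha(h)=1$ for every simple root $\alpha$, so $h\in Z(H)$. With this repair you have a self-contained proof of the first assertion, which the paper simply quotes from \cite{GS26}. It is the same mechanism the paper uses for the second assertion, with $Q=\overline B_J$.

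\textbf{The second assertion: a genuine gap.} Your observation that $\overline{\pi(Y(w))}\supseteq H^F\overline Q/\overline Q$ is right (it follows from properness of $H/\overline B_J\to H/\overline Q$), and it is also the paper's starting point. The root-by-root continuation, however, cannot work. The map $\pi$ collapses exactly the directions in $J\cap\bS$, and $F$-conjugation only moves $s_0$ within its $F$-orbit.

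For example, take $g=4$, $w=s_0s_1\tau$ and $J=\tS\setminus\{s_2\}$. The $F$-orbits in $J$ are $\{s_0,s_4\}$ and $\{s_1,s_3\}$, with $s_1,s_3\in J\cap\bS$. So at best you obtain triviality on the long simple roots, which does not force centrality: already $\diag(1,-1,-1,1)\in\Sp_4$ kills the long simple root but not the short one.

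What is needed instead is the following.
\begin{enumerate}[(i)]
\item $\overline Q$ is not $F$-stable. Fixing $H^F\overline Q/\overline Q$ pointwise therefore gives $h\in\bigcap_{\gamma\in H^F}\gamma Q_0^F\gamma^{-1}$, where $Q_0=\bigcap_nF^n(\overline Q)$ is the standard parabolic of type $\bigcap_nF^n(J\cap\bS)$ and $H^F\cap\overline Q=Q_0^F$.
\item $Q_0$ must be proper on every simple factor of $H$. This is where $w\in\SW$ really enters: it forces $\supp(w\tau^{-1})$ to be connected and to contain $s_0$. Hence $F$ permutes the connected components of $J$ transitively, so an $F$-stable subset of $J\cap\bS\subsetneq J$ contains no component. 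This rules out a whole simple factor acting trivially on $H^F/Q_0^F$.
\item After reducing to an absolutely simple adjoint factor, a normal subgroup of the finite group of Lie type contained in a proper rational parabolic is central \cite[Lemma 24.12 \& Corollary 24.13]{MT11}.
\end{enumerate}
Step (iii) is the mechanism that reaches the roots in $J\cap\bS$; your proposal has nothing in its place. Steps (i)--(iii) together are exactly the paper's proof.
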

\begin{proof}
The first assertion is \cite[Proposition 1.11]{GS26}. We prove the second assertion in a similar way.
Set $H=\overline P_J$, $Q=P_{J\cap\bS}/P_J^+\subseteq H$ and $\delta =\Ad(\tau)\circ \sigma$. We may assume that $J\neq\emptyset$. Since $w\in\SW$, we have $J\cap\bS\subsetneq J$.
Hence $Q$ is a proper parabolic subgroup of $H$.
Again by $w\in\SW$ and the connectedness of the Dynkin diagram of $G$, $\delta$ acts transitively on the set of connected components of $J$.

Passing from the reductive group $H$ over $\F_q$ to its adjoint quotient $H/Z(H)$ does not change the flag variety, the Weyl group $W_J$, or the Deligne--Lusztig varieties. Thus, we may assume that $H$ is $\Fq$-simple and adjoint.
By the closure relation of fine Deligne--Lusztig varieties \cite[Theorem 3.1]{He09}, the closure of $\pi(Y(w))$ in $H/Q$ contains $H^\delta/H^\delta\cap Q$.
Thus it suffices to show that if $h\in H^\delta$ fixes $H^\delta/H^\delta\cap Q$ pointwise, then $h=1$.
Set $Q_0\coloneqq\bigcap_{n\in\Z}\delta^n(Q)$.
Then $Q_0$ is a proper $\delta$-stable parabolic subgroup of $H$, and $H^\delta\cap Q=Q_0^\delta$.
Let $H_1$ be the simple factor corresponding to a connected component of the Dynkin diagram of $H$, and let $d$ be the size of its $\delta$-orbit.
Then $H^\delta\cong H_1^{\delta^d}$ and $Q_0^\delta$ corresponds to $Q_1^{\delta^d}$, where $Q_1\coloneqq Q_0\cap H_1$ is a proper $\delta^d$-stable parabolic subgroup of $H_1$. Hence $H^\delta/Q_0^\delta\cong H_1^{\delta^d}/Q_1^{\delta^d}.$

Therefore, it suffices to show that if $H$ is an absolutely simple adjoint group, $\delta$ is a Frobenius endomorphism of $H$, and $Q\subsetneq H$ is a proper $\delta$-stable parabolic subgroup, then $H^\delta$ acts faithfully on $H^\delta/Q^\delta$. Set $N=\bigcap_{h'\in H^\delta}h'Q^\delta h'^{-1}$. Then $N\triangleleft H^\delta$ and $N\subseteq Q^\delta$. By \cite[Lemma 24.12 \& Corollary 24.13]{MT11}, we have $N\subseteq Z(H^\delta)=Z(H)^\delta=1$, and hence $N=1$.
This finishes the proof.
\end{proof}

\subsection{Deligne--Lusztig Reduction Method}
\label{DL method}
For any $h,h'\in G(L)$, let $\inv(h,h')$ denote the relative position, i.e., the unique element in $\tW$ such that $h^{-1}h'\in I\inv(h,h')I$.
The following Deligne--Lusztig reduction method was established in \cite[Corollary 2.5.3]{GH10}.
\begin{prop}
\label{DL method prop}
Let $w\in \tW$ and let $s\in \tS$ be a simple affine reflection.
If $\ch(F)>0$, then the following two statements hold for any $b\in G(L)$.
\begin{enumerate}[(i)]
\item If $\ell(sws)=\ell(w)$, then there exists a $\J_b$-equivariant universal homeomorphism $X_w(b)\rightarrow X_{sws}(b)$.
\item If $\ell(sws)=\ell(w)-2$, then there exists a decomposition $X_w(b)=X_1\sqcup X_2$ such that
\begin{itemize}
\item $X_1$ is open and there exists a $\J_b$-equivariant morphism $X_1\rightarrow X_{sw}(b)$, which is  the composition of a Zariski-locally trivial $\G_m$-bundle and a universal homeomorphism. 
\item $X_2$ is closed and there exists a $\J_b$-equivariant morphism $X_2\rightarrow X_{sws}(b)$, which is the composition of a Zariski-locally trivial $\A^1$-bundle and a universal homeomorphism. 
\end{itemize}
\end{enumerate}
If $\ch(F)=0$, then the above statements still hold by replacing $\A^1$ and $\G_m$ by $\A^{1,\pfn}$ and $\G_m^{\pfn}$ respectively.
\end{prop}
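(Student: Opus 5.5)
The plan is to reduce to the case of a single simple reflection acting on the Iwahori level, following the classical Deligne--Lusztig argument of \cite{GH10}. For $xI\in X_w(b)$, set $g=x^{-1}b\sigma(x)\in IwI$. Since $s$ is a simple affine reflection, each $xI$ has exactly a $\mathbb P^1$ of neighbours $x'I$ with $\inv(x,x')\in\{1,s\}$. The map I want sends $xI$ to the point $x'I=xsI$-type neighbour determined by $g$: choose $x'$ with $\inv(x,x')=s$ and compute $\inv(x',b\sigma(x'))$ using the multiplication rules $IsI\cdot IwI$ and $IwI\cdot IsI$ in the Iwahori--Hecke monoid. All the maps will be defined by such relative-position conditions, which are intrinsic; hence they commute with left multiplication by $\J_b$, and $\J_b$-equivariance is automatic.

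\textbf{Case (i): $\ell(sws)=\ell(w)$.} Then either $\ell(sw)<\ell(w)$ or $\ell(ws)<\ell(w)$; by symmetry take $sw<w$, so $ws>w$ and $sws$ has the same length. Write $IwI=IsI\cdot IswI$. Given $g\in IwI$, there is a unique $x'I$ with $\inv(x,x')=s$ and $\inv(x',b\sigma(x))=sw$. Then $\inv(x',b\sigma(x'))=sw\cdot s$, because $\ell(sws)=\ell(sw)+1$ and $\inv(b\sigma(x),b\sigma(x'))=\sigma(s)=s$ ($G$ split). This gives $X_w(b)\to X_{sws}(b)$. The symmetric construction gives an inverse on points, so the map is bijective. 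It is a morphism of (perfect) ind-schemes because locally it is the projection of the $\mathbb P^1$-bundle $G(L)/I\times_{G(L)/P_s}G(L)/I$ restricted to a locus where it becomes an isomorphism onto its image. Bijectivity together with this local structure yields a universal homeomorphism; over a perfect base, the Frobenius twist is what prevents it from being an isomorphism.

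\textbf{Case (ii): $\ell(sws)=\ell(w)-2$.} Now $sw<w$ and $ws<w$. Again pick the unique $x'$ with $\inv(x,x')=s$ and $\inv(x',b\sigma(x))=sw$. Then $\inv(x',b\sigma(x'))\in IswI\cdot IsI=IswI\sqcup IswsI$, since $sw\cdot s<sw$. Let $X_1$ and $X_2$ be the loci where the value is $sw$ and $sws$ respectively. $X_2$ is closed, because it is the smaller Bruhat cell in the closure. The fibre of $X_1\to X_{sw}(b)$ over $x'I$ consists of those $xI$ in the $\mathbb P^1$ through $x'I$ with $\inv(x',x)=s$ and $\inv(x,b\sigma(x))=w$. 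One removes the point $x'$ and the point cut out by the condition on $b\sigma(x')$, giving $\G_m$. For $X_2$, one removes only $x'$, giving $\A^1$. Local triviality in the Zariski topology comes from trivialising the $\mathbb P^1$-bundle $G(L)/I\to G(L)/P_s$ over open cells via the root subgroup $U_{-\alpha_s}$. The Frobenius $\sigma$ enters the fibre condition through a relation $t$ versus $\sigma(t)$-type equation, which is why only a composition with a universal homeomorphism is obtained.

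The hard step is making the fibre descriptions and local triviality precise scheme-theoretically rather than on points. The idea is to write the fibre explicitly in root-subgroup coordinates $x=x'u_{-\alpha}(t)\dot s$ and check that the defining condition is a single nonvanishing, resp.\ vanishing, condition on $t$. In mixed characteristic the same computation on Witt-vector (perfect) affine flag varieties gives $\A^{1,\pfn}$ and $\G_m^{\pfn}$. In practice I would cite \cite[Corollary 2.5.3]{GH10} and its perfect analogue for the mixed characteristic case, rather than redo this.
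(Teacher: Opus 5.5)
Your proposal is correct and follows the paper's route. The paper likewise cites \cite[Corollary 2.5.3]{GH10}, and its sketch is your construction: send $xI$ to the unique $x_1I$ with $\inv(x,x_1)=s$ and $\inv(x_1,b\sigma(x))=sw$ (after exchanging $w$ and $sws$ if necessary), and split $X_w(b)$ according to whether $\inv(x_1,b\sigma(x_1))$ equals $sw$ or $sws$. One harmless slip: in case (i), $sw<w$ does not force $ws>w$ when $sws=w$, but you only use $\ell(sws)=\ell(sw)+1$, which still holds.
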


We sketch the construction of maps in the proposition.
Let $xI\in X_w(b)$.
If $\ell(sw)<\ell(w)$ (we can reduce to this case by exchanging $w$ and $sws$), then let $x_1I$ denote the unique element in $G(L)/I$ such that $\inv(x,x_1)=s$ and $\inv(x_1,b\sigma(x))=sw$.
In the case of (ii), the set $X_1$ (resp.\ $X_2$) above consists of the elements $xI\in X_w(b)$ satisfying $\inv(x_1,b\sigma(x_1))=sw$ (resp.\ $sws$).
Each map in the proposition sends $xI$ to $x_1I$.

We denote by $\delta^+$ the indicator function of the set of positive roots, i.e.,
$$\delta^+\colon \Phi\rightarrow \{0,1\},\quad \alpha \mapsto
\begin{cases}
1 & (\alpha\in \Phi_+) \\
0 & (\alpha\in \Phi_-).
\end{cases}$$
Note that any element $w\in \tW$ can be written in a unique way as $w=x\vp^\mu y$ with $\mu$ dominant, $x,y\in W_0$ such that $\vp^\mu y\in \SW$.
We have $p(w)=xy$ and $\ell(w)=\ell(x)+\la\mu, 2\rho\ra-\ell(y)$.
We define the set of {\it length positive} elements by $$\LP(w)=\{v\in W_0\mid \la v\alpha,y^{-1}\mu\ra+\delta^+(v\alpha)-\delta^+(xyv\alpha)\geq 0\  \text{for all $\alpha\in \Phi_+$}\}.$$
It is easy to check that $y^{-1}\in \LP(w)$ and hence $\LP(w)\neq \emptyset$.

For $w\in \tW$, we say that $w$ has {\it positive Coxeter part} if there exists $v\in \LP(w)$ such that $v^{-1}p(w)v$ is a (partial) Coxeter element.
By \cite[Theorem A]{SSY23} (see also \cite[Theorem 1.1]{HNY22}), this condition induces a simple geometric structure.
Although this is a theorem for general $b$, we specialize it to basic $b$ as follows.
\begin{theo}
\label{simple}
Let $w\in \tW$.
Assume that $\supp_\sigma(w)=\tS$ and there exists $v\in \LP(w)$ such that $v^{-1}p(w)v$ is a Coxeter element with $\supp(v^{-1}p(w)v)=\bS$.
Let $\tau\in\Omega$ be the unique element such that $w\in W_a\tau$.
Then $X_w(\tau)\neq \emptyset$.
Moreover, there exist elements $w=w_0,w_1,\ldots,w_r\in \tW$ satisfying the following conditions:
\begin{enumerate}[(i)]
\item For each $0\leq k<r$, there exists $s\in \tS$ such that $w_{k+1}=sw_ks$ and $\ell(w_{k+1})\leq \ell(w_k)$.
Moreover, $\ell(w_r)<\ell(w_{r-1})$.
\item For each $0\leq k<r$ with $\ell(w_{k+1})=\ell(w_k)-2$, the element $s$ in (i) satisfies $X_{sw_k}(\tau)=\emptyset$ and $X_{sw_ks}(\tau)\neq \emptyset$. In particular, if $k=r-1$, then $s\notin\supp_\sigma(w_r)$.
\item The element $w_r$ is a $\sigma$-Coxeter element and $\J_\tau\cap P_{\supp_\sigma(w_r)}$ is a very special parahoric subgroup of $\J_\tau$.
In particular, $\supp_\sigma(w_r)\subsetneq\tS$ and $w_r$ is of minimal length in its conjugacy class.
\end{enumerate}
Consequently, there are $\J_\tau$-equivariant morphisms
$$X_w(\tau)=X_{w_0}(\tau)\longrightarrow X_{w_1}(\tau)\longrightarrow\cdots\longrightarrow X_{w_r}(\tau),$$
each of which is a universal homeomorphism or a Zariski-locally trivial $\A^1$-fibration up to a universal homeomorphism.
\end{theo}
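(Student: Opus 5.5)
This theorem is a specialization of \cite[Theorem A]{SSY23} (see also \cite{HNY22}) to basic $b=\tau$, so the plan is to trace that argument rather than build a new one. The basic input is the Deligne--Lusztig reduction method (Proposition \ref{DL method prop}), controlled by the length-positivity condition.

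First, I would handle nonemptiness. Basic $\sigma$-conjugacy classes are those with $\kappa(b)=\kappa(w)$, and $\tau$ represents the basic class in $W_a\tau$. Nonemptiness of $X_w(\tau)$ is then the basic case of the nonemptiness criterion of \cite{GHN15}/\cite{SSY23}. The relevant input is that $w$ is not in a proper $\sigma$-stable standard parabolic piece, which follows from $\supp_\sigma(w)=\tS$, and that it has positive Coxeter part. It will also follow from the reduction below, since $X_{w_r}(\tau)\neq\emptyset$ for the $\sigma$-Coxeter element $w_r$ and the maps constructed are surjective.

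Second, I would construct the chain $w_0,\dots,w_r$. Write $w=x\vp^\mu y$ and pick $v\in\LP(w)$ with $c=v^{-1}p(w)v$ Coxeter. Conjugating $w$ by $v$ (realized as a sequence of conjugations by simple reflections) yields an element whose finite part is $c$ and whose translation part is $v^{-1}y^{-1}\mu$. The length-positivity inequality is exactly what guarantees that each conjugation step is length non-increasing. The key combinatorial point from \cite{SSY23} is that whenever a step drops the length by $2$, we have $\ell(sw_k)<\ell(w_k)$ and $X_{sw_k}(\tau)=\emptyset$. The reason is that $sw_k$ has strictly smaller $\sigma$-support or a Newton point incompatible with $\tau$ being basic, while $X_{sw_ks}(\tau)\neq\emptyset$. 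By Proposition \ref{DL method prop}(ii), the open piece $X_1$ is then empty, so $X_{w_k}(\tau)\to X_{sw_ks}(\tau)$ is an $\A^1$-bundle up to universal homeomorphism. Equal-length steps give universal homeomorphisms by Proposition \ref{DL method prop}(i). The process terminates at a minimal-length element $w_r$ in its $W_a$-conjugacy class, by He--Nie's reduction to minimal length elements. Its finite part is still Coxeter-like, which forces $w_r$ to be $\sigma$-Coxeter with $\supp_\sigma(w_r)\subsetneq\tS$. The final step is a strict drop, and the removed reflection $s$ is not in $\supp_\sigma(w_r)$.

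Third, the very-special claim in (iii) I would verify from the classification in \cite{SSY23}: the $\sigma$-Coxeter $w_r$ obtained this way has support equal to the complement of a single $\tau$-orbit that is special for $\J_\tau$. All the morphisms are $\J_\tau$-equivariant because they are given by $xI\mapsto x_1I$, which commutes with left multiplication.

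The main obstacle is the second step: showing that every length-$2$ drop has $X_{sw_k}(\tau)=\emptyset$. This needs a careful choice of reduction path through $\LP(w)$ and the "virtual dimension/Newton point" bookkeeping of \cite{SSY23}. I would simply cite their Theorem A and its proof for this step, rather than reprove it.
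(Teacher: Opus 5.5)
Your overall plan agrees with the paper's proof, which consists entirely of citations:
\begin{itemize}
\item nonemptiness is \cite[Proposition 4.1]{SSY23};
\item a chain satisfying (i) and (ii) and ending at a minimal length element comes from \cite[Corollary 2.10]{HN14} together with \cite[Theorem 5.7(c)]{SSY23};
\item the $\sigma$-Coxeter and very special properties of $w_r$ come from \cite[Theorem 5.7(d) \& Theorem 5.8]{SSY23};
\item the morphisms come from Proposition \ref{DL method prop}.
\end{itemize}
However, two of the mechanisms you describe are wrong, and the one step the paper actually argues is missing from your sketch.

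First, your stated reason for $X_{sw_k}(\tau)=\emptyset$, namely that $sw_k$ has strictly smaller $\sigma$-support, is backwards. If $\supp_\sigma(sw_k)\subsetneq\tS$, then Proposition \ref{spherical} writes $X_{sw_k}(\tau)$ as a disjoint union of $\J_\tau$-translates of a classical Deligne--Lusztig variety, which is nonempty by Lang's theorem. This is the same fact that underlies your own nonemptiness fallback for $w_r$. Used in the correct direction, it proves the last assertion of (ii), which you state without justification. Suppose $s\in J\coloneqq\supp_\sigma(w_r)$ and write $w_r=u\tau$ with $u\in W_J$. Then $sw_{r-1}=w_rs=u(\tau s\tau^{-1})\tau\in W_J\tau$, because $J$ is $\tau$-stable. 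Hence $X_{sw_{r-1}}(\tau)\neq\emptyset$ by Proposition \ref{spherical}, contradicting the first part of (ii).

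Second, length positivity does not make conjugation by $v$ length non-increasing. Take $\mu$ regular and large enough that $\supp_\sigma(w)=\tS$, with $x=1$ and $y=c$ a Coxeter element. Then $w=\vp^\mu c$ and $\ell(w)=\la\mu,2\rho\ra-\ell(c)$. Here $v=y^{-1}=c^{-1}\in\LP(w)$ and $v^{-1}p(w)v=c$, so all hypotheses hold. Yet $v^{-1}wv=c\vp^\mu$ has length $\ell(c)+\la\mu,2\rho\ra=\ell(w)+2\ell(c)$.

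So the chain is not obtained by conjugating by $v$. It is a He--Nie cyclic-shift path from $w$ to a minimal length element of its conjugacy class. The substantive input, that every length-$2$ drop along such a path has $X_{sw_k}(\tau)=\emptyset$, must be taken from \cite[Theorem 5.7(c)]{SSY23}; it is not derived from $\LP(w)$ in the way you describe.

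Finally, (iii) asserts that $\J_\tau\cap P_{\supp_\sigma(w_r)}$ is \emph{very} special, which is stronger than special. This stronger property is what the later sections use to pin down $\supp_\sigma(w_r)$.
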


\begin{proof}
The non-emptiness of $X_w(\tau)$ follows from \cite[Proposition 4.1]{SSY23}.
By \cite[Corollary 2.10]{HN14} and \cite[Theorem 5.7(c)]{SSY23}, there exists a sequence satisfying (i) and (ii) such that $w_r$ is of minimal length in its conjugacy class (cf.\ Proposition \ref{spherical} for the last assertion in (ii)).
By \cite[Theorem 5.7(d) \& Theorem 5.8]{SSY23}, the subset $\supp_\sigma(w_r)$ is very special with respect to $\tau$ and $w_r$ is a $\sigma$-Coxeter element.
The final assertion follows from Proposition \ref{DL method prop}.
\end{proof}

\begin{rema}
\label{positive Coxeter full support}
Assume that $X_w(\tau)\neq \emptyset$ and $\supp_\sigma(w)=\tS$.
If $w$ has positive Coxeter part, then any partial Coxeter element $v^{-1}p(w)v$ with $v\in \LP(w)$ satisfies $\supp(v^{-1}p(w)v)=\bS$ (cf.\  \cite[Proposition 5]{Schremmer23}).
\end{rema}

Combining Theorem \ref{simple} with Proposition \ref{spherical}, we see that, when $w\in W_a\tau$ is of positive Coxeter type, $\J_\tau$ acts transitively on the irreducible components of $X_w(\tau)$, and each irreducible component is an iterated $\A^1$-fibration over a classical Deligne--Lusztig variety of Coxeter type, up to universal homeomorphisms.
The stabilizer of each irreducible component is conjugate in $\J_\tau$ to $\J_\tau\cap P_{\supp_\sigma(w_r)}$.
In fact, by \cite[Theorem 5.20]{SSY23}, each irreducible component is universally homeomorphic to a product $Y(w_r)\times\A^d$, where $Y(w_r)$ is the classical Deligne--Lusztig variety of Coxeter type appearing in Proposition \ref{spherical}.
This product description has been generalized by Nie, Schremmer and Yu \cite{NSY25} to arbitrary $b$ with $X_w(b)\neq\emptyset$, in which case $\Gm$-factors may also occur.
We do not use this product decomposition in this paper.

\begin{lemm}
\label{injective}
Let $w\in\SW$, and assume that there exists $v\in\LP(w)$ such that $v^{-1}p(w)v$ is a Coxeter element with $\supp(v^{-1}p(w)v)=\bS$. Then, for any $b\in G(L)$, the natural map
$X_w(b)\longrightarrow\pi(X_w(b))$
is bijective.
\end{lemm}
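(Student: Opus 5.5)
Surjectivity is immediate from the definition of $\pi(X_w(b))$ as the image, so the content is injectivity. Suppose $xI,x'I\in X_w(b)$ satisfy $xK=x'K$. Replacing $x'$ by $xk$ with $k\in K$, we must show $kI=I$ whenever both $x^{-1}b\sigma(x)$ and $(xk)^{-1}b\sigma(xk)$ lie in $IwI$. Put $g=x^{-1}b\sigma(x)\in IwI$. The condition then becomes $k^{-1}g\sigma(k)\in IwI$ with $k\in K$. Hence it suffices to prove the following purely group-theoretic statement: for $u\in W_0$ and $g\in IwI$, if $k\in IuI$ and $k^{-1}g\sigma(k)\in IwI$, then $u=1$.

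I would prove this by a relative-position and length count. Recall that $w\in\SW$ is written $w=x\vp^\mu y$ with $\vp^\mu y\in\SW$, and $w\in\SW$ forces $x=1$. Then $k^{-1}g\sigma(k)\in Iu^{-1}I\cdot IwI\cdot IuI$, which is a union of cells $Iw'I$ with $w'\le u^{-1}wu$ in the sense of Demazure products. The natural route is the standard argument of \cite{GH15} (used to prove $X_\mu(b)=\bigsqcup\pi(X_w(b))$), which shows that for $w\in\SW$ the set $K\cdot_\sigma IwI$ meets $IwI$ only in $I\cdot_\sigma IwI$, except for contributions from $W_{\bS_w}$. Concretely, $K\cdot_\sigma IwI=\bigcup_{u}I u\cdot_\sigma IwI$, and for $u$ with $IuI\cdot_\sigma IwI\cap IwI\neq\emptyset$ one can inductively peel off simple reflections $s\le u$. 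At each step either $swσ(s)$ has greater length, which is impossible since the resulting cells move strictly up, or it has equal length and the element is conjugated. This forces $u$ to lie in the subgroup generated by reflections in $\bS_w$. Here $\sigma$ acts trivially on $W$ because $G$ is split.

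So the key step, and the main obstacle, is showing $\bS_w=\emptyset$ under the Coxeter hypothesis. Suppose $K\subseteq\bS$ is nonempty with $\Ad(w)(K)=K$. Then $p(w)=xy$ normalizes $W_K$ up to the translation part. I would exploit the length-positivity of $v$: conjugating by $v$ transports the situation to $v^{-1}p(w)v=c$, a Coxeter element with full support. Length positivity should guarantee that $v^{-1}Kv$ still corresponds to a set of roots permuted by $c$ and spanning a nonempty root subsystem, that is, a nonempty $c$-stable root subsystem $\Phi'$ (the roots of $\Ad(w)$-stable $K$ are permuted, since translations act trivially on finite roots). The contradiction then comes from the fact that a Coxeter element of full support has no eigenvalue $1$ and, more strongly, its orbits on $\Phi$ each span $X^*(T)_\Q$ modulo the center (Coxeter element orbits: each orbit has size $h$ and meets every Coxeter-element eigenline). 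A proper nonempty parabolic subsystem $W_K$ conjugated into a $c$-stable parabolic subsystem would mean $c$ normalizes a proper parabolic subgroup $W_{K'}$ of $W_0$ after conjugation, and by \cite[Proposition 5]{Schremmer23}-type arguments, or directly since the invariants of $c$ on the span of $\Phi'$ would give an eigenvalue-$1$ vector from the sum of a root orbit, this is impossible. The degenerate case $K=\bS$ needs a separate argument: a full-support Coxeter element cannot fix the positive system, and again the orbit sum is a nonzero $c$-fixed vector in the root span, which is a contradiction.

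If the bookkeeping with $\bS_w$ proves delicate, I would fall back on the alternative of invoking the Deligne–Lusztig reduction of Theorem~\ref{simple}. The fibers of $\pi$ restricted to $X_w(b)$ are $\J_b$-equivariantly controlled, and comparing dimensions $\ell(w)$-based of $X_w(b)$ and $\pi(X_w(b))$ via the formula above would support the same conclusion. However, I expect the direct root-theoretic contradiction to be the cleanest route.
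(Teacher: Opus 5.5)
\paragraph{Verdict.} There is a genuine gap, in the step where you show $\bS_w=\emptyset$.

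\paragraph{The reduction step.} Reducing injectivity to ``$k\in IuI$ with $u\in W_0$ and $k^{-1}IwI\sigma(k)\cap IwI\neq\emptyset$ forces $u=1$'' is fine. Aiming first for $u\in W_{\bS_w}$ is a reasonable target. The paper reaches the equivalent point by citation: \cite[Lemma 2.1]{Shimada4} reduces bijectivity to $\{x\in W_0\mid xw=wx\}=\{1\}$, and \cite[Lemma 2.1]{He23} places this centralizer in $W_{\bS_w}$. Your peeling sketch does not prove this step. Products of Iwahori double cosets can drop in length (e.g.\ $IsI\cdot IsI=I\sqcup IsI$), so ``the cells move strictly up'' is not a valid reason. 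This step should instead rest on the known structure of $K$-stable pieces.

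\paragraph{The gap.} You weaken ``$\Ad(w)$ permutes the simple reflections in $K$'' to ``$c$ stabilizes a nonempty root subsystem, i.e.\ normalizes a proper parabolic subgroup''. You then assert this is incompatible with $c$ being a full-support Coxeter element. It is not. In $W_0=S_4$, the element $c=s_1s_2s_3=(1\,2\,3\,4)$ normalizes the parabolic subgroup $\langle(1\,3),(2\,4)\rangle$, permuting its roots via $e_1-e_3\mapsto e_2-e_4\mapsto e_3-e_1\mapsto e_4-e_2$.

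Your ``nonzero orbit sum'' cannot work either. Since $c$ has no eigenvalue $1$ on $\Q\Phi$, the sum over any $c$-orbit of roots is $c$-fixed and hence equal to $0$. Your claim that every $c$-orbit spans $X^*(T)_\Q$ modulo the center also fails, e.g.\ in type $D_4$, where $-1$ is an eigenvalue of $c$ of multiplicity two. Conjugating by $v$ and invoking length positivity does not repair this. Conjugation transports ``being permuted'' for free; what is actually needed is a sign constraint, and $\LP$ does not provide it.

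\paragraph{The missing idea.} You need the sign coherence supplied by $w\in\SW$.
\begin{enumerate}
\item Let $s_a\in\bS_w$ with $a$ the corresponding simple affine root. Then $ws_aw^{-1}=s_{a'}$ with $s_{a'}\in\bS_w$, so $w(a)=\pm a'$.
\item Since $w\in\SW$, we have $\ell(s_{a'}w)>\ell(w)$, i.e.\ $w^{-1}(a')\in\tP_+$. Hence $w^{-1}(a')=a$, i.e.\ $w(a)=a'$.
\item Write $w=\vp^\lambda p(w)$, $a=(\alpha,0)$ and $a'=(\alpha',0)$. Then $w(a)=a'$ says $p(w)\alpha=\alpha'$ and $\langle\alpha',\lambda\rangle=0$.
\item So $p(w)$ permutes the finite simple roots attached to $\bS_w$ themselves, not merely up to sign or as a root subsystem.
\item Their sum is a nonzero $p(w)$-fixed vector of $\Q\Phi$. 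This contradicts the fact that $p(w)$ is conjugate to a Coxeter element with full support, which has no eigenvalue $1$ on $\Q\Phi$.
\end{enumerate}
This is the paper's argument. It uses only that $p(w)$ is conjugate to such a Coxeter element, not the condition $v\in\LP(w)$, and it needs no separate treatment of the case $K=\bS$.

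\paragraph{The fallback.} Your dimension-count fallback cannot replace this argument. Comparing dimensions cannot detect whether a surjection is injective; for instance, a map whose fibres are finite but have more than one point preserves dimension.
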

\begin{proof}
Let $x\in W_0$ be such that $xw=wx$. By \cite[Lemma 2.1]{Shimada4}, it suffices to show that $x=1$. By \cite[Lemma 2.1]{He23}, we have $x\in W_{\bS_w}$.
If $\bS_w\neq\emptyset$, then $\Ad(w)$ permutes $\bS_w$, and the assumption $w\in\SW$ implies that $p(w)$ permutes the corresponding simple roots. Their sum is then a nonzero $p(w)$-fixed vector, which is impossible since $p(w)$ is conjugate to a Coxeter element. Hence $\bS_w=\emptyset$ and $x=1$.
\end{proof}
Let $\mu\in\Y_+$ be non-central and minuscule, and let
$\tau_\mu\in\Omega$ be the image of $\vp^\mu$ under the natural
projection $\tW\rightarrow\Omega$.
As before, we fix a lift of $\tau_\mu$, which we denote by the same symbol.
Setting $b=\tau_\mu$, we obtain the following stronger statement.

\begin{coro}
\label{projection universal homeomorphism}
If $w\in\SAdm(\mu)$ satisfies the assumptions of Lemma \ref{injective}, then the natural map $X_w(\tau_\mu)\rightarrow\pi(X_w(\tau_\mu))$ is a universal homeomorphism.
\end{coro}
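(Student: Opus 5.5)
The plan is to upgrade the bijection of Lemma \ref{injective} to a universal homeomorphism by showing that the restriction of $\pi$ to $X_w(\tau_\mu)$ is proper. A proper morphism that is bijective on geometric points is universally closed, surjective and universally injective, hence a universal homeomorphism. Throughout, write $b=\tau_\mu$ and $Z=\pi(X_w(b))$. By the Ekedahl--Oort decomposition $X_\mu(b)=\bigsqcup_{w'\in\SAdm(\mu)}\pi(X_{w'}(b))$ for minuscule $\mu$, $Z$ is a locally closed stratum of $X_\mu(b)$.

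\textbf{Step 1 (base change).} Since $\pi\colon G(L)/I\to G(L)/K$ is proper, with fibers the flag variety $K/I$, its base change $\pi_Z\colon \pi^{-1}(Z)\to Z$ is proper. By the description of $\pi(Y(w))$ recalled before Proposition \ref{center} (following \cite[\S2.4]{GHN24} and \cite{GH15}),
$$\pi^{-1}(Z)=\{xI\mid x^{-1}b\sigma(x)\in K\cdot_\sigma IwI\},$$
and $K\cdot_\sigma IwI$ is a finite union of double cosets $Iw'I$. Hence $\pi^{-1}(Z)=\bigcup_{w'} X_{w'}(b)$ for a finite set of $w'$ containing $w$.

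\textbf{Step 2 (closedness).} I would show that $X_w(b)$ is closed in $\pi^{-1}(Z)$. The closure of $X_w(b)$ in $G(L)/I$ lies in $\bigcup_{w''\le w}X_{w''}(b)$. It therefore suffices to show that every $w'$ with $Iw'I\subseteq K\cdot_\sigma IwI$ has $\ell(w')\ge \ell(w)$, with equality only for $w'=w$. For this I would use that $w\in\SW$ is of minimal length in its $W_0$-$\sigma$-conjugation class (the partial conjugation of He), together with He's description of $K\cdot_\sigma IwI$ as a union of $Iw'I$ for $w'$ obtained from $w$ by length-nondecreasing partial conjugations. The same analysis should show that any $w'\neq w$ of length $\ell(w)$ gives a cell not meeting the closure of $X_w(b)$. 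Granting this, $X_w(b)=\pi^{-1}(Z)\cap\overline{X_w(b)}$ is closed in $\pi^{-1}(Z)$, so $\pi|_{X_w(b)}\colon X_w(b)\to Z$ is proper.

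\textbf{Step 3 (conclusion).} By Lemma \ref{injective}, $X_w(b)\to Z$ is bijective on $\aFq$-points. The proof of Lemma \ref{injective} (reduction to $x=1$ via \cite[Lemma 2.1]{Shimada4} and $\bS_w=\emptyset$) is insensitive to the algebraically closed field, so the map is bijective on $k$-points for every algebraically closed $k$, i.e.\ universally injective and surjective. Combined with properness from Step 2, it is a universal homeomorphism. In the perfect (mixed characteristic) setting the same argument applies verbatim to perfect schemes.

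\textbf{Main obstacle.} The hardest step is Step 2, namely controlling which Iwahori cells appear in $K\cdot_\sigma IwI$ and showing they cannot lie strictly below $w$ in the Bruhat order. If the length comparison via partial conjugation is not clean enough, I would fall back on another route. $Z$ is normal (indeed smooth in the perfect sense, being uniformized by an open part of a Shimura variety stratum), and $X_w(b)\to Z$ is bijective and separated of finite type. By Zariski's main theorem it is then an open immersion up to a universal homeomorphism, hence a universal homeomorphism.
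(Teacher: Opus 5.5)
Your route is the paper's: its proof is the one-line combination of Lemma \ref{injective}, the properness of $\pi$, and \cite[Lemma 2.4]{ST24}. You correctly isolate the one point that needs an argument: the restriction of $\pi$ to the locally closed $X_w(b)$ must still be proper, i.e.\ $X_w(b)$ must be closed in $\pi^{-1}(Z)$.

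However, the justification you sketch for this rests on two claims that are false, so Step 2 as planned would not go through.

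First, $K\cdot_\sigma IwI$ is in general not a union of Iwahori double cosets, so $\pi^{-1}(Z)$ is not a union of whole $X_{w'}(b)$. For $\GL_2$, consider $\begin{pmatrix}0&1\\p&0\end{pmatrix}=\dot s_1\dot\tau\dot s_1^{-1}\in K\cdot_\sigma I\tau I$ and $\begin{pmatrix}1&1\\0&p\end{pmatrix}=\dot s_1\begin{pmatrix}p&0\\1&1\end{pmatrix}\dot s_1^{-1}\in K\cdot_\sigma Is_0\tau I$. Both lie in $Is_1s_0\tau I$, yet these two pieces are disjoint by the Ekedahl--Oort decomposition.

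Second, the minimal-length cell is not unique. Take $\GSp_4$ and $w=s_0\tau$, whose finite part is a Coxeter element. Since $\tau s_2\tau^{-1}=s_0$, we get $s_2ws_2=s_2\tau\neq w$ with $\ell(s_2\tau)=\ell(w)$, and $\dot s_2\dot w\dot s_2^{-1}\in K\cdot_\sigma IwI\cap Is_2\tau I$. So the description via He's partial conjugation is not available in the form you want to quote it.

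Neither claim is needed. Let $k\in IxI$ with $x\in W_0$ and $h\in IwI$. Since $w\in\SW$, $\ell(xw)=\ell(x)+\ell(w)$, so $kh\sigma(k)^{-1}\in IxwI\cdot Ix^{-1}I$. Any $Iw'I$ meeting this set satisfies $\ell(w')\geq\ell(xw)-\ell(x)=\ell(w)$, because $IxwI$ then meets $Iw'I\cdot IxI$. Now take a point of $\pi^{-1}(Z)\cap\overline{X_w(b)}\subseteq\pi^{-1}(Z)\cap\bigcup_{w''\le w}X_{w''}(b)$. It lies in some $X_{w''}(b)$ with $w''\le w$ and $\ell(w'')\ge\ell(w)$, which forces $w''=w$. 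This gives closedness, and Step 3 then goes through.

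For universal injectivity, it is simpler not to rerun Lemma \ref{injective} over other fields. For morphisms (perfectly) locally of finite type over $\aFq$, bijectivity on $\aFq$-points already implies bijectivity on $k$-points for every algebraically closed $k$, by a constructibility argument.

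Discard the fallback, for two reasons. Normality of $Z$ is not available in the generality of the corollary: it covers arbitrary split $G$ and equal characteristic, and when $\supp_\sigma(w)=\tS$, $Z$ corresponds to the intersection of an Ekedahl--Oort stratum with the closed supersingular locus, not an open part of a stratum. Moreover, a bijective, separated, quasi-finite morphism onto a normal target need not be a homeomorphism, e.g.\ $(\mathbb A^1\setminus\{0\})\sqcup\{\mathrm{pt}\}\to\mathbb A^1$.
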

\begin{proof}
This follows from Lemma \ref{injective}, the properness of $\pi$, and \cite[Lemma 2.4]{ST24}.
\end{proof}
\begin{rema}
\label{dense Coxeter stratum}
It is easy to see that there exists a Coxeter element $v\in W_0$ satisfying $\supp(v)=\bS$ and $\vp^\mu v\in\SAdm(\mu)$. By \cite[Theorem 5.7(b)]{SSY23}, we have $\dim X_{\vp^\mu v}(\tau_\mu)=\dim X_\mu(\tau_\mu)$. The variety $X_\mu(\tau_\mu)$ is equidimensional (cf.\ \cite{HV12}, \cite{Takaya25}), and since $G$ is split, $\J_{\tau_\mu}$ acts transitively on its irreducible components (cf.\ \cite{HV18}). Thus, every irreducible component of $X_\mu(\tau_\mu)$ contains an irreducible component of $\pi(X_{\vp^\mu v}(\tau_\mu))$ as an open dense subscheme. 
The irreducible components of $\pi(X_{\vp^\mu v}(\tau_\mu))$ are pairwise disjoint, and they are affine because they are Deligne--Lusztig varieties of Coxeter type (cf.\ \cite[Corollary 2.8]{Lusztig76}). Hence, if $\dim X_{\mu} (\tau_{\mu}) >0$, then $X_\mu(\tau_\mu)\setminus\pi(X_{\vp^\mu v}(\tau_\mu))(\neq \emptyset)$ is equidimensional of codimension $1$ in $X_\mu(\tau_\mu)$. In particular, such $v$ is unique.

If $g\geq3$, $G$ is either $\GL_{2g}$ or $\GSp_{2g}$, and $\mu=(1^{(g)},0^{(g)})$, then $\supp_\sigma(\vp^\mu v)=\tS$, since $(G,\mu)$ is not fully Hodge--Newton decomposable (cf.\ \cite[Theorem D]{GHN19}).
\end{rema}

\subsection{The $\J$-stratification}
\label{J-stratification}
We recall the $\J$-stratification introduced by Chen and Viehmann in \cite{CV18}.
More precisely, following \cite{Shimada6}, we use a slightly finer version of this stratification.
We retain the name $\J$-stratification, as the two stratifications coincide in important cases.
\label{J-str}

Let $\inv_K(h,h')$ denote the image of $\inv(h,h')$ under the natural projection $\tW\rightarrow \tW/W_0$.
By sending $\vp^\ld u\in \tW$ to $\ld\in \Y$, we identify $\tW/W_0$ with $\Y$.
We assign to an element $xK\in G(L)/K$ the function
$$f=f_x\colon \J_b\rightarrow \Y,\quad j\mapsto \inv_K(j,x).$$
Note that $f$ is constant on cosets $j(I\cap \J_b)$.
By \cite[Theorem 2.10]{Gortz19}, the set $$S_f\coloneqq \{xK\in G(L)/K\mid f_x=f\}=\bigcap_{j\in \J_b}jI\vp^{f(j)}K/K$$ defines a locally closed reduced $\aFq$-subscheme of $\cG r := G(L)/K$.
Hence this invariant induces a stratification of $\cG r$.
By definition, each $I\vp^\ld K/K$ is a union of strata $S_f$.
By intersecting $S_f$ with $X_\mu(b)$, we obtain the $\J_b$-stratification of $X_\mu(b)$.
For $w\in \tW$, the $\J_{\dot w}$-stratification is independent of the choice of lift $\dot w$ in $N_{G(L)}T(L)$.
We usually fix $b\in \Omega$ and omit it from the notation.

Assume that $\mu$ is minuscule, and let $\tau\in\Omega$ satisfy $[\tau]\in B(G,\mu)$.
For $\lambda\in\Y$, set $\Xl=X_\mu(\tau)\cap I\vp^\lambda K/K$ and $\cAm=\{\lambda\in\Y\mid\Xl\neq\emptyset\}$. By \cite[Proposition 2.9]{Nie22}, we have $\cAm=\{\lambda\in\Y\mid\tau(\lambda)-\lambda\in W_0\mu\}$. Let $\cSm$ be the set of non-empty $\J$-strata of $X_\mu(\tau)$. We call a $\J$-stratum {\it parahoric} (resp.\ {\it standard parahoric}) if its stabilizer in $\J_\tau$ is parahoric (resp.\ standard parahoric), and denote by $\cSm^{\mathrm{par}}$ the set of parahoric $\J$-strata. For $\alpha\in\Phi$, set $\lambda_\alpha=\langle\alpha,\lambda\rangle-1$ if $\alpha\in\Phi_+$ and $\lambda_\alpha=\langle\alpha,\lambda\rangle$ if $\alpha\in\Phi_-$. Then $\lambda\in\cAm$ is called {\it small} if, for every $\alpha\in\Pi$, there exists an element $\beta$ in the $p(\tau)$-orbit of $\alpha$ such that $\lambda_\beta\leq0$. Let $\cAm^{\mathrm{sm}}\subseteq\cAm$ be the set of small cocharacters. 
Set $\Pi(\lambda)=\{\alpha\in\Pi\mid\lambda_\beta\geq0\text{ for every $\beta$ in the $p(\tau)$-orbit of $\alpha$}\}$.
The following is \cite[Theorem 4.5 \& Corollary 4.6]{Shimada6}.
\begin{theo}
\label{parahoric J-strata}
For $\lambda\in\cAm$, the following are equivalent:
\begin{enumerate}[(i)]
\item The cocharacter $\lambda$ is small;
\item The variety $\Xl$ is irreducible;
\item The variety $\Xl$ contains a unique standard parahoric $\J$-stratum $S$.
\end{enumerate}
If these conditions hold, then $S$ is open in $\Xl$, and the stabilizers in $\J_\tau$ of $S$ and $\overline{\Xl}$ are both $\J_\tau\cap P_{\Pi(\lambda)}$.
Moreover, there is a bijection
$$\J_\tau\backslash\cSm^{\mathrm{par}}\xrightarrow{\sim}\Omega\backslash\cAm^{\mathrm{sm}},\qquad \J_\tau S\longmapsto\Omega\lambda,$$
where $X_\mu^\lambda(\tau)$ contains a standard parahoric representative of $\J_\tau S$.
\end{theo}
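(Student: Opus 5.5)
The plan is to work directly with the explicit description of $\Xl$ coming from the Iwasawa decomposition. Every point of $I\vp^\lambda K/K$ can be written uniquely as $x=u\vp^\lambda K$ with $u$ in the product, taken in a fixed order, of the root subgroups $U_\alpha(\vp^{k}\cO)$ modulo $\vp^\lambda K\vp^{-\lambda}$. The admissible exponents $k$ are governed by $\lambda_\alpha$; for instance only finitely many coefficients survive for each $\alpha$, and there are exactly $\max(\lambda_\alpha,0)$-many relevant ones up to the normalisation by $\delta^+$. Substituting into $x^{-1}\tau\sigma(x)\in K\vp^\mu K$ and using $\tau(\lambda)-\lambda\in W_0\mu$ (Nie's criterion recalled before the statement), the condition becomes a system of Lang-type equations. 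Each such equation links the coefficients for $\alpha$ with the $\sigma$-twisted coefficients for $p(\tau)\alpha$. I would organise these equations by $p(\tau)$-orbits on $\Phi$, following the method of \cite{Nie22}.

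The central step is the orbit-by-orbit analysis, and I expect it to be the main obstacle because of the interaction between different orbits (commutator terms). Suppose an orbit contains some $\beta$ with $\lambda_\beta\le 0$. Then going once around the orbit breaks the chain of equations at $\beta$, so the variables can be solved freely. This orbit then contributes an affine space, up to universal homeomorphism. Suppose instead that $\lambda_\beta\ge0$ along the whole orbit. Then the chain closes up into a genuine Lang equation $\sigma^d(y)=y+\cdots$, whose solution set is a disjoint union of finitely many copies indexed by $\F_{q^d}$-rational data; these data are precisely the points of $(\J_\tau\cap U)/(\J_\tau\cap U^+)$ for the corresponding root subgroup. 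To make this rigorous I would order the roots by height and treat the commutator contributions as lower-order terms, proving by induction that $\Xl$ is an iterated fibration whose fibres are either affine lines or finite sets.

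From this description the three equivalences follow. $\Xl$ is irreducible iff no closed Lang cycle arises from a simple affine root in $\Pi$, and this is exactly smallness; this gives (i)$\Leftrightarrow$(ii). For non-simple roots, closed cycles are absorbed by the irreducibility of the simple factors. For (iii), the function $f_x$ is determined by which of the discrete Lang-solutions $x$ takes. The open locus where every free coordinate is "generic" is a single stratum $S$, and this is the only stratum whose stabilizer contains $\J_\tau\cap I$.

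The stabilizer computation is next. The group $\J_\tau\cap U_\alpha(\vp^k\cO)$ preserves $\Xl$, and so preserves $\overline{\Xl}$, exactly when the corresponding affine root lies in the subsystem generated by $\Pi(\lambda)$. Together with $\J_\tau\cap I$ this generates $\J_\tau\cap P_{\Pi(\lambda)}$. Maximality of this stabilizer follows because a parahoric properly containing it would contain a root subgroup for some $\alpha\in\Pi$ with $\lambda_\beta<0$ for some $\beta$ in the orbit of $\alpha$, and such a subgroup moves $\vp^\lambda$ out of $I\vp^\lambda K$.

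Finally, for the bijection I would use that every parahoric of $\J_\tau$ is $\J_\tau$-conjugate to a standard one, unique up to the length-zero part $\Omega$ acting on $\tS$. So every parahoric $\J$-stratum has a $\J_\tau$-translate with standard parahoric stabilizer. By (iii) this translate lies in some $X^\lambda_\mu(\tau)$ with $\lambda$ small. Two such standard representatives differ by an element of $\J_\tau$ normalising $\J_\tau\cap I$, which is represented by an element of $\Omega$. Hence the map $\J_\tau S\mapsto\Omega\lambda$ is well defined and injective, and surjectivity follows from (i)$\Rightarrow$(iii).
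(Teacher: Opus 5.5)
The paper does not prove this statement; it imports it from \cite[Theorem 4.5 \& Corollary 4.6]{Shimada6}. Your outline therefore has to stand on its own, and it has a genuine gap exactly at (i)$\Rightarrow$(ii).

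The coordinates of $I\vp^\lambda K/K$ live on the positive affine roots $a=\tilde\beta+t\delta$ with $\lambda_\beta-t\ge1$. (Your dichotomy, ``some $\lambda_\beta\le0$'' versus ``all $\lambda_\beta\ge0$'', overlaps and uses the wrong threshold.) In the linearised equations a chain closes for \emph{every} $\tau$-orbit of such affine roots, simple or not, and for small $\lambda$ non-simple closed chains really occur.

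Here is an example. Take $G=\GSp_8$ with $\mu=(1^{(4)},0^{(4)})$ and $\tau$ as in \S3, and $\lambda=\lambda^{(2)}=(0,1,1,1,0,0,0,1)$, which is small by Lemma~\ref{lemmma:codimone}. Then $I\vp^\lambda K/K$ is parametrised by $u=1+[a](E_{21}-E_{87})+[b](E_{31}-E_{86})+[c](E_{41}-E_{85})+[d]E_{81}$. The non-simple root $\chi_{41}$ is fixed by $p(\tau)$ and has $\lambda_{\chi_{41}}=1$, so your chain closes there. Checking $\tau\sigma(M)\subseteq M$ for $M=u\vp^\lambda\breve{\Z}_p^8$ gives, after perfection,
\[
\Xl\cong\{(a,b,c,d)\in\mathbb A^4\mid c^p+c+a^pb+ab^p=0\}.
\]
This hypersurface is irreducible only because of the cross term $a^pb+ab^p$. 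That term consists of the coordinates at $\tilde\alpha_1$ and $\tilde\alpha_1+\tilde\alpha_2$, each multiplied by the $\tau\sigma$-translate of the other; the two products add up to the closed root. If you treat it as a ``lower-order term'', you get $\{c^p+c=0\}\times\mathbb A^3$, which has $p$ components.

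So a closed Lang chain does not give ``finitely many copies''. It gives a finite \'etale cover whose connectedness is decided entirely by the interaction terms you set aside. Knowing that $\Xl$ is an iterated fibration with fibres $\mathbb A^1$ or finite sets says nothing about irreducibility. The sentence ``closed cycles are absorbed by the irreducibility of the simple factors'' stands in for what actually has to be proved: for small $\lambda$, every non-simple closed chain carries an inhomogeneous term that makes the Lang cover connected. For orbits of simple affine roots no such term can arise, since a simple affine root is not a sum of two positive ones.

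Two further steps are only asserted.
\begin{itemize}
\item For (iii), existence of an open standard parahoric stratum does follow once $\Xl$ is irreducible. Only finitely many $\J$-strata meet $\Xl$ \cite[Theorem 2.10]{Gortz19}, the dense one is open, and it is $\J_\tau\cap I$-stable. But uniqueness among $\J_\tau\cap I$-stable strata, and the converse (iii)$\Rightarrow$(i), require an actual analysis of $f_x(j)=\inv_K(j,x)$ for all $j\in\J_\tau$. Your claim that $f_x$ is determined by the discrete Lang data is not justified.
\item In the stabilizer step you conflate $\Xl$ with its closure. If $\alpha\in\Pi(\lambda)$ has $\lambda_\alpha\ge1$ (as $\alpha_1$ does in the example), then $P_{\{\alpha\}}\vp^\lambda K$ meets $I\vp^{s_{\tilde\alpha}(\lambda)}K\subseteq\overline{I\vp^\lambda K}\setminus I\vp^\lambda K$. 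So $\J_\tau\cap P_{\Pi(\lambda)}$ need not preserve $\Xl$, which is why the theorem speaks only of $S$ and $\overline{\Xl}$. Also, $\J_\tau\cap U_\alpha(\vp^k\cO)$ is trivial unless $p(\tau)\alpha=\alpha$, so you need the root subgroups of $J_\tau$ attached to $\tau$-orbits.
\end{itemize}
Your final paragraph on the bijection is fine once (iii), including uniqueness, is available.
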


For $\lambda\in\cAm$, let $\epsilon_\lambda\in W_0$ be the unique element such that $\{\alpha\in\Phi\mid\lambda_\alpha\geq0\}=\epsilon_\lambda\Phi_+$, and set $\lambda^\natural=\tau(\lambda)-\lambda$ and $\lambda^\flat=\epsilon_\lambda^{-1}\lambda^\natural$. By \cite[Proposition 3.9]{Shimada6}, there is a map
$$\flat\colon\Omega\backslash\cAm^{\mathrm{sm}}\longrightarrow\{\nu\in W_0\mu\mid\nu\leq\nu_\tau\},\qquad\Omega\lambda\longmapsto\lambda^\flat.$$
Here $\nu\leq\nu_\tau$ means that $\nu_\tau-\nu$ is a non-negative rational linear combination of positive coroots.
By \cite[Proposition 3.12]{Shimada6}, we have 
\[
\label{eqn:dimension}
\dim X_\mu^\lambda(\tau)=\langle\rho,\mu+\lambda^\flat\rangle.
\]
In general, only the equality $\#\bigl(\Omega\backslash\cAm^{\mathrm{sm}}\bigr)=\#\{\nu\in W_0\mu\mid\nu\leq\nu_\tau\}$ is known. Fortunately, for general linear and symplectic similitude groups, we have the following result \cite[Theorem 5.18 \& \S 6.2]{Shimada6}. 
\begin{theo}
\label{flat bijection}
If $G=\GL_{n}$ or $\GSp_{2g}$, then the map $\flat$ is bijective.
\end{theo}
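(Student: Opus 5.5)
Since the equality $\#\bigl(\Omega\backslash\cAm^{\mathrm{sm}}\bigr)=\#\{\nu\in W_0\mu\mid\nu\leq\nu_\tau\}$ is already known, it suffices to prove that $\flat$ is surjective (or, equivalently, injective). I would prove surjectivity. For each $\nu\in W_0\mu$ with $\nu\leq\nu_\tau$, I will construct explicitly a small $\lambda\in\cAm$ with $\lambda^\flat=\nu$. I would first treat $G=\GL_n$ and then deduce the case $\GSp_{2g}$ by viewing it inside $\GL_{2g}$.

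\textbf{The case $G=\GL_n$.} Here $\mu=(1^{(d)},0^{(n-d)})$ is minuscule, and $\tau$ is the length-zero element with $\kappa(\tau)=d$. So $p(\tau)$ is the $d$-th power of the standard $n$-cycle, and $\tau(\lambda)=p(\tau)\lambda+\mu'$ for a fixed $0$--$1$ vector $\mu'$. The condition $\tau(\lambda)-\lambda\in W_0\mu$ then says that, along the $p(\tau)$-orbits of the indices, the entries of $\lambda$ change by prescribed $0$/$1$ increments. Consequently $\lambda$ is encoded, up to the $\Omega$-action (a cyclic shift together with a central translation), by a closed lattice path with $d$ up-steps among $n$ steps. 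The vector $\lambda^\natural=\tau(\lambda)-\lambda$ records these steps, and $\epsilon_\lambda$ is the permutation sorting the entries of $\lambda$, with ties broken according to the sign convention defining $\lambda_\alpha$.

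The condition $\nu\leq\nu_\tau$ with $\nu_\tau=(d/n,\dots,d/n)$ says that the partial sums of $\nu$ lie below the line of slope $d/n$; this is a ballot/Dyck-type condition. Given such a $\nu$, I would define $\lambda$ by reading $\nu$ as the step sequence of a path and rotating cyclically so that the path stays weakly above its minimum. Then $\lambda$ is the vector of heights, placed at positions chosen so that $p(\tau)$ acts as the successor map. With this choice I would check two things directly from the definitions of $\lambda_\alpha$ and $\Pi$:
\begin{enumerate}[(a)]
\item smallness: every $\alpha\in\Pi$ has some $\beta$ in its $p(\tau)$-orbit with $\lambda_\beta\le0$;
\item $\epsilon_\lambda^{-1}\lambda^\natural=\nu$.
\end{enumerate}

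\textbf{The case $G=\GSp_{2g}$.} I would run the same construction. For $\nu\in W_0\mu$ satisfying the symplectic condition ($\nu_i+\nu_{2g+1-i}$ constant) and $\nu\le\nu_\tau$, I would make the path and the rotation symmetric under $i\mapsto 2g+1-i$, so that the resulting $\lambda$ lies in $X_*(T)\subseteq\Z^{2g}$. Smallness for the affine simple roots of $\GSp_{2g}$ follows from smallness of the corresponding roots for $\GL_{2g}$. The element $\epsilon_\lambda$ in $W_0(\GSp_{2g})$ is the restriction of the $\GL_{2g}$ one, because a symmetric sorting is possible.

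\textbf{Main obstacle.} I expect the hardest part to be the tie-breaking in $\epsilon_\lambda$, where $\lambda_\alpha$ is shifted by $1$ on positive roots. Equal entries of $\lambda$ must be ordered exactly so that $\epsilon_\lambda^{-1}$ sends the step vector $\lambda^\natural$ to $\nu$, and not merely to some other element of $W_0\mu$. I would first try to choose the rotation base point at the last occurrence of the minimum height, and then verify the tie-breaking by a direct comparison of consecutive entries. If surjectivity becomes messy, the fallback is injectivity: recover $\Omega\lambda$ from $\lambda^\flat$ by reconstructing the path from its sorted step sequence, using smallness to pin down the rotation.
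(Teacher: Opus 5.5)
You correctly reduce to surjectivity via the known equality of cardinalities, and passing from $\GSp_{2g}$ to $\GL_{2g}$ matches how $\lambda^\flat$ is computed in \cite[\S 6.2]{Shimada6}. The paper itself gives no proof of this theorem; it cites \cite[Theorem 5.18 \& \S 6.2]{Shimada6}. There is, however, a genuine gap in step (b), and it has nothing to do with tie-breaking. The permutation $\epsilon_\lambda$ sorts the entries of $\lambda$. Hence $\lambda^\flat$ lists the steps $\lambda^\natural$ in order of the \emph{heights} at which they occur, not in path order, and choosing $\lambda^\natural$ to be a rotation of $\nu$ generally yields a different ballot word.

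Concretely, take $n=7$, $d=3$, so that $\tau(\lambda)=(\lambda(5)+1,\lambda(6)+1,\lambda(7)+1,\lambda(1),\lambda(2),\lambda(3),\lambda(4))$. Since $\gcd(7,3)=1$, every element of $\cAm$ is small. Let $\nu=(0,0,1,0,1,0,1)$.
\begin{itemize}
\item If $\lambda^\natural$ is a rotation of $\nu$ in index order, then $\lambda$ lies in the $\Omega$-orbit of $(0,1,0,0,0,0,0)$. There $\lambda^\natural=(1,0,1,0,1,0,0)$, but $\lambda^\flat=(0,0,0,1,0,1,1)$.
\item If you place $\nu$ along the $p(\tau)$-orbit $1\to4\to7\to3\to6\to2\to5$, you get the orbit of $\lambda=0$, whose $\lambda^\flat=(0,0,0,0,1,1,1)$.
\item The actual preimage of $\nu$ is the orbit of $(0,1,1,0,0,1,0)$. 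Its $\lambda^\natural=(1,1,0,0,1,0,0)$ is not a rotation of $\nu$ in either sense.
\end{itemize}
Since $\lambda^\flat$ depends only on the $\Omega$-orbit, no choice of base point (last minimum or otherwise) repairs this. For $\gcd(n,d)=1$, the map $\flat$ is, up to reversing words and applying the cycle lemma, the rational sweep map of Armstrong--Loehr--Warrington. Its bijectivity was a conjecture settled by Thomas--Williams. Your fallback, ``reconstructing the path from its sorted step sequence'', is exactly the problem of inverting that map, so it meets the same obstacle.

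Second, the single closed-path encoding exists only when $p(\tau)$ is an $n$-cycle, i.e.\ when $\gcd(n,d)=1$. In the case relevant to $\GSp_{2g}$ ($n=2g$, $d=g$), the $p(\tau)$-orbits on indices are the pairs $\{i,i+g\}$. Membership in $\cAm$ then only says $\lambda(i)-\lambda(i+g)\in\{0,1\}$. All the information lies in the relative heights of the $g$ orbits, which is exactly what smallness constrains. So there is no single path on which to ``run the same construction symmetrically''.

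As it stands, the proposal proves neither surjectivity nor injectivity. You need an actual inverse of $\flat$, or an injectivity argument, that accounts for the reordering by heights and, when $\gcd(n,d)>1$, for the interaction between different orbits.
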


\begin{rema}
Karemaker and Yu introduced a stratification of the union of all supersingular Ekedahl--Oort strata. It has a unique maximal stratum, which is open and dense in this union. If $g$ is even and $p\geq5$, every point of this stratum has automorphism group $\{\pm1\}$. They conjectured that this stratification refines the $\J$-stratification. See \cite[Theorem 6.17 \& Remark 6.21]{KY24}.
\end{rema}

\subsection{Reformulation}
Let $\bQp$ be the completion of the maximal unramified extension of $\Q_p$, and let $\breve{\Z}_p$ be its ring of integers. From this subsection until the end of this paper, we assume that $F=\Q_p$ and $L=\bQp$.
The following reformulation is due to Viehmann (see \cite[\S1]{Viehmann08b} and \cite[\S2]{Viehmann26}).

Let $(\mathbb X,\vph_0)$ be a principally polarized superspecial $p$-divisible group of dimension $g$ over $\overline{\F}_p$. For any $\breve{\Z}_p$-scheme $S$ on which $p$ is locally nilpotent, let
$$\mathcal M_g(S)=\{(X,\vph,\rho)\}/\cong,$$
where $X$ is a $p$-divisible group over $S$, $\vph$ is a $p$-power multiple of a principal polarization of $X$, and $\rho\colon\mathbb X_{\overline S}\rightarrow X_{\overline S}$ is a quasi-isogeny over the reduction $\overline S$ of $S$ modulo $p$ compatible with the polarizations. This functor is represented by a formal scheme $\mathcal M_g$ locally formally of finite type over $\operatorname{Spf}\breve{\Z}_p$. By abuse of notation, we identify $\mathcal M_g$ with its underlying reduced scheme over $\overline{\F}_p$.

A point $x\in\mathcal M_g(\overline{\F}_p)$ is represented by a triple $(X,\vph,\rho)$. The Dieudonn\'e module corresponding to $(X,\vph,\rho)$ is a triple $(M,F,\langle\cdot,\cdot\rangle)$, where $M$ is a free $\breve{\Z}_p$-module of rank $2g$, $F\colon M\rightarrow M$ is $\sigma$-linear with $M\supseteq F(M)\supseteq pM$, and $\langle\cdot,\cdot\rangle$ is a symplectic pairing on $M\otimes_{\breve{\Z}_p}\bQp$ such that $M^\vee=cM$ for some $c\in\bQp$, where
$$M^\vee=\{v\in M\otimes_{\breve{\Z}_p}\bQp\mid\langle v,M\rangle\subseteq\breve{\Z}_p\}.$$
Moreover, $F$ preserves the pairing up to a scalar, and
\begin{align*}
\operatorname{Aut}(X,\vph)&=\operatorname{Aut}(M,F,\langle\cdot,\cdot\rangle)\\
&=\{j\in\operatorname{Aut}_{\breve{\Z}_p}(M)\mid jF=Fj,\ \langle jv,jw\rangle=\langle v,w\rangle\text{ for all }v,w\in M\}.
\end{align*}

Let $(N,F,\langle\cdot,\cdot\rangle)$ be the rational Dieudonn\'e module of $(\mathbb X,\vph_0)$ and set $\breve N=N\otimes_{\Q_p}\bQp$. There is a basis $e_1,\ldots,e_g,f_1,\ldots,f_g$ of $N$ such that $F(e_i)=f_i$, $F(f_i)=pe_i$, and
$$\langle e_i,f_{g+1-i}\rangle=-\langle f_i,e_{g+1-i}\rangle=1$$
for all $i$, with all other pairings between the basis vectors equal to zero. Via $\rho$, the module $M$ is identified with a lattice in $\breve N$, and
\begin{align*}
\operatorname{Aut}(M,F,\langle\cdot,\cdot\rangle)&\subseteq\operatorname{Aut}(\breve N,F,\langle\cdot,\cdot\rangle)\\
&=\{j\in\operatorname{Aut}_{\bQp}(\breve N)\mid jF=Fj,\ \langle jv,jw\rangle=\langle v,w\rangle\text{ for all }v,w\in\breve N\}.
\end{align*}
The group on the right acts on $\mathcal M_g$ by precomposing $\rho$.
The automorphism groups of points in the same orbit are conjugate subgroups of this group.

Let $G=\GSp_{2g}$, $\mu=(1^{(g)},0^{(g)})\in X_*(T)$ and $\tau=\tau_\mu=\begin{pmatrix}0& p\cdot 1_g\\1_g&0\end{pmatrix}\in\Omega$.
We have the bijection
$$X_\mu(\tau)\xrightarrow{\sim}\mathcal M_g(\overline{\F}_p),\qquad xK\longmapsto x\breve{\Z}_p^{2g}.$$
More precisely, the perfection of $\mathcal M_g$ is isomorphic to $X_\mu(\tau)$ as a perfect scheme (cf.\ \cite[Proposition 5.1]{GHN24}).
The above identification of $\breve N$ with $\bQp^{2g}$ induces an isomorphism between $\operatorname{Aut}(\breve N,F,\langle\cdot,\cdot\rangle)$ and $\J^1\coloneqq\J \cap\Sp_{2g}(\bQp)$, where $\J=\J_\tau$.

Recall that $\mathscr S_g$ is the supersingular locus in the moduli space over $\overline{\F}_p$ of $g$-dimensional principally polarized abelian varieties with a level $N$ structure for some integer $N$ coprime to $p$, and write $\mathscr S_g^{\pfn}$ for the perfection of $\mathscr S_g$. Let $K^p\subseteq G(\A_f^p)$ be the principal congruence subgroup of level $N$. Fix a principally polarized superspecial abelian variety whose polarized $p$-divisible group is isomorphic to $(\mathbb X,\vph_0)$, and let $I$ be the algebraic group over $\Q$ of its self-quasi-isogenies that preserve the polarization up to a scalar, so that $I_{\Q_p}\cong J_\tau$ and $I_{\A_f^p}\cong G_{\A_f^p}$. If $N\geq3$, the Rapoport--Zink uniformization gives an isomorphism of perfect schemes
$$I(\Q)\backslash\bigl(X_\mu(\tau)\times G(\A_f^p)/K^p\bigr)\xrightarrow{\sim}\mathscr S_g^{\pfn}$$
(cf.\ \cite[Theorem 6.30]{RZ96} and \cite[\S5.2]{HZZ21}).
The morphism forgetting the level structure is finite and surjective, hence a quotient map. 
Thus the presence of a level structure does not affect the openness, density or dimension of the subvarieties considered in this paper.
The quotient morphism from $X_\mu(\tau)\times G(\A_f^p)/K^p$ to its $I(\Q)$-quotient is \'etale and surjective. 
Consequently, the openness, density and dimension of $I(\Q)$-stable locally closed subvarieties may be checked before taking the quotient.

Let $(A,\vph)$ be a principally polarized supersingular abelian variety over $\overline{\F}_p$ and let $(X,\vph)$ be its $p$-divisible group. Then $\operatorname{Aut}(A,\vph)$ is a finite subgroup of $\operatorname{Aut}(X,\vph)$. 
Under the uniformization theorem, the pair $(A,\vph)$ corresponds to the class of a pair $((X,\vph,\rho),g^pK^p)$ for some $\rho$ and $g^pK^p$.
It is therefore enough to study finite-order automorphisms of $(X,\vph)$.
By the above identification of $\mathcal M_g$ with $X_\mu(\tau)$, Oort's conjecture follows from the following:
\begin{theo}
Let $g\ge 2$ and $p\ge 5$.
There exists an open dense subscheme $U\subseteq X_\mu(\tau)$ such that, for every closed point $xK\in U$, if $j\in\J^1$ has finite order and $jxK=xK$, then $j=\pm1$.
\end{theo}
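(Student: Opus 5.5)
We deduce the theorem from the (ii)-type result at the level of $X_\mu(\tau)$, i.e.\ from the dense Coxeter stratum of Remark \ref{dense Coxeter stratum}. For $g=2$ (where $\supp_\sigma(\vp^\mu v)$ may be proper) we instead use the (i)-type argument. Let $v$ be the unique Coxeter element with $\vp^\mu v\in\SAdm(\mu)$, and put $w=\vp^\mu v$. By Remark \ref{dense Coxeter stratum}, $\pi(X_w(\tau))$ is open and dense in $X_\mu(\tau)$. Its irreducible components are pairwise disjoint and permuted transitively by $\J$. It therefore suffices to find, in one component $C$, an open dense subset $U_C$ on which no nontrivial finite-order $j\in\J^1\setminus\{\pm1\}$ has a fixed point. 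We then take $U=\bigcup_{j'\in\J}j'U_C$. This set is open, because the components are disjoint and open in $\pi(X_w(\tau))$, and it is dense.

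The first step reduces to elements fixing $C$ pointwise. Let $\J\cap P$ be the stabilizer of $C$, with $P=P_{\supp_\sigma(w_r)}$ from Theorem \ref{simple}. If $j\in\J^1$ has finite order and fixes a point of $C$, then $j$ stabilizes $C$, since the components are disjoint. Hence $j\in\J\cap P$, a compact group. By Lemma \ref{finite quotient}, $\J\cap P$ acts on $C$ through a finite quotient $\Gamma$. For each $\gamma\in\Gamma$ that does not act trivially, the fixed locus $C^\gamma$ is a proper closed subset, and $U_C=C\setminus\bigcup_\gamma C^\gamma$ is open and dense. On $U_C$, any finite-order $j$ fixing a point acts trivially on all of $C$. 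So it remains to show that a finite-order $j\in\J^1\cap P$ fixing $C$ pointwise equals $\pm1$.

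The second step passes to the Deligne--Lusztig variety and shows $j$ is central. By Corollary \ref{projection universal homeomorphism}, $X_w(\tau)\to\pi(X_w(\tau))$ is a universal homeomorphism, so we may work in $X_w(\tau)$. The reduction chain of Theorem \ref{simple} gives a $\J$-equivariant surjection of $C$ onto a component $Y(w_r)$ of $X_{w_r}(\tau)$, with stabilizer $\J\cap P$. Thus $j$ fixes $Y(w_r)$ pointwise. By Proposition \ref{center}, the image $\bar j$ of $j$ in $\overline P^{\Ad(\tau)\circ\sigma}$ is central. So $j\in Z\cdot(\J\cap P^+)$ with $Z$ a lift of the centre. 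Since $j$ has finite order, $\J\cap P^+$ is pro-$p$, and $p\ge5$ exceeds the order of the relevant torsion, this forces $j$ into a finite central-type subgroup. In $\J^1$ that subgroup is $\{\pm1\}$ modulo contributions from the centre of the reductive quotient $\overline P$ that do not come from the centre of $G$.

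The final step, which I expect to be the main obstacle, handles these non-global central elements. The very special parahoric $P$ from Theorem \ref{simple}(iii) can have reductive quotient with several factors, for example unitary-type pieces $\mathrm U_a\times\mathrm U_b$ in $\J$. Then $Z(\overline P)$ contains elements such as $(\zeta,\zeta')$ with $\zeta\neq\zeta'$. These fix the Deligne--Lusztig variety $Y(w_r)$, but we expect them to move points in the $\A^d$-fibre directions. To rule them out, we would return to the fibration steps of Theorem \ref{simple} where $\ell$ drops by $2$. There a point $x$ and its image $x_1$ differ by the root subgroup $U_{\alpha}$ for an affine root $\alpha$ linking two blocks, with $\alpha\notin\supp_\sigma(w_r)$. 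A torus element acting by scalars $\zeta$, $\zeta'$ on the two blocks rescales the fibre coordinate by $\zeta\zeta'^{-1}$ (up to Frobenius twist), so fixing the fibre pointwise forces $\zeta=\zeta'$. Running over all such linking roots, the Coxeter condition (full support $\tS$) connects all blocks, which forces $j$ to be scalar. Scalars in $\J^1$ of finite order are $\pm1$. The delicate part is making the root-subgroup computation explicit and compatible with the lifts. The hypothesis $p\ge5$ enters both in identifying the torsion of $\J\cap P$ modulo $P^+$ and in avoiding small-field degeneracies where $\zeta\zeta'^{-1}$ acts trivially on $\aFq$-points of the relevant Frobenius-twisted line.
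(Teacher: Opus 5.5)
Your overall strategy is the paper's. The paper deduces this statement (Remark \ref{Oort conjecture from a-number loci}) from Theorem \ref{main theo} for $g\ge3$, applied for instance to the dense stratum $\vp^\mu v$. For even $g$, including $g=2$, it uses Theorem \ref{maximal supersingular EO}; there $\vp^\mu v=s_0\tau$ has $\supp_\sigma=\tS\setminus\{s_1\}$. The reduction is the same as yours: disjointness of components, Lemma \ref{finite quotient}, Corollary \ref{projection universal homeomorphism}, the reduction chain of Theorem \ref{simple}, and Proposition \ref{center}.

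The genuine gap is how you dispose of the part of $j$ lying in $\J^1\cap P_J^+$. Being pro-$p$ does not exclude torsion. A pro-$p$ group may contain elements of order $p$, and for $p=2,3$ the relevant group $V_{p,1}=1+\varpi_D\operatorname{Mat}_g(\cO_D)$ does. What is needed is that $\J^1\cap P_J^+$ is torsion-free. The paper gets this by placing it inside a $\J$-conjugate of $V_{p,1}$ and applying Lemma \ref{lemm:torsion-free}. This is the only place $p\ge5$ is used; it plays no role in the fibre computation, which works for every $p$.

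Torsion-freeness also cannot be applied at the point of your Step 2. Write $j=zu^+$ with $z=\diag([c],\ldots,[c],[c^p],\ldots,[c^p])$ the diagonal lift of the central image. Then $u^+$ is not known to have finite order until you know $z=\pm1$. So your claim that $j$ is forced into a ``finite central-type subgroup'' is unjustified as written. It could be repaired using torsion-freeness together with conjugacy of prime-to-$p$ complements under $\J\cap P_J^+$ (Schur--Zassenhaus), but you give neither.

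For the same reason, your last step must treat $j=zu^+$, not only the torus element. For $yI\in Y(w_r)$, both $y^{-1}u^+y$ and $z^{-1}y^{-1}zy$ lie in $P_J^+$, and they act on the fibre $\{yu(x)sI\}$ by a translation. Hence $j$ acts by $x\mapsto c^{1-p}(x+b)$ (respectively $c^{p-1}(x+b)$), and fixing $x=0,1$ gives $c^{p-1}=1$.

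The group theory is also simpler than you anticipate; no multi-factor reductive quotient occurs. For even $g$, the very special $J=\tS\setminus\{s_{g/2}\}$ gives $\Sp_g(\mathbb F_{p^2})$, whose centre is $\{\pm1\}$, so no fibre argument is needed. For odd $g$, $J=\tS\setminus\{s_0,s_g\}$ gives $\mathrm U_g(\mathbb F_p)$, whose centre consists of the $c$ with $c^{p+1}=1$. Here the single last step $w_{r-1}\to w_r$ suffices, since Theorem \ref{simple}(ii) forces $s\in\{s_0,s_g\}$ there. Then $c^{p-1}=c^{p+1}=1$ gives $c=\pm1$, so $u^+=\pm j$ has finite order, and torsion-freeness gives $u^+=1$.
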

In this paper, we prove this theorem in various ways. See Remark \ref{Oort conjecture from a-number loci}.

We have $\J\cong\GSp_g(D)$, where $D$ is the division algebra over $\Q_p$ of invariant $\frac{1}{2}$.
Let $\cO_D$ be the unique maximal order of $D$ and let $\varpi_D$ be a uniformizer of $\cO_D$.
A key ingredient in the proof is the following lemma, proved in \cite[Lemma 6.16]{KY24}.
\begin{lemm}
\label{lemm:torsion-free}
Let $g\geq 1$ and $s\geq 1$ be integers, and set $V_{p,s}\coloneqq 1+\varpi_D^s\operatorname{Mat}_g(\cO_D)\subseteq\GL_g(\cO_D)$. Then $V_{p,s}$ is torsion-free if and only if either (1) $s\geq 3$, (2) $p\geq 3$ and $s=2$, or (3) $p\geq 5$ and $s=1$.
\end{lemm}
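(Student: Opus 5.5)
The plan is to separate the two directions: a valuation estimate on the binomial expansion of $(1+y)^p$ gives torsion-freeness, and explicit torsion elements, already present for $g=1$, give the converse.

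Normalize the valuation $v$ on $D$ by $v(\varpi_D)=1$, so that $v(p)=2$. For a matrix $y\in\operatorname{Mat}_g(D)$, let $v(y)$ be the minimum of the valuations of its entries. This satisfies $v(y_1y_2)\geq v(y_1)+v(y_2)$ and $v(y_1+y_2)\geq\min(v(y_1),v(y_2))$. It also satisfies $v(ay)=v(a)+v(y)$ for $a\in\Q_p$, since $\Q_p$ is central and $v$ is multiplicative on $D$.

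First, torsion in $V_{p,s}$ can only have $p$-power order. The reason is that $V_{p,s}$ is a pro-$p$ group: each successive quotient $(1+\varpi_D^n\operatorname{Mat}_g(\cO_D))/(1+\varpi_D^{n+1}\operatorname{Mat}_g(\cO_D))$ is isomorphic to the additive group $\operatorname{Mat}_g(\F_{p^2})$, which is elementary abelian. So if $V_{p,s}$ has nontrivial torsion, some power of a torsion element gives $x=1+y$ with $y\neq0$, $x^p=1$, and $t\coloneqq v(y)\geq s$. Expanding,
$$0=x^p-1=py+\sum_{k=2}^{p-1}\binom pk y^k+y^p .$$
Here $v(py)=t+2$ exactly. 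Each middle term has valuation at least $2+kt\geq 2+2t>t+2$, and $v(y^p)\geq pt$. Hence if $pt>t+2$, i.e.\ $(p-1)t>2$, the term $py$ cannot be cancelled by the others, which is a contradiction. Since $t\geq s$, the condition $(p-1)s>2$ suffices. This holds exactly in cases (1) $s\geq3$, (2) $p\geq3$ and $s=2$, and (3) $p\geq5$ and $s=1$, which proves the "if" direction.

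For the converse I would exhibit torsion in the remaining cases. It is enough to do this for $g=1$ and embed via $\diag(x,1,\ldots,1)$, since this matrix lies in $V_{p,s}$ for $g\geq1$. Because $V_{p,s}\supseteq V_{p,s'}$ for $s\leq s'$, only two cases remain.

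For $p=2$ and $s=2$: take $x=-1=1+(-2)$. Then $v(-2)=2$, so $x\in V_{2,2}$ has order $2$; this also covers $p=2$, $s=1$.

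For $p=3$ and $s=1$: the extension $\Q_3(\zeta_3)$ is a ramified quadratic extension of $\Q_3$, so it embeds in $D$. The element $\zeta_3-1$ is a uniformizer of it, hence has $v(\zeta_3-1)=1$ in $D$. Therefore $\zeta_3\in V_{3,1}$ has order $3$.

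The main point needing care is the valuation estimate in the non-commutative matrix setting: one must check that $v(py)=v(y)+2$ exactly. This uses only that $p$ is central and that $v$ is multiplicative on $D$, so I do not expect a real obstacle.
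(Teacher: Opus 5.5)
Your proof is correct and complete. The paper does not prove this lemma itself; it imports it from \cite[Lemma 6.16]{KY24}. So there is no in-paper argument to compare with, and your valuation argument works as a self-contained replacement.

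For the \emph{if} direction, the key estimates all hold. In the expansion of $(1+y)^p$, the term $py$ has valuation exactly $v(y)+2$. The middle binomial terms have valuation at least $2+2v(y)$. The last term satisfies $v(y^p)\geq p\,v(y)$. This gives the clean criterion $(p-1)s>2$, which matches the three listed cases exactly.

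For the converse, your examples are valid.
\begin{itemize}
\item $-1\in V_{2,2}$ (and hence $-1\in V_{2,1}$).
\item $\zeta_3\in V_{3,1}$. Here $\Q_3(\zeta_3)$ is a ramified quadratic field, which embeds in $D$, and $v(\zeta_3-1)=v_3\bigl(N(1-\zeta_3)\bigr)=v_3(3)=1$.
\end{itemize}
These examples show the criterion is sharp. Your estimate says an element $1+y$ of order $p$ needs $v(y)\le \frac{2}{p-1}$. The examples are the roots of unity $\zeta_p$ with $p-1\mid 2$, which attain this bound exactly.

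One small simplification: you do not need the pro-$p$ structure to exclude torsion of order prime to $p$. If $x=1+y$ has prime order $\ell\neq p$, the same expansion gives $v(\ell y)=v(y)$. Every other term has valuation at least $2v(y)>v(y)$, which is already a contradiction.
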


\section{Ekedahl--Oort strata of positive Coxeter type}

For each subgroup $G\subseteq\GL_{2g}$ considered in this section, we take $T$ and $B$ to be its intersections with the diagonal torus and the Borel subgroup of upper triangular matrices in $\GL_{2g}$, respectively.
Let $\mu=(1^{(g)},0^{(g)})\in X_*(T)$, $\tau=\tau_\mu=\begin{pmatrix}0& p\cdot 1_g\\1_g&0\end{pmatrix}\in\Omega$ and $\J=\J_\tau$.
For the computation of very special parahoric subgroups, see \cite[Proposition 2.2.5]{HZZ21}. See also Remark \ref{positive Coxeter full support}. For $x\in\overline{\mathbb F}_p$, let $[x]\in\breve{\Z}_p$ denote its Teichm\"uller lift.
\subsection{The case of $\GL_{2g}$}
Let $G=\GL_{2g}$.
The goal of this subsection is to prove the following theorem.
\begin{theo}
\label{main theo GL}
Assume that $w\in\SAdm(\mu)$ has positive Coxeter part, $\supp_\sigma(w)=\tS$, $g\geq 3$ and $p\geq 5$.
There exists an open dense subscheme $U\subseteq\pi(X_w(\tau))$ such that, for every closed point $xK\in U$, if $j\in\J$ has finite order and $jxK=xK$, then $j$ is a unit-root scalar in $\Z_p^\times$.
\end{theo}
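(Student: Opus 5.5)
We first reduce to a single irreducible component with explicit stabilizer and a single parahoric quotient. By Theorem \ref{simple} there is a chain $w=w_0,w_1,\ldots,w_r$ of $\J$-equivariant maps $X_w(\tau)\to X_{w_r}(\tau)$. Each map is a universal homeomorphism or an $\A^1$-fibration up to universal homeomorphism, and $w_r$ is a $\sigma$-Coxeter element with $J\coloneqq\supp_\sigma(w_r)\subsetneq\tS$ such that $\J\cap P_J$ is very special. For $\GL_{2g}$ with $\tau$ of this form, the very special parahoric of $\J\cong\GL_g(D)$ is, after conjugation, $\GL_g(\cO_D)$, and $J$ is the $\tau$-stable subset whose reductive quotient satisfies $\overline P_J^{\Ad(\tau)\circ\sigma}\cong\GL_g(\F_{p^2})$. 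By Corollary \ref{projection universal homeomorphism}, $X_w(\tau)\to\pi(X_w(\tau))$ is a universal homeomorphism. Hence by Proposition \ref{spherical} the irreducible components of $\pi(X_w(\tau))$ form one $\J$-orbit, and we may work with one component $C$ whose stabilizer is $\J\cap P_J$. Its composite projection $C\to Y(w_r)\subseteq P_J/I$ is $\J\cap P_J$-equivariant and surjective.

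Next we choose $U$. Since $C$ is quasi-compact, Lemma \ref{finite quotient} shows that $\J\cap P_J$ acts on $C$ through a finite quotient. So only finitely many conjugacy classes of "nontrivial" elements act nontrivially, and their fixed loci are closed proper subsets unless the element acts trivially on $C$. It therefore suffices to show the following: if $j\in\J\cap P_J$ has finite order and fixes $C$ pointwise, then $j\in\Z_p^\times$. Granting this, let $U\cap C$ be the complement of the finitely many proper closed fixed loci of the elements of the finite quotient that act nontrivially, and spread $U$ over the $\J$-orbit of $C$. This is legitimate because components are disjoint and the union is locally finite. Any finite-order $j$ stabilizing $xK\in U\cap C$ preserves $C$, so $j\in\J\cap P_J$, and by the construction of $U$ it fixes $C$ pointwise.

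Now let $j$ fix $C$ pointwise. Then its image $\bar j\in\overline P_J^{\Ad(\tau)\circ\sigma}$ fixes $Y(w_r)$ pointwise, since the projection is surjective and equivariant. By Proposition \ref{center}, $\bar j$ is central, so $j\in z(\J\cap P_J^+)$ for a Teichmüller scalar $z=[\zeta]$ with $\zeta\in\F_{p^2}^\times$ central. Under $\J\cap P_J\cong\GL_g(\cO_D)$, the group $\J\cap P_J^+$ is $V_{p,1}$, which is torsion-free for $p\ge5$ by Lemma \ref{lemm:torsion-free}. Since $z$ is central of finite order, $z^{-1}j$ has finite order, so $z^{-1}j=1$ and $j=z$ is a scalar in $\cO_{\breve{\Q}_p}$ lying in $\J$. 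It remains to show that $z\in\Z_p^\times$, i.e.\ that $\zeta\in\F_p^\times$ rather than merely $\F_{p^2}^\times$ (the center of $D$ is $\Q_p$).

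This last step is the main obstacle. A scalar $[\zeta]$ with $\zeta\in\F_{p^2}\setminus\F_p$ lies in $\cO_D$ as a non-central element of $\J$. It is central in $\overline P_J$ but acts nontrivially on the affine-space directions, so it must be ruled out using the fibre coordinates and not just $Y(w_r)$. The plan is an explicit root-subgroup computation. Take a step in the reduction chain where $\ell$ drops by $2$, for instance the last one $w_{r-1}\to w_r$ with $s\notin J$. The $\A^1$-fibre there is parametrized by $x\mapsto x\,U_{\alpha}([t])$-type coordinates for the affine root $\alpha$ corresponding to $s$, which lies outside $J$, so it is not in $P_J$. Conjugating by $[\zeta]$ scales $t$ by $\zeta\sigma^{k}(\zeta)^{-1}$ for the appropriate index $k$, namely $\zeta^{1-p}$, because the two blocks of $\tau$ are interchanged by $\sigma$. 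Fixing all $t$ forces $\zeta^{p-1}=1$, i.e.\ $\zeta\in\F_p^\times$. The hypothesis $g\ge3$ is expected to be needed to guarantee that the support condition $\supp_\sigma(w)=\tS$ produces such a fibre direction pairing indices from both $\tau$-blocks, which is where the care is required.
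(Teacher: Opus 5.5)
\textbf{There is a genuine gap at the step where you conclude $j=z$.}

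The lift of a central element $\zeta\in\GL_g(\F_{p^2})$ is $z=\diag([\zeta],\ldots,[\zeta],[\zeta^p],\ldots,[\zeta^p])$. This is the scalar $[\zeta]\in\mathbb Z_{p^2}\subset\cO_D$ in $\GL_g(\cO_D)$. It is a scalar matrix, and central in $\J$, only when $\zeta\in\F_p^\times$. You observe this yourself in your last paragraph, which contradicts your earlier use of ``$z$ is central''.

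For $\zeta\notin\F_p$, $z$ does not commute with $\J\cap P_J^+\cong V_{p,1}$, and $z^{-1}j$ need not have finite order. Take $h=1+\varpi_D\cdot 1_g\in V_{p,1}$ and $j=hzh^{-1}$. Then $j$ has finite order and $\overline j=\zeta$. However, $z^{-1}j=(1+[\zeta^{p-1}]\varpi_D)(1+\varpi_D)^{-1}$ is a nontrivial element of $V_{p,1}$, hence of infinite order. So torsion-freeness of $V_{p,1}$ can only be used after you know $\zeta\in\F_p^\times$.

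Your final paragraph derives $\zeta^{p-1}=1$ by letting $z$ alone act on a fibre, so it presupposes $j=z$. As written, the argument is circular. Even for $z$ alone, the fibre lies over a point $yI$ with $y\in P_J$ not commuting with $z$, so $z$ acts by an affine map rather than a pure scaling.

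\textbf{The fix.} Run the fibre computation for $j=zu^+$ with arbitrary $u^+\in\J\cap P_J^+$; this is what the paper does.
\begin{enumerate}[(1)]
\item After conjugating so that $J=\tS\setminus\{s_0,s_g\}$, the last step of Theorem~\ref{simple} has $s\in\{s_0,s_g\}$. The fibres are $f^{-1}(yI)=\{yu(x)sI\}$, with $u(x)=1_{2g}+p[x]E_{1,2g}$ or $1_{2g}+[x]E_{g+1,g}$.
\item Since $y\in P_J$ and $P_J^+\trianglelefteq P_J$, both $y^{-1}u^+y$ and $z^{-1}y^{-1}zy$ lie in $P_J^+$.
\item An element of $P_J^+$ moves the fibre coordinate only by a translation $x\mapsto x+b$, with $b$ read off from its $(1,2g)$- resp.\ $(g+1,g)$-entry. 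On the other hand, $zu(x)z^{-1}=u(c^{1-p}x)$ resp.\ $u(c^{p-1}x)$.
\item Hence $j$ acts on the fibre by $x\mapsto c^{\pm(1-p)}(x+b'')$, with $b''$ independent of $x$. Fixing $x=0$ and $x=1$ forces $c^{p-1}=1$.
\item Only now is $z=[c]\in\Z_p^\times$ central. So $u^+=z^{-1}j$ has finite order, and it is trivial by Lemma~\ref{lemm:torsion-free}.
\end{enumerate}

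The rest of your reduction matches the paper: the choice of $U$ via Lemma~\ref{finite quotient}, the passage to $Y(w_r)$, and the use of Proposition~\ref{center}. Also, the hypothesis $g\ge3$ plays no role in locating the fibre direction. That direction comes directly from Theorem~\ref{simple}(ii), since $s\notin J$.
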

By Remark \ref{dense Coxeter stratum}, this is a generalization of \cite[Theorem 5.1]{Viehmann26}. Note that the unit-root scalars in $\Z_p^\times$ are the Teichm\"uller lifts of the elements of $\mathbb F_p^\times$.

We have $\J\cong\GL_g(D)$.
For $1\leq i<2g$, let $s_i$ be the permutation matrix associated with $(i\ i+1)$, and set
$$s_0=\begin{pmatrix}0&0&p\\0&1_{2g-2}&0\\p^{-1}&0&0\end{pmatrix}.$$
We denote their images in $\tW$ by the same symbols.
Then $\bS=\{s_1,s_2,\ldots,s_{2g-1}\}$ and $\tS=\bS\cup\{s_0\}$.
Moreover, $\tau s_i\tau^{-1}=s_{i+g}$, where the subscript is taken modulo $2g$.

\begin{lemm}
\label{lemm:GLveryspecial}
Assume that $J$ is a very special subset of $\tS$ with respect to $\tau$; equivalently, for some integer $0\leq i<g$, we have $J=\tS\setminus\{s_i,s_{i+g}\}$. As a connected reductive group over $\mathbb F_p$, $\overline P_J$ is isomorphic to
$\operatorname{Res}_{\mathbb F_{p^2}/\mathbb F_p}\GL_g.$
\end{lemm}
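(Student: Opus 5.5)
The plan is to compute everything directly from the affine Dynkin diagram of $\GL_{2g}$ and the explicit Frobenius $\Ad(\tau)\circ\sigma$. The affine Dynkin diagram is a $2g$-cycle $s_0,s_1,\ldots,s_{2g-1}$. Conjugation by $\tau$ acts on it as rotation by $g$, since $\tau s_i\tau^{-1}=s_{i+g}$. The parenthetical characterization is taken as given from \cite[Proposition 2.2.5]{HZZ21}: the very special subsets are $J=\tS\setminus\{s_i,s_{i+g}\}$ with $0\le i<g$.

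First I would identify $\overline P_J$ over $\aFq$. Removing the two opposite vertices $s_i,s_{i+g}$ from the $2g$-cycle leaves two chains of $g-1$ vertices each, of types $A_{g-1}$. Concretely, conjugating by a suitable element of $\Omega$ (a power of the length-zero rotation $\tau_{(1,0^{(2g-1)})}$), which commutes with the structure up to relabelling, I reduce to $i=0$. Then $P_J$ is the stabilizer of the lattice chain $\cO^{2g}\supset \cO^g\oplus\vp\cO^g$, where the second lattice is chosen compatibly with the lower-triangular Iwahori convention. Its reductive quotient is $\GL_g\times\GL_g$, acting on the two graded pieces $\cO^{2g}/(\cO^g\oplus\vp\cO^g)$ and $(\cO^g\oplus\vp\cO^g)/\vp\cO^{2g}$. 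Equivalently, it is the Levi of block-diagonal matrices after reduction mod $\vp$.

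Next I would compute the Frobenius $\delta=\Ad(\tau)\circ\sigma$ on $\overline P_J=\GL_g\times\GL_g$. Since $\tau=\begin{pmatrix}0&p\\1&0\end{pmatrix}$, it maps the lattice $\cO^g\oplus p\cO^g$ into a lattice swapping the roles of the two blocks. Explicitly, $\tau\,\diag(A,B)\,\tau^{-1}=\diag(B,A)$ on the block-diagonal part, and the $p$-entries cancel. Hence $\delta(A,B)=(\sigma(B),\sigma(A))$ on $\GL_g\times\GL_g$, which is well defined because $P_J$ is $\tau$-stable. The fixed points satisfy $B=\sigma(A)$ and $A=\sigma^2(A)$, giving $\GL_g(\F_{p^2})$. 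More precisely, a pair of factors swapped by Frobenius is by definition the $\F_p$-form $\operatorname{Res}_{\F_{p^2}/\F_p}\GL_g$: the map $(A,B)\mapsto A$ identifies $\overline P_J$ with $\GL_g\times{}^\sigma\GL_g$, with Frobenius composed of the swap.

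The main care point is the reduction to $i=0$ and the correct identification of the graded pieces for general $i$. For general $i$, the quotient is again two copies of $\GL_g$, on the pieces between the lattices $\Lambda_i$ and $\Lambda_{i+g}$ of the standard chain. The key fact is that $\tau$ maps $\Lambda_k$ to $\Lambda_{k+g}$ up to the appropriate scaling, so $\tau$ always swaps the two pieces. The argument then goes through verbatim without conjugating. I would present it in this uniform form to avoid checking that the $\Omega$-conjugation commutes with $\tau$, which it does because $\Omega\cong\Z$ is abelian.
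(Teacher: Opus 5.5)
Your proposal is correct and is essentially the paper's argument. The paper notes that deleting $\{s_i,s_{i+g}\}$ gives the root datum of $\GL_g\times\GL_g$ and that $\Ad(\tau)\circ\sigma$ swaps the two factors; you verify the same facts explicitly via $\tau\,\diag(A,B)\,\tau^{-1}=\diag(B,A)$ and the lattice chain.

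One small convention slip: with the paper's lower-triangular Iwahori, $P_J$ for $i=0$ is the stabilizer of $\cO^{2g}\supset\vp\cO^g\oplus\cO^g$, matching the displayed $P_J^+$, not of $\cO^{2g}\supset\cO^g\oplus\vp\cO^g$. Only for the former does $\tau\colon(u,v)\mapsto(\vp v,u)$ carry the chain to itself up to $\vp$, which is what your uniform version with $\tau\Lambda_k=\Lambda_{k+g}$ correctly uses.
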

\begin{proof}
The absolute root datum obtained by deleting $\{s_i,s_{i+g}\}$ is that of $\GL_g\times\GL_g$. Since $\Ad(\tau)\circ\sigma$ exchanges the two factors, the assertion follows immediately.
\end{proof}

Writing $D=\mathbb Q_{p^2}\oplus\varpi_D\mathbb Q_{p^2}$ and $\cO_D=\mathbb Z_{p^2}\oplus\varpi_D\mathbb Z_{p^2}$, with $\varpi_D^2=p$ and $\varpi_Da=\sigma(a)\varpi_D$ for $a\in\mathbb Q_{p^2}$, the isomorphism $\GL_g(D)\xrightarrow{\sim}\J$ is given by
$$X+\varpi_DY\longmapsto\begin{pmatrix}X&p\sigma(Y)\\Y&\sigma(X)\end{pmatrix}$$
where $X,Y\in\operatorname{Mat}_g(\mathbb Q_{p^2})$.
For $J=\tS\setminus\{s_0,s_g\}$, this isomorphism identifies $\J\cap P_J$ with $\GL_g(\cO_D)$ and $\J\cap P_J^+$ with $V_{p,1}$. In this case, we have
$$P_J^+=\left\{\begin{pmatrix}1_g+pA&pB\\ C&1_g+pD\end{pmatrix}\mathrel{\Big|}A,B,C,D\in\operatorname{Mat}_g(\breve{\Z}_p)\right\}.$$
For $J=\tS\setminus\{s_i,s_{i+g}\}$, the groups $\J\cap P_J$ and $\J\cap P_J^+$ are conjugate in $\J$ to $\GL_g(\cO_D)$ and $V_{p,1}$, respectively.
For $s\in\{s_0,s_g\}$ and $x\in\overline{\mathbb F}_p$, set $u(x)=1_{2g}+p[x]E_{1,2g}$ if $s=s_0$ and $u(x)=1_{2g}+[x]E_{g+1,g}$ if $s=s_g$, where $E_{k,l}$ is the matrix whose $(k,l)$-entry is $1$ and whose other entries are $0$.

\begin{proof}[Proof of Theorem \ref{main theo GL}]
By Corollary \ref{projection universal homeomorphism} and the $\J$-equivariance of $\pi$, it suffices to prove the same assertion for $X_w(\tau)$.
Let $w_1,\ldots,w_r$ be as in Theorem \ref{simple}.
By Proposition \ref{spherical} and Theorem \ref{simple}, there is an irreducible component $C$ of $X_w(\tau)$ whose stabilizer in $\J$ is $\J\cap P_J$, where $J=\supp_\sigma(w_r)$. Since $J$ is very special with respect to $\tau$, we have $J=\tS\setminus\{s_i,s_{i+g}\}$ for some $0\leq i<g$.
Set $\omega=\begin{pmatrix}0&p\\1_{2g-1}&0\end{pmatrix}$, so that $\omega^g=\tau$, $\omega^{-i}s_i\omega^i=s_0$, and $\omega^{-i}s_{i+g}\omega^i=s_g$. For each $0\leq k\leq r$, conjugation by $\omega^{-i}$ induces an isomorphism
$$X_{w_k}(\tau)\xrightarrow{\sim}X_{\omega^{-i}w_k\omega^i}(\tau),\qquad xI\longmapsto\omega^{-i}x\omega^iI.$$
This isomorphism is not $\J$-equivariant, but it identifies the action of $j\in\J$ with that of $\omega^{-i}j\omega^i\in\J$.
Replacing $X_{w_k}(\tau)$ by $X_{\omega^{-i}w_k\omega^i}(\tau)$ for $0\leq k\leq r$, we may therefore assume that $i=0$ and hence $J=\tS\setminus\{s_0,s_g\}$.

Since $\J$ acts transitively on the irreducible components of $X_w(\tau)$, it suffices to find a non-empty open subscheme $U\subseteq C$ such that, for every closed point $xI\in U$, if $j\in\J$ has finite order and $jxI=xI$, then $j$ is a unit-root scalar in $\Z_p^\times$.
Note that every scalar in $\Z_p^\times$ fixes $C$ pointwise.
For each $j\in\J\cap P_J$, the locus $\{xI\in C\mid jxI=xI\}$ is closed in $C$. Indeed, locally closed subvarieties of the affine flag variety are separated.
By Lemma \ref{finite quotient}, the action of $\J\cap P_J$ on $C$ factors through a finite quotient. Thus, it remains to show that if $j\in\J\cap P_J$ has finite order and fixes $C$ pointwise, then $j$ is a unit-root scalar in $\Z_p^\times$.

Let $j$ be such an element. By Proposition \ref{spherical} and Theorem \ref{simple}, the element $j$ fixes $Y(w_r)$ pointwise, where $Y(w_r)=\{xI\in P_J/I\mid x^{-1}\tau\sigma(x)\in Iw_rI\}$. Let $\overline j\in \GL_g(\mathbb F_{p^2})$ be the image of $j$ under the isomorphisms
$$(\J\cap P_J)/(\J\cap P_J^+)\cong(\overline P_J)^{\Ad(\tau)\circ\sigma}\cong\GL_g(\mathbb F_{p^2}).$$
By Proposition \ref{center}, we have $\overline j=c$ for some $c\in\mathbb F_{p^2}^\times$.
Set $z=\diag([c],\ldots,[c],[c^p],\ldots,[c^p])$, where each of $[c]$ and $[c^p]$ occurs $g$ times.
Then $z\in\J\cap T(\breve{\Z}_p)\subseteq\J\cap P_J$, and its image under the above isomorphism is $c$. Hence $j=zu^+$ for some $u^+\in\J\cap P_J^+$.

Let $s\in\tS$ be the element corresponding to $k=r-1$ in Theorem \ref{simple} (i), so that $w_r=sw_{r-1}s$ and $\ell(w_r)=\ell(w_{r-1})-2$. By Theorem \ref{simple} (ii), we have $s=s_0$ or $s=s_g$. The map $x\mapsto u(x)sI$ defines an isomorphism $\mathbb A^1\xrightarrow{\sim}IsI/I$. Let $f\colon X_{w_{r-1}}(\tau)\rightarrow X_{w_r}(\tau)$ be the morphism induced by Proposition \ref{DL method prop}. For $yI\in Y(w_r)$, we have
$$f^{-1}(yI)=\{yu(x)sI\mid x\in\overline{\mathbb F}_p\}.$$
Note that the right-hand side is independent of the choice of $y$, and $yu(x)sI=yu(x')sI$ if and only if $x=x'$.
By assumption and Theorem \ref{simple}, $j$ fixes $f^{-1}(yI)$ pointwise.
Since $P_J^+$ is normal in $P_J$ and the image of $z$ in $P_J/P_J^+$ is central, both $y^{-1}u^+y$ and $z^{-1}y^{-1}zy$ lie in $P_J^+$. Hence
$$y^{-1}u^+y=\begin{pmatrix}1_g+pA&pB\\ C&1_g+pD\end{pmatrix},\qquad z^{-1}y^{-1}zy=\begin{pmatrix}1_g+pA'&pB'\\ C'&1_g+pD'\end{pmatrix}$$
for some $A,B,C,D,A',B',C',D'\in\operatorname{Mat}_g(\breve{\Z}_p)$.
Thus
$$u^+yu(x)sI=
\begin{cases}
yu(x+\overline{b_{1,g}})sI & \text{if }s=s_0,\\
yu(x+\overline{c_{1,g}})sI & \text{if }s=s_g,
\end{cases}$$
where $\overline{b_{1,g}}$ and $\overline{c_{1,g}}$ are the reductions modulo $p$ of the $(1,g)$-entries of $B$ and $C$, respectively.
Moreover, $zu(x)z^{-1}=u(c^{1-p}x)$ if $s=s_0$ and $zu(x)z^{-1}=u(c^{p-1}x)$ if $s=s_g$. Therefore
$$zyu(x)sI=
\begin{cases}
yu(c^{1-p}(x+\overline{b'_{1,g}}))sI & \text{if }s=s_0,\\
yu(c^{p-1}(x+\overline{c'_{1,g}}))sI & \text{if }s=s_g,
\end{cases}$$
where $\overline{b'_{1,g}}$ and $\overline{c'_{1,g}}$ are the reductions modulo $p$ of the $(1,g)$-entries of $B'$ and $C'$, respectively. It follows that
$$jyu(x)sI=
\begin{cases}
yu(c^{1-p}(x+\overline{b_{1,g}}+\overline{b'_{1,g}}))sI & \text{if }s=s_0,\\
yu(c^{p-1}(x+\overline{c_{1,g}}+\overline{c'_{1,g}}))sI & \text{if }s=s_g.
\end{cases}$$
Note that $\overline{b_{1,g}}$, $\overline{c_{1,g}}$, $\overline{b'_{1,g}}$ and $\overline{c'_{1,g}}$ are independent of $x$.
Since $j$ fixes $f^{-1}(yI)$ pointwise, we have, for every $x\in\overline{\mathbb F}_p$,
$$\begin{aligned}
c^{1-p}(x+\overline{b_{1,g}}+\overline{b'_{1,g}})&=x &&\text{if }s=s_0,\\
c^{p-1}(x+\overline{c_{1,g}}+\overline{c'_{1,g}})&=x &&\text{if }s=s_g.
\end{aligned}$$
The equations for $x=0$ and $x=1$ imply that $c^{p-1}=1$, and hence $c\in\mathbb F_p^\times$.

Thus $z=[c]\in\Z_p^\times$, and $u^+=z^{-1}j$ has finite order. Since $\J\cap P_J^+$ is torsion-free by $p\ge 5$ and Lemma \ref{lemm:torsion-free}, we have $u^+=1$ and $j=z\in\Z_p^\times$ as desired.
\end{proof}

\subsection{The case of $\GSp_{2g}$}
Let $G=\GSp_{2g}$.
The main goal of this subsection is to prove the following theorem.
\begin{theo}
\label{main theo}
Assume that $w\in\SAdm(\mu)$ has positive Coxeter part, $\supp_\sigma(w)=\tS$, $g\ge 3$ and $p\geq 5$.
There exists an open dense subscheme $U\subseteq\pi(X_w(\tau))$ such that, for every closed point $xK\in U$, if $j\in \J^1$ has finite order and $jxK=xK$, then $j=\pm 1$.
\end{theo}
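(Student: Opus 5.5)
The plan mirrors the proof of Theorem \ref{main theo GL}. By Corollary \ref{projection universal homeomorphism} and the $\J$-equivariance of $\pi$, it suffices to work on $X_w(\tau)$. Let $w_0=w,w_1,\ldots,w_r$ be as in Theorem \ref{simple}, and set $J=\supp_\sigma(w_r)$, which is very special with respect to $\tau$.

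\textbf{Reduction to a fixed component.} First I would identify the very special $\tau$-stable subsets of $\tS$ for $\GSp_{2g}$. The simple affine reflections are $s_0,\ldots,s_g$ (type $\tilde C_g$), and $\tau$ acts by $s_i\mapsto s_{g-i}$. I expect $J=\tS\setminus\{s_0,s_g\}$ up to the length-zero element normalizing the Iwahori, and I would normalize to this case by conjugating, as was done with $\omega$ for $\GL_{2g}$. In this case $\overline P_J$ is $\GL_g$ (times similitudes), with Frobenius $\Ad(\tau)\circ\sigma$ giving a unitary-type form. Thus $(\J\cap P_J)/(\J\cap P_J^+)\cong\overline P_J^{\Ad(\tau)\circ\sigma}$ is a finite (similitude) unitary group $U_g(\F_{p^2})$-type group, and $\J^1\cap P_J^+\subseteq V_{p,1}$. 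Following \cite{HZZ21}, I would record this identification and the explicit shape of $P_J^+$ as block matrices, which is the $\GL_{2g}$ description intersected with $\GSp_{2g}$.

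\textbf{Reduction to a fixed-point statement.} As before, the components are permuted transitively by $\J$. By Lemma \ref{finite quotient} together with the closedness of fixed loci, it is enough to show the following: if $j\in\J^1\cap P_J$ has finite order and fixes the component $C$ pointwise, then $j=\pm1$. Such a $j$ fixes $Y(w_r)$ pointwise, so by Proposition \ref{center} its image $\overline j$ is central. Hence $\overline j$ is a scalar $c$ satisfying a unitarity constraint, namely $c^{p+1}=1$ since the similitude is $1$. I would lift it to $z=\diag([c]^{(g)},[c^p]^{(g)})\in\J^1\cap T(\breve{\Z}_p)$ and write $j=zu^+$ with $u^+\in\J^1\cap P_J^+$.

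\textbf{Key step: the last reduction step.} Let $s\in\{s_0,s_g\}$ be the reflection with $w_r=sw_{r-1}s$ (Theorem \ref{simple}(ii)). The fiber of $X_{w_{r-1}}(\tau)\to X_{w_r}(\tau)$ over $yI\in Y(w_r)$ is $\{yu(x)sI\}$, where $u(x)$ is now the symplectic root subgroup element. For $s_0$ this is $1+p[x]E_{1,2g}$, attached to the long root $2e_1$; for $s_g$ it is $1+[x]E_{g+1,g}$, the long root $-2e_g$. The same computation as for $\GL_{2g}$ gives $jyu(x)sI=yu(c^{\pm(1-p)}(x+\beta))sI$ with $\beta$ independent of $x$. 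Since $j$ fixes the fiber, evaluating at $x=0,1$ forces $c^{p-1}=1$. Combined with $c^{p+1}=1$, this gives $c^2=1$, so $c=\pm1$ and $z=\pm1$. Then $u^+=\pm j$ has finite order and lies in $\J^1\cap P_J^+\subseteq V_{p,1}$, which is torsion-free by Lemma \ref{lemm:torsion-free} since $p\ge5$. Hence $j=\pm1$.

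\textbf{Main obstacle.} I expect the main obstacle to be the normalization step: checking that every very special $J$ is conjugate, by a length-zero element in $\GSp_{2g}$ commuting appropriately with $\tau$, to $\tS\setminus\{s_0,s_g\}$. The $\GSp_{2g}$ Iwahori--Weyl group has only $\Omega\cong\Z$ generated by $\tau$ itself, so this may fail. In that case I would treat the other very special subsets directly, with a different lift $u(x)$ and a different reductive quotient. The second point to verify is that the correction term $\beta$ is still independent of $x$ for the long root, which I expect by normality of $P_J^+$, exactly as in the $\GL_{2g}$ case.
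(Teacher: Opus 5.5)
For odd $g$ your argument is the paper's: $\tS\setminus\{s_0,s_g\}$ is then the unique very special subset, $\overline P_J^1\cong\mathrm U_g$, and the computation on the fibre over $Y(w_r)$ forces $c^{p-1}=1$, hence $c=\pm1$.

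The gap is the case of even $g$. Your main argument does not cover it, and your fallback (``treat the other very special subsets directly'') leaves it unproved. Your suspicion about the normalization is correct, and it cannot be repaired. $\Omega$ is generated by $\tau$, and conjugation by $\tau$ acts on $\tS$ by $s_i\mapsto s_{g-i}$. So every element of $\Omega$ fixes every $\tau$-stable subset of $\tS$, and no length-zero conjugation can move $J$. For even $g$ the very special subset is $J=\tS\setminus\{s_{g/2}\}$, and $\tS\setminus\{s_0,s_g\}$ is not very special. Your key step then does not apply at all. By Theorem \ref{simple}(ii), the reflection $s$ in the last reduction step satisfies $s\notin J$, so $s=s_{g/2}$ rather than $s_0$ or $s_g$.

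The missing case is in fact the easier one and needs no fibre computation. By Lemma \ref{lemm:GSp-red-quot} with $i=g/2$, $\overline P_J^1\cong\operatorname{Res}_{\mathbb F_{p^2}/\mathbb F_p}\Sp_g$, so $(\overline P_J^1)^{\Ad(\tau)\circ\sigma}\cong\Sp_g(\mathbb F_{p^2})$, whose center is $\{\pm1\}$. Proposition \ref{center} then gives $\overline j=\pm1$ directly, so $j=\pm u^+$ with $u^+\in\J^1\cap P_J^+$ of finite order.

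This group is torsion-free for $p\ge5$. It lies in the pro-unipotent radical attached to the $\GL_{2g}$-subset $\tS\setminus\{s_{g/2},s_{3g/2}\}$, which is very special and hence conjugate to $V_{p,1}$. Thus $u^+=1$ and $j=\pm1$.

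You should make the case split explicit. For odd $g$, use the unitary quotient together with the fibre argument. For even $g$, use the symplectic quotient and the fact that its center is $\{\pm1\}$.
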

By Remark \ref{dense Coxeter stratum}, this is a generalization of \cite[Theorem 2.2]{Viehmann26}.
The set $\SAdm(\mu)$ parametrizes the global Ekedahl--Oort strata, and the intersection of the stratum attached to $w$ with the supersingular locus is uniformized by $\pi(X_w(\tau))$ (cf.\ \cite[\S3.2 \& \S5.1]{Wang21}).
In particular, this intersection is non-empty if and only if $X_w(\tau)\neq\emptyset$ (cf.\ \cite[Lemma 7.6]{GHN19}).
The Ekedahl--Oort stratum attached to $w$ is contained in the supersingular locus if and only if $\supp_\sigma(w)\subsetneq\tS$ by \cite[Proposition 5.2]{Wang21}.

For a $\Q_p$-algebra $R$, define
$$J_\tau^1(R)=\{j\in\Sp_{2g}(R\otimes_{\Q_p}\bQp)\mid j^{-1}\tau\sigma(j)=\tau\},$$
where $\sigma$ acts trivially on $R$.
This defines a reductive group $J_\tau^1$ over $\Q_p$, which is an inner form of $\Sp_{2g}$, and $\J^1=J_\tau^1(\Q_p)$.

Assume that $J\subsetneq\tS$ is $\tau$-stable, and let $P_J$ be the standard parahoric subgroup of $G=\GSp_{2g}$ attached to $J$.
The groups $P_J\cap\Sp_{2g}(\bQp)$ and $P_J^+\cap\Sp_{2g}(\bQp)$ are respectively the standard parahoric subgroup of $\Sp_{2g}(\bQp)$ attached to $J$ and its pro-unipotent radical.
Let $\overline P_J^1$ be the reductive quotient of the special fiber of $P_J\cap\Sp_{2g}(\bQp)$, so that $(P_J\cap\Sp_{2g}(\bQp))/(P_J^+\cap\Sp_{2g}(\bQp))\xrightarrow{\sim}\overline P_J^1$.
Let $\overline B_J^1\coloneqq(I\cap\Sp_{2g}(\bQp))/(P_J^+\cap\Sp_{2g}(\bQp))$, which is a Borel subgroup of $\overline P_J^1$. Thus $(P_J\cap\Sp_{2g}(\bQp))/(I\cap\Sp_{2g}(\bQp))$ is naturally identified with the flag variety $\overline P_J^1/\overline B_J^1$.
Then $\Ad(\tau)\circ\sigma$ stabilizes $P_J\cap\Sp_{2g}(\bQp)$, $P_J^+\cap\Sp_{2g}(\bQp)$, and $I\cap\Sp_{2g}(\bQp)$, and induces a Frobenius endomorphism of $\overline P_J^1$ preserving $\overline B_J^1$.
The natural surjective map $P_J\cap\Sp_{2g}(\bQp)\rightarrow\overline P_J^1$ induces an isomorphism $(\J^1\cap P_J)/(\J^1\cap P_J^+)\xrightarrow{\sim}(\overline P_J^1)^{\Ad(\tau)\circ\sigma}$.


For $1\leq i<g$, let $s_i$ be the permutation matrix associated with $(i\ i+1)(2g-i\ 2g+1-i)$.
Set
$$s_g=\begin{pmatrix}1_{g-1}&0&0&0\\0&0&1&0\\0&-1&0&0\\0&0&0&1_{g-1}\end{pmatrix},\qquad
s_0=\tau s_g\tau^{-1}=\begin{pmatrix}0&0&-p\\0&1_{2g-2}&0\\p^{-1}&0&0\end{pmatrix}.$$
We denote their images in $\tW$ by the same symbols.
Then $\bS=\{s_1,s_2,\ldots,s_g\}$ and $\tS=\bS\cup\{s_0\}$.
Moreover, $\tau s_i\tau^{-1}=s_{g-i}$ for $0\leq i\leq g$.

\begin{lemm}
\label{lemm:GSp-red-quot}
\label{reductive quotient symplectic}
Assume that $J$ is a maximal proper $\tau$-stable subset of $\tS$; equivalently, for some integer $0\leq i\leq \left\lfloor\frac{g}{2}\right\rfloor$, we have $J=\tS\setminus\{s_i,s_{g-i}\}$ if $2i<g$, and $J=\tS\setminus\{s_i\}$ if $2i=g$. As a connected reductive group over $\mathbb F_p$, $\overline P_J^1$ is isomorphic to
$$
\operatorname{Res}_{\mathbb F_{p^2}/\mathbb F_p}\Sp_{2i}\times\mathrm U_{g-2i},
$$
where $\mathrm U_n$ denotes the quasi-split unitary group associated with $\mathbb F_{p^2}/\mathbb F_p$.
\end{lemm}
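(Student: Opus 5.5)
The plan is to mirror the proof of Lemma \ref{lemm:GLveryspecial}: first compute the absolute root datum of $\overline P_J^1$ from the local Dynkin diagram of $\Sp_{2g}$, then determine the $\F_p$-structure from the action of $\Ad(\tau)\circ\sigma$.

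\textbf{Step 1: the absolute group.} The affine Dynkin diagram of $\Sp_{2g}$ is of type $\tilde C_g$. It is the chain $s_0 - s_1 - \cdots - s_{g-1} - s_g$, with double bonds at both ends ($s_0$–$s_1$ and $s_{g-1}$–$s_g$). Deleting $s_i$ and $s_{g-i}$ with $2i<g$ leaves three pieces:
\begin{itemize}
\item $\{s_0,\ldots,s_{i-1}\}$, of type $C_i$;
\item $\{s_{i+1},\ldots,s_{g-i-1}\}$, of type $A_{g-2i-1}$;
\item $\{s_{g-i+1},\ldots,s_g\}$, of type $C_i$.
\end{itemize}
Pieces may be empty when $i=0$ or $i=1$. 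Since $P_J\cap\Sp_{2g}(\bQp)$ is a parahoric of the simply connected group $\Sp_{2g}$, its reductive quotient $\overline P_J^1$ is connected. I would check its isomorphism type concretely rather than only via the diagram. The parahoric is the stabilizer in $\Sp_{2g}$ of the self-dual periodic lattice chain $\Lambda_i\subset\Lambda_{g-i}$, where $\Lambda_k$ is spanned by $p^{-1}e_1,\ldots,p^{-1}e_k,e_{k+1},\ldots,e_{2g}$ up to the standard normalisation. The reductive quotient then acts on the graded pieces of the chain:
\begin{itemize}
\item a $2i$-dimensional symplectic space (from $p^{-1}e_1,\ldots,p^{-1}e_i$ and $e_{2g+1-i},\ldots,e_{2g}$), giving $\Sp_{2i}$;
\item a second $2i$-dimensional symplectic space, giving another $\Sp_{2i}$;
\item a pair of dual $(g-2i)$-dimensional spaces, giving $\GL_{g-2i}$.
\end{itemize}
So over $\aFq$ we get $\overline P_J^1\cong\Sp_{2i}\times\Sp_{2i}\times\GL_{g-2i}$. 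In the case $2i=g$, the middle piece disappears and we get $\Sp_g\times\Sp_g$. This matches the root datum read off from the diagram, and it also settles connectedness and the centre (for instance, $\GL$ rather than $\mathrm{SL}$ in the middle).

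\textbf{Step 2: the Frobenius.} The Frobenius $\delta=\Ad(\tau)\circ\sigma$ acts on $\tS$ through $\tau$, sending $s_k\mapsto s_{g-k}$. This swaps the two $C_i$ components, which gives $\operatorname{Res}_{\F_{p^2}/\F_p}\Sp_{2i}$; this is the same argument as in Lemma \ref{lemm:GLveryspecial}. On the $A_{g-2i-1}$ component, $\delta$ induces $s_{i+k}\mapsto s_{g-i-k}$, which is the nontrivial diagram involution of type $A$. The $\GL_{g-2i}$ factor therefore becomes a non-split form of $\GL_{g-2i}$ that splits over $\F_{p^2}$. Because it preserves the Borel $\overline B_J^1$ (the image of $I$), it is quasi-split, hence $\mathrm U_{g-2i}$. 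I would confirm the unitary structure directly: $\tau$ maps the graded piece $\Lambda_{g-i}/\Lambda_i$ to its dual, and composing with $\sigma$ yields a $\sigma$-sesquilinear form on the $\F_{p^2}$-structure. When $g-2i\le 1$ the diagram argument says nothing, so I would use this direct description there: it gives $\mathrm U_1$, a nonsplit torus of rank one, or the trivial group.

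\textbf{Step 3: the equivalence.} The characterisation of maximal proper $\tau$-stable subsets is elementary. The $\tau$-orbits on $\tS$ are $\{s_i,s_{g-i}\}$. A maximal proper $\tau$-stable subset is the complement of exactly one orbit, and the orbits are indexed by $0\le i\le\lfloor g/2\rfloor$.

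\textbf{Main obstacle.} I expect the hardest part to be making the $\F_p$-form of the middle factor rigorous, rather than just citing the diagram automorphism. Two points need care: that the middle factor is really $\GL$ (its centre is a rank-one torus on which $\delta$ acts by inversion composed with Frobenius), and the degenerate cases $g-2i\le1$. The lattice-chain description handles both.
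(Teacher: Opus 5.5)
Your route is the paper's: delete $s_i,s_{g-i}$ from the $\tilde C_g$ diagram to get $\Sp_{2i}\times\GL_{g-2i}\times\Sp_{2i}$ over $\overline{\F}_p$, then read off the $\F_p$-structure from $\delta=\Ad(\tau)\circ\sigma$. However, one step fails as written.

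\textbf{The gap: the case $g-2i=2$.} In Step 2 you exclude only $g-2i\le1$ from the diagram argument. When $g-2i=2$, the middle piece is the single node $s_{i+1}=s_{g-i-1}$, and $\tau$ fixes it. So the induced automorphism of the $A_1$ diagram is trivial, not ``the nontrivial diagram involution of type $A$''. The diagram therefore cannot distinguish $\GL_2$ from $\mathrm U_2$. Both have derived group $\mathrm{SL}_2$, which has no nontrivial forms over a finite field, and they differ only in how Frobenius acts on the centre. This case lies within the lemma's scope, for instance $g=4$, $i=1$. That is exactly the case $\Pi(\lambda)=\tS\setminus\{s_1,s_{g-1}\}$ with $g=4$ in the comparison remark of \S4. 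For $g-2i\ge3$ your argument is fine: the based root datum of $\GL_n$, $n\ge3$, has a unique automorphism inducing the diagram involution.

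\textbf{The fix, as in the paper.} The $\F_p$-form of a non-semisimple group is determined by the Frobenius action on the whole based root datum, including the lattice, not by the Dynkin diagram alone. The paper's proof uses exactly this, in one line. It notes that $\delta$ acts on the cocharacter lattice $\Z^g$ of the torus of $\Sp_{2g}$ by $(m_1,\ldots,m_g)\mapsto(-m_g,\ldots,-m_1)$. This action has two effects:
\begin{enumerate}
\item It swaps the two outer blocks of $i$ coordinates. This gives $\operatorname{Res}_{\F_{p^2}/\F_p}\Sp_{2i}$, since $\Sp_{2i}$ has no nontrivial forms over a finite field.
\item It acts on the middle block by $m_k\mapsto -m_{g+1-k}$. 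This is the Frobenius of $\mathrm U_{g-2i}$ uniformly for all $g-2i\ge1$, including $1$ and $2$.
\end{enumerate}
Your remark that $\delta$ acts on the central torus by inversion composed with Frobenius is exactly the missing input for $g-2i=2$, so the repair is easy. With the lattice computation, the separate lattice-chain treatment of the degenerate cases also becomes unnecessary.

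\textbf{A convention point.} If you do use lattice chains, note that with the paper's Iwahori (lower triangular modulo $p$) the stable lattices are spanned by $pe_1,\ldots,pe_k,e_{k+1},\ldots,e_{2g}$. They are not spanned by $p^{-1}e_1,\ldots,p^{-1}e_k,e_{k+1},\ldots,e_{2g}$. This does not affect the isomorphism type over $\overline{\F}_p$, but it matters when you compute how $\tau$ moves the graded pieces.
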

The same statement can also be found in \cite[Lemma 3.5.2]{Harashita10}.
\begin{proof}
The absolute root datum obtained by deleting $\{s_i,s_{g-i}\}$ is that of $\Sp_{2i}\times\allowbreak\GL_{g-2i}\times\allowbreak\Sp_{2i}$.
The action of $\Ad(\tau)\circ\sigma$ on the cocharacter lattice $\mathbb Z^g$ is given by $(m_1,\ldots,m_g)\mapsto(-m_g,\ldots,-m_1)$.
The assertion follows immediately from this.
\end{proof}

If $g$ is odd (resp.\ even), then the unique very special subset of $\tS$ with respect to $\tau$ is $\tS\setminus\{s_0,s_g\}$ (resp.\ $\tS\setminus\{s_{\frac{g}{2}}\}$).
In these cases, the lemma gives $\overline P_J^1\cong\mathrm U_g$ (resp.\ $\overline P_J^1\cong\operatorname{Res}_{\mathbb F_{p^2}/\mathbb F_p}\Sp_g$).
If $p\geq5$, then $\J^1\cap P_J^+$ is torsion-free by Lemma \ref{lemm:torsion-free}, since it is contained in the corresponding subgroup for $\GL_{2g}$ (cf.\ the argument after Lemma \ref{lemm:GLveryspecial}), which is isomorphic to $V_{p,1}$.

The proof of Theorem \ref{main theo} below is similar to that of Theorem \ref{main theo GL}.
We omit some details that are identical to those in that proof.
For $s\in\{s_0,s_g\}$ and $x\in\overline{\mathbb F}_p$, set $u(x)=1_{2g}+p[x]E_{1,2g}$ if $s=s_0$ and $u(x)=1_{2g}+[x]E_{g+1,g}$ if $s=s_g$.

\begin{proof}[Proof of Theorem \ref{main theo}]
By Corollary \ref{projection universal homeomorphism} and the $\J^1$-equivariance of $\pi$, it suffices to prove the same assertion for $X_w(\tau)$.
Let $w_1,\ldots,w_r$ be as in Theorem \ref{simple}.
By Proposition \ref{spherical} and Theorem \ref{simple}, there is an irreducible component $C$ of $X_w(\tau)$ whose stabilizer in $\J^1$ is $\J^1\cap P_J$, where $J=\supp_\sigma(w_r)$.
Since $J$ is very special with respect to $\tau$, we have $J=\tS\setminus\{s_0,s_g\}$ if $g$ is odd and $J=\tS\setminus\{s_{\frac{g}{2}}\}$ if $g$ is even.

Note that $\J$ acts transitively on the irreducible components of $X_w(\tau)$ and normalizes $\J^1$.
By Lemma \ref{finite quotient}, it suffices to show that if $j\in\J^1\cap P_J$ has finite order and fixes $C$ pointwise, then $j=\pm1$.
Let $j$ be such an element.
By Proposition \ref{spherical} and Theorem \ref{simple}, the element $j$ fixes $Y(w_r)$ pointwise, where $Y(w_r)=\{xI\in P_J/I\mid x^{-1}\tau\sigma(x)\in Iw_rI\}$.
Let $\overline j$ be the image of $j$ under the isomorphism
$$(\J^1\cap P_J)/(\J^1\cap P_J^+)\xrightarrow{\sim}(\overline P_J^1)^{\Ad(\tau)\circ\sigma}.$$

Assume that $g$ is even.
Then $J=\tS\setminus\{s_{\frac{g}{2}}\}$ and $(\overline P_J^1)^{\Ad(\tau)\circ\sigma}\cong\Sp_g(\mathbb F_{p^2})$.
By Proposition \ref{center}, we have $\overline j=\pm1$.
Hence $j=\pm u^+$ for some $u^+\in\J^1\cap P_J^+$.
Since $u^+$ has finite order and $\J^1\cap P_J^+$ is torsion-free, we have $u^+=1$ and $j=\pm1$.

Assume that $g$ is odd.
Then $J=\tS\setminus\{s_0,s_g\}$ and $(\overline P_J^1)^{\Ad(\tau)\circ\sigma}\cong\mathrm U_g(\mathbb F_p)$.
By Proposition \ref{center}, we have $\overline j=c$ for some $c\in\mathbb F_{p^2}^\times$ satisfying $c^{p+1}=1$.
Set $z=\diag([c],\ldots,[c],[c^p],\ldots,[c^p])$, where each of $[c]$ and $[c^p]$ occurs $g$ times.
Then $z\in\J^1\cap P_J$ and its image in $(\overline P_J^1)^{\Ad(\tau)\circ\sigma}$ is $c$.
Hence $j=zu^+$ for some $u^+\in\J^1\cap P_J^+$.

Let $s\in\tS$ be the element corresponding to $k=r-1$ in Theorem \ref{simple} (i), so that $w_r=sw_{r-1}s$ and $\ell(w_r)=\ell(w_{r-1})-2$.
By Theorem \ref{simple} (ii), we have $s=s_0$ or $s=s_g$.
Let $f\colon X_{w_{r-1}}(\tau)\rightarrow X_{w_r}(\tau)$ be the morphism induced by Proposition \ref{DL method prop}.
For $yI\in Y(w_r)$, we have
$$f^{-1}(yI)=\{yu(x)sI\mid x\in\overline{\mathbb F}_p\}.$$
By assumption and Theorem \ref{simple}, $j$ fixes $f^{-1}(yI)$ pointwise.
Since $P_J^+$ is normal in $P_J$ and the image of $z$ in $P_J/P_J^+$ is central, both $y^{-1}u^+y$ and $z^{-1}y^{-1}zy$ lie in $P_J^+\cap\Sp_{2g}(\bQp)$.
Hence
$$y^{-1}u^+y=\begin{pmatrix}1_g+pA&pB\\ C&1_g+pD\end{pmatrix},\qquad z^{-1}y^{-1}zy=\begin{pmatrix}1_g+pA'&pB'\\ C'&1_g+pD'\end{pmatrix}$$
for some $A,B,C,D,A',B',C',D'\in\operatorname{Mat}_g(\breve{\Z}_p)$.
Then
$$jyu(x)sI=
\begin{cases}
yu(c^{1-p}(x+\overline{b_{1,g}}+\overline{b'_{1,g}}))sI & \text{if }s=s_0,\\
yu(c^{p-1}(x+\overline{c_{1,g}}+\overline{c'_{1,g}}))sI & \text{if }s=s_g.
\end{cases}$$
Here $\overline{b_{1,g}}$, $\overline{c_{1,g}}$, $\overline{b'_{1,g}}$ and $\overline{c'_{1,g}}$ are the reductions modulo $p$ of the $(1,g)$-entries of $B$, $C$, $B'$ and $C'$, respectively, and are independent of $x$.
Since $j$ fixes $f^{-1}(yI)$ pointwise, we have $c^{p-1}=1$.
Together with $c^{p+1}=1$, this gives $c^2=1$, and hence $c=\pm1$.
Thus $z=\pm1$, and $u^+=\pm j$ has finite order.
Since $\J^1\cap P_J^+$ is torsion-free, we have $u^+=1$ and $j=z=\pm1$.
\end{proof}

\begin{lemm}
\label{maximal support}
Let $w\in\SW\cap W_a\tau$. 
If $g$ is even (resp.\ odd) and $\supp_\sigma(w)$ is a maximal proper $\tau$-stable subset of $\tS$, then $\supp_\sigma(w)=\tS\setminus\{s_{\frac{g}{2}}\}$ (resp.\ $\supp_\sigma(w)=\tS\setminus\{s_{\frac{g-1}{2}},s_{\frac{g+1}{2}}\}$).
\end{lemm}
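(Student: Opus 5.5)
The plan is to use only the combinatorics of the affine Dynkin diagram of type $\tilde C_g$ and the minimality of $w$ in its coset $W_{\bS}w$. The key input is the observation already made in the proof of Proposition \ref{center}: for $w\in\SW$, no nonempty $\tau$-stable union of connected components of $\supp_\sigma(w)$ can lie inside $\bS$. I would first prove this directly here, then run through the list of maximal proper $\tau$-stable subsets given in Lemma \ref{reductive quotient symplectic}.

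\emph{Step 1 (the key claim).} Write $w=w'\tau$ with $w'\in W_J$, where $J=\supp_\sigma(w)$. Let $C\subseteq J$ be a $\tau$-stable union of connected components of $J$ with $C\subseteq\bS$.
\begin{itemize}
\item Since distinct components of $J$ commute, we can write $w'=w_Cw_{J\setminus C}$ with $w_C\in W_C$ and $w_{J\setminus C}\in W_{J\setminus C}$.
\item The supports of $w_C$ and $w_{J\setminus C}$ are disjoint and $\ell(\tau)=0$, so $\ell(w)=\ell(w_C)+\ell(w_{J\setminus C}\tau)$.
\item Since $w_C\in W_C\subseteq W_{\bS}$ and $w\in\SW$ is the minimal length element of $W_{\bS}w$, this forces $w_C=1$.
\item Then $\supp(w')\subseteq J\setminus C$. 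As $J\setminus C$ is $\tau$-stable, we get $\supp_\sigma(w)\subseteq J\setminus C$, so $C=\emptyset$.
\end{itemize}

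\emph{Step 2 (case analysis).} By Lemma \ref{reductive quotient symplectic}, the maximal proper $\tau$-stable subsets are $J_i=\tS\setminus\{s_i,s_{g-i}\}$ for $0\le i\le\lfloor g/2\rfloor$, read as $\tS\setminus\{s_{g/2}\}$ when $2i=g$. The connected components of $J_i$ are:
\begin{itemize}
\item the outer pieces $\{s_0,\ldots,s_{i-1}\}$ and $\{s_{g-i+1},\ldots,s_g\}$, which $\tau$ swaps because $\tau s_k\tau^{-1}=s_{g-k}$;
\item the middle piece $M_i=\{s_{i+1},\ldots,s_{g-i-1}\}$, which is $\tau$-stable and contained in $\bS$ (the only generator outside $\bS$ is $s_0$, and $s_0\notin M_i$ because $i+1\ge1$).
\end{itemize}
If $M_i\neq\emptyset$, i.e.\ $2i\le g-2$, then Step 1 applied to $C=M_i$ gives a contradiction. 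This covers $i=0$, where $J_0=M_0\subseteq\bS$ is also excluded directly.

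The remaining cases are $2i\ge g-1$, that is $i=g/2$ when $g$ is even and $i=(g-1)/2$ when $g$ is odd. These give exactly the sets in the statement: $\tS\setminus\{s_{\frac g2}\}$ for even $g$, and $\tS\setminus\{s_{\frac{g-1}2},s_{\frac{g+1}2}\}$ for odd $g$.

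\emph{Expected difficulty.} No step should be a real obstacle. The only point needing care is the length additivity in Step 1, which holds because $w_C$ and $w_{J\setminus C}$ lie in standard parabolic subgroups with disjoint, mutually commuting generating sets. One should also check the degenerate small-$g$ case $g=1$, where $i=0=(g-1)/2$ and the conclusion still reads correctly.
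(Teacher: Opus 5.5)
Your proof is correct and is essentially the paper's argument. Both split $w\tau^{-1}$ along the mutually commuting components of $\tS\setminus\{s_i,s_{g-i}\}$ and use minimality of $w$ in $W_{\bS}w$ to kill the parts lying in $\bS$. The only difference is that the paper discards both the middle block and $\{s_{g-i+1},\ldots,s_g\}$, then observes that the $\tau$-closure misses $s_{\lfloor g/2\rfloor}$; you discard only the $\tau$-stable middle block $M_i$, which gives $\supp_\sigma(w)\subseteq J\setminus M_i$ directly.
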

\begin{proof}
Write $w=v\tau$ with $v\in W_a$. Since $\supp_\sigma(w)$ is maximal, we have $\supp_\sigma(w)=\tS\setminus\{s_i,s_{g-i}\}$ for some $0\leq i\leq\left\lfloor\frac{g}{2}\right\rfloor$. Suppose that $i<\left\lfloor\frac{g}{2}\right\rfloor$. The simple reflections in different sets among $\{s_0,\ldots,s_{i-1}\}$, $\{s_{i+1},\ldots,s_{g-i-1}\}$, and $\{s_{g-i+1},\ldots,s_g\}$ commute. Since $w\in\SW$, it follows that $\supp(v)\subseteq\{s_0,\ldots,s_{i-1}\}$. Since $\tau$ exchanges the first and third sets, we have $\supp_\sigma(w)\subseteq\{s_0,\ldots,s_{i-1}\}\cup\{s_{g-i+1},\ldots,s_g\}$. This is impossible because $s_{\left\lfloor\frac{g}{2}\right\rfloor}\in\supp_\sigma(w)$. Hence $i=\left\lfloor\frac{g}{2}\right\rfloor$, as desired.
\end{proof}

The same argument as in the proof of Theorem \ref{main theo} proves the following result.
\begin{theo}
\label{maximal supersingular EO}
Assume that $w\in\SAdm(\mu)$, $\supp_\sigma(w)$ is a maximal proper $\tau$-stable subset of $\tS$, $g$ is even and $p\geq5$.
There exists an open dense subscheme $U\subseteq\pi(X_w(\tau))$ such that, for every closed point $xK\in U$, if $j\in\J^1$ has finite order and $jxK=xK$, then $j=\pm1$.
\end{theo}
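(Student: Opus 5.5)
By Lemma \ref{maximal support}, since $g$ is even we have $J\coloneqq\supp_\sigma(w)=\tS\setminus\{s_{\frac g2}\}$. This $J$ is the very special subset treated in the even case of the proof of Theorem \ref{main theo}. Lemma \ref{reductive quotient symplectic} then gives $\overline P_J^1\cong\operatorname{Res}_{\mathbb F_{p^2}/\mathbb F_p}\Sp_g$, so $(\overline P_J^1)^{\Ad(\tau)\circ\sigma}\cong\Sp_g(\mathbb F_{p^2})$. As $J$ is a proper $\tau$-stable subset, Proposition \ref{spherical} and the discussion after it apply directly; no Deligne--Lusztig reduction is needed. Because $J$ is maximal, $\bS_w\subseteq J$, so
$$\pi(X_w(\tau))=\bigsqcup_{j\in\J/(\J\cap P_J)}j\pi(Y(w)),$$
with $\J$ acting transitively on these pairwise disjoint pieces.

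\textbf{Step 1.} Fix the component $C=\pi(Y(w))$. It is quasi-compact, since it lies in $P_J/P_{J\cap\bS}$, and its stabilizer in $\J^1$ is $\J^1\cap P_J$. This holds because the stabilizer in $\J$ is $\J\cap P_J$ by the decomposition above, and $\J$ normalizes $\J^1$. By Lemma \ref{finite quotient}, the action of $\J^1\cap P_J$ on $C$ factors through a finite quotient. For each element of that finite quotient, its fixed-point locus in $C$ is closed, since $C$ is separated. We therefore take $U\cap C$ to be the complement of the union of the fixed loci of those elements that do not act trivially. This set is open, and it is nonempty because $C$ is irreducible. We then transport it by $\J$ to all components, which is legitimate because the components are disjoint and $\J$ normalizes $\J^1$. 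The open set so obtained is dense in $\pi(X_w(\tau))$.

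\textbf{Step 2.} It then remains to show that any finite-order $j\in\J^1\cap P_J$ fixing $C$ pointwise equals $\pm1$. Let $\overline j\in\Sp_g(\mathbb F_{p^2})$ be the image of $j$. Since $w\in\SW$ and $j$ fixes $\pi(Y(w))$ pointwise, the second assertion of Proposition \ref{center} gives $\overline j\in Z(\overline P_J^1)$. Its rational points are $Z(\Sp_g)(\mathbb F_{p^2})=\{\pm1\}$, so $\overline j=\pm1$. Hence $j=\pm u^+$ with $u^+\in\J^1\cap P_J^+$, and $u^+$ has finite order. For $p\ge5$ the group $\J^1\cap P_J^+$ is torsion-free: it embeds into the $\GL_{2g}$-analogue, which is conjugate to $V_{p,1}$, and Lemma \ref{lemm:torsion-free} applies. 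Thus $u^+=1$ and $j=\pm1$.

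The main point requiring care is the application of Proposition \ref{center} at the Grassmannian level. One must confirm that $C$ is the full fine Deligne--Lusztig variety $\pi(Y(w))$ inside $P_J/P_{J\cap\bS}$, with the action of $\J^1\cap P_J$ factoring through $\overline P_J^1$. This is where maximality of $J$ is used, through $\bS_w\subseteq J$. Unlike in the odd case of Theorem \ref{main theo}, no root-subgroup computation is needed, because the center of $\Sp_g$ over $\mathbb F_{p^2}$ is already $\{\pm1\}$.
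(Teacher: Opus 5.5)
Your proposal is correct and follows essentially the same route as the paper. You identify $J=\tS\setminus\{s_{\frac g2}\}$ via Lemma \ref{maximal support}, reduce via the fine Deligne--Lusztig decomposition (with stabilizer $\J^1\cap P_J$) and Lemma \ref{finite quotient} to elements fixing $\pi(Y(w))$ pointwise, apply Proposition \ref{center} to get $\overline j=\pm1$ in $\Sp_g(\mathbb F_{p^2})$, and conclude by torsion-freeness of $\J^1\cap P_J^+$. Your Step 1 spells out the construction of $U$ that the paper leaves implicit. One small point: Proposition \ref{center} literally gives $\overline j\in Z(\overline P_J)$ rather than $Z(\overline P_J^1)$, but intersecting with $(\overline P_J^1)^{\Ad(\tau)\circ\sigma}$ still yields $\pm1$.
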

\begin{proof}
Set $J=\supp_\sigma(w)$. By Lemma \ref{maximal support}, we have $J=\tS\setminus\{s_{\frac{g}{2}}\}$. By Proposition \ref{spherical} and the description following it, the irreducible components of $\pi(X_w(\tau))$ are the $\J$-translates of a fine Deligne--Lusztig variety $\pi(Y(w))$, whose stabilizer in $\J^1$ is $\J^1\cap P_J$.
Note that $\J$ normalizes $\J^1$. By Lemma \ref{finite quotient}, it suffices to show that if $j\in\J^1\cap P_J$ has finite order and fixes $\pi(Y(w))$ pointwise, then $j=\pm1$. Let $j$ be such an element, and let $\overline j$ be its image under the isomorphism
$$(\J^1\cap P_J)/(\J^1\cap P_J^+)\xrightarrow{\sim}(\overline P_J^1)^{\Ad(\tau)\circ\sigma}\cong\Sp_g(\mathbb F_{p^2})$$
given by Lemma \ref{lemm:GSp-red-quot}.
By Proposition \ref{center}, we have $\overline j=\pm1$. Hence $j=\pm u^+$ for some $u^+\in\J^1\cap P_J^+$. Since $u^+$ has finite order and $\J^1\cap P_J^+$ is torsion-free, we have $u^+=1$ and $j=\pm1$.
\end{proof}

If $g$ is odd and $J=\tS\setminus\{s_{\frac{g-1}{2}},s_{\frac{g+1}{2}}\}$, then $\overline P_J^1\cong\operatorname{Res}_{\mathbb F_{p^2}/\mathbb F_p}\Sp_{g-1}\times\mathrm U_1$ and $(\overline P_J^1)^{\Ad(\tau)\circ\sigma}\cong\Sp_{g-1}(\mathbb F_{p^2})\times\mathrm U_1(\mathbb F_p)$, where $\mathrm U_1(\mathbb F_p)=\{c\in\mathbb F_{p^2}^\times\mid c^{p+1}=1\}$. Its center $\{\pm1\}\times\mathrm U_1(\mathbb F_p)$ fixes $\pi(Y(w))$ pointwise, so the same argument does not apply when $g$ is odd.
In fact, there is a counterexample for every odd $g\geq3$ (see \cite[Remark 6.18]{KY24}).

\begin{exam}
\label{maximal supersingular EO example}
Assume that $g$ is even. The maximal supersingular Ekedahl--Oort stratum corresponds to $w=s_0(s_1s_0)(s_2s_1s_0)\cdots(s_{\frac{g}{2}-1}\cdots s_1s_0)\tau$. We have $\supp_\sigma(w)=\tS\setminus\{s_{\frac{g}{2}}\}$. Hence Theorem \ref{maximal supersingular EO} applies to the maximal supersingular Ekedahl--Oort stratum and generalizes \cite[Theorem A]{KY24}. Among the elements $w\in\SAdm(\mu)$ such that $\supp_\sigma(w)$ is a maximal proper $\tau$-stable subset of $\tS$, the unique element of minimal length is $w=s_0s_1\cdots s_{\frac{g}{2}-1}\tau$. The corresponding $\pi(X_w(\tau))$ has dimension $\ell(w)=\frac{g}{2}$.
\end{exam}

\begin{exam}
As observed in Remark \ref{dense Coxeter stratum}, there exists a unique $w\in\SAdm(\mu)$ of positive Coxeter type such that $\dim\pi(X_w(\tau))=\dim X_\mu(\tau)=\left\lfloor\frac{g^2}{4}\right\rfloor$.
There are other Ekedahl--Oort strata attached to elements $w$ of positive Coxeter type with $\supp_\sigma(w)=\tS$. For $g=7$, among the $44$ Ekedahl--Oort strata meeting the supersingular locus, $8$ are of this form. For $g=8$, among the corresponding $89$ strata, $12$ are of this form.
Determining the elements of minimal length among those of positive Coxeter type with $\supp_\sigma(w)=\tS$ is a nontrivial but tractable combinatorial problem. Although we do not give a proof here, computer experiments suggest that the minimal length is $\frac{g+3}{2}$ when $g$ is odd, $g+2$ when $g\equiv0\pmod 4$, and $g+3$ when $g\equiv2\pmod 4$. The corresponding varieties $\pi(X_w(\tau))$ have dimension $\frac{g+1}{2}$, $\frac{3g}{4}+1$, and $\frac{3(g+2)}{4}$, respectively (cf.\ \cite[Theorem 5.7(b)]{SSY23} for the dimension of $X_w(b)$ when $w$ is of positive Coxeter type). The experiments further suggest that there is one element of minimal length in each of the first two cases and two in the last case.

\end{exam}

\subsection{Further results in the case of $\operatorname{GSp}_{2g}$}
We use the notation in \S3.2.
The following proposition is a key ingredient in the proof of Theorem B.
\begin{prop}
\label{fixed locus positive Coxeter}
Assume that $w\in\SAdm(\mu)$ has positive Coxeter part, $\supp_\sigma(w)=\tS$, $g\geq4$ is even and $p\geq5$. Let $j\in\J^1\setminus\{\pm1\}$ be an element of finite order. Then the locus in $\pi(X_w(\tau))$ fixed pointwise by $j$ has codimension at least $2$.
\end{prop}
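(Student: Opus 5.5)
We would argue along the lines of the proof of Theorem \ref{main theo}, but keep track of dimensions rather than only showing that the fixed locus is a proper subset. By Corollary \ref{projection universal homeomorphism} we may work with $X_w(\tau)$ instead of $\pi(X_w(\tau))$. Since $g$ is even, the very special subset is $J=\tS\setminus\{s_{\frac g2}\}$, and $(\overline P_J^1)^{\Ad(\tau)\circ\sigma}\cong\Sp_g(\mathbb F_{p^2})$.

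\emph{Step 1: reduce to one component.} The irreducible components of $X_w(\tau)$ are pairwise disjoint $\J$-translates of one component $C$, whose stabilizer in $\J^1$ is $\J^1\cap P_J$. On a component not stabilized by $j$, the fixed locus of $j$ is empty. On a component $hC$ that $j$ does stabilize, the fixed locus is the $h$-translate of the fixed locus of $h^{-1}jh\in\J^1\cap P_J$ on $C$. Conjugation by $h\in\J$ preserves $\J^1$, finite order and $\{\pm1\}$. It therefore suffices to show that the fixed locus $C^j$ of a finite-order $j\in(\J^1\cap P_J)\setminus\{\pm1\}$ has codimension at least $2$ in $C$.

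\emph{Step 2: the image $\overline j$ is non-central.} If $\overline j=\pm1$, then $j=\pm u^+$ with $u^+\in\J^1\cap P_J^+$ of finite order. This group is torsion-free by Lemma \ref{lemm:torsion-free}, so $j=\pm1$, which is excluded. Hence $\overline j\notin Z(\Sp_g(\mathbb F_{p^2}))$.

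\emph{Step 3: use the fibration structure.} By Theorem \ref{simple}, $C$ maps $\J^1\cap P_J$-equivariantly onto $Y(w_r)$ through a tower of $\A^1$-fibrations (up to universal homeomorphism). So $C^j$ lies in the preimage of $Y(w_r)^{\overline j}$, and we split into two cases.
\begin{enumerate}[(a)]
\item If $\operatorname{codim}_{Y(w_r)}Y(w_r)^{\overline j}\ge2$, we are done.
\item If $Y(w_r)^{\overline j}$ is a divisor, we use the last step $f\colon X_{w_{r-1}}(\tau)\to X_{w_r}(\tau)$ with fibres $\{yu(x)sI\}$, $s\in\{s_0,s_g\}$, as in the proof of Theorem \ref{main theo}. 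The computation there shows that $j$ acts on each fibre by an affine map $x\mapsto a(y)x+b(y)$. If this map is not the identity for generic $y$ in the divisor, then on each fibre the fixed points are at most one point, and the codimension rises to $2$.
\end{enumerate}
A fixed locus of codimension $\le1$ inside the higher $\A^1$-fibres is not itself a problem, because $C^j$ is closed and we only need a dimension bound.

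\emph{Step 4: fixed loci on the Coxeter variety.} $Y(w_r)$ is a Coxeter-type Deligne--Lusztig variety for $\operatorname{Res}_{\mathbb F_{p^2}/\mathbb F_p}\Sp_g$, of dimension $\frac g2$. We treat the two kinds of non-central $\overline j$ separately.
\begin{enumerate}[(i)]
\item If $\overline j$ has a nontrivial unipotent part, we expect $Y(w_r)^{\overline j}=\emptyset$, since Coxeter varieties have no fixed points for nontrivial $p$-elements (Lusztig).
\item If $\overline j$ is semisimple, the fixed locus should be a union of Coxeter varieties for $C_G(\overline j)^\circ$. Its dimension is bounded by the $\mathbb F_{p^2}$-rank of rational Levi subgroups containing $\overline j$ in their center, minus one for each central direction.
\end{enumerate}
Thus codimension exactly $1$ can only occur when $\overline j$ is central in a Levi subgroup of semisimple rank $\frac g2-1$, i.e.\ of type $\GL_1\times\Sp_{g-2}$ (or $\GL_2$-type when $g=4$). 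In that situation $\overline j$ acts nontrivially on the root group of $s_0$ or $s_g$. For example, $\overline j=\diag(c,1,\ldots,1,c^{-1})$ type elements give $a(y)=c^{\pm(p-1)}$ or a similar nontrivial character value, and $g\ge4$ ensures this rank-$1$ torus meets the relevant root.

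\emph{Main obstacle.} The hardest part is making Step 4 precise: proving the dimension bound for fixed loci of non-central elements on Coxeter varieties, and identifying exactly the codimension-$1$ cases. We would first try Lusztig's description of Coxeter varieties via their stratification by rational parabolics, $\overline{Y(c)}=\bigsqcup_P Y_{L_P}(c_{L_P})$, together with the fact that an element fixing a point of $Y(c)$ must lie in a specific Coxeter torus. The second delicate point is checking, in the codimension-$1$ cases, that the fibre coordinate really gets multiplied by a nontrivial scalar and not merely translated by zero.
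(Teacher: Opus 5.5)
Steps 1, 2 and 3(a) agree with the paper's reductions. However, the proposition is precisely the claim that $Y(w_r)^{\overline j}$ has codimension $\ge2$ in $Y(w_r)$, and you leave that as ``we expect'' and ``should be''. As written there is no proof, and the heuristic you offer points the wrong way.

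The missing idea is an elementary eigenvector argument on Lusztig's model
$Y\cong\{[x]\in\mathbb P(V)\mid\langle x,\delta^i(x)\rangle=0\ (1\le i<m),\ \langle x,\delta^m(x)\rangle\neq0\}$, where $g=2m$.
\begin{enumerate}
\item Suppose $h\in\Sp_g(\mathbb F_{p^2})$ fixes $[x]$, say $hx=cx$. Then $h\delta^i(x)=c^{p^{2i}}\delta^i(x)$.
\item Since $x,\delta(x),\ldots,\delta^{g-1}(x)$ is a basis of $V$, $h$ is semisimple and all its eigenspaces have the same dimension $g/d$, where $c$ has degree $d$ over $\mathbb F_{p^2}$. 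Moreover $c^{1+p^{2m}}=1$.
\item This settles your case (i) immediately.
\item It also shows that your proposed codimension-one candidates, such as $\overline j=\diag(c,1,\ldots,1,c^{-1})$, have \emph{no} fixed points on $Y$: the eigenvalue $1$ has multiplicity $g-2$.
\item The fixed locus lies in $\bigcup_W\mathbb P(W)$, which has dimension $\frac gd-1\le m-2$ when $d\ge3$.
\item $d=1$ forces $h=\pm1$, and $d=2$ is impossible for $m$ even, since it would give $c^2=1$.
\end{enumerate}
The one delicate case, which your sketch does not see, is $d=2$ with $m$ odd. Then $h$ has two Lagrangian eigenspaces exchanged by $\delta$, and $\mathbb P(W)$ has dimension $m-1$. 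You need that $x\mapsto\langle x,\delta(x)\rangle$ is not identically zero on $W$ to cut down to dimension $m-2$. This holds because the coefficient of $a^{p^2}$ in $\langle x+ax',\delta(x+ax')\rangle$ is $\langle x,\delta(x')\rangle$, which is nonzero for suitable $x,x'\in W$. Consequently case (b) of your Step 3 never occurs, and no fibre analysis is needed.

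Your fallback in Step 3(b) would not work anyway. For $g$ even, $J=\tS\setminus\{s_{\frac g2}\}$, and Theorem \ref{simple}(ii) gives $s\notin J$ for the last reduction step. So $s=s_{\frac g2}$, not $s_0$ or $s_g$, which lie in $J$. The fibre computation you borrow comes from the odd-$g$ case of Theorem \ref{main theo} and Lemma \ref{rank one action}. It relies on the special form $j=zu^+$ with $\overline z$ central in $\overline P_J$, so that for instance $z^{-1}y^{-1}zy\in P_J^+$. For non-central $\overline j$ this is unavailable. You would have to redo the computation on the $s_{\frac g2}$-fibres, and nontriviality of the induced affine map would not be automatic.
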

\begin{proof}
 By Corollary \ref{projection universal homeomorphism} and the $\mathbb J^1$-equivariance of $\pi$, it
suffices to prove the corresponding codimension estimate for the
fixed-point locus in $X_w(\tau)$. By Proposition~\ref{spherical} and Theorem~\ref{simple},
the irreducible components of $X_w(\tau)$ are pairwise disjoint. Hence, it is enough to prove the estimate on every
$j$-stable irreducible component $C$ of $X_w(\tau)$.
After conjugating $C$ and $j$ by an element
of $\mathbb J$, we may assume that the component of $X_{w_r}(\tau)$
associated with $C$ by the reduction sequence in Theorem~\ref{simple} is
$Y:=Y(w_r).$
Set $J=\operatorname{supp}_\sigma(w_r)$. Then
$j\in\mathbb J^1\cap P_J.$
Let
$
h\in\operatorname{Sp}_g(\mathbb F_{p^2})$
be the image of $j$ under
\[
(\mathbb J^1\cap P_J)/(\mathbb J^1\cap P_J^+)
\cong\operatorname{Sp}_g(\mathbb F_{p^2}).
\]
As in the proof of Theorem~\ref{main theo}, the torsion-freeness of
$\mathbb J^1\cap P_J^+$ shows that $h$ is noncentral.


Set $V=\overline{\mathbb F}_p^{\,g}$, equip $V$ with the symplectic form $\langle x,y\rangle=\sum_{i=1}^m(x_i y_{g+1-i}-x_{g+1-i}y_i)$, and let $\delta\colon V\longrightarrow V$ be given by $\delta(x_1,\ldots,x_g)=(x_1^{p^2},\ldots,x_g^{p^2})$. 
Write $g=2m$.
By \cite[\S4]{Lusztig76Green}, $Y$ can be identified with
$$
\left\{[x]\in\mathbb P(V)\,\middle|\,\langle x,\delta^i(x)\rangle=0\text{ for }1\leq i<m,\ \langle x,\delta^m(x)\rangle\neq0\right\}.
$$

Let $[x]\in Y$ be fixed by $h$, and write $hx=cx$ for some $c\in\overline{\mathbb F}_p^\times$. Since $h$ commutes with $\delta$, we have
$$
h\delta^i(x)=c^{p^{2i}}\delta^i(x)\qquad(0\leq i<g).
$$
The vectors $x,\delta(x),\ldots,\delta^{g-1}(x)$ form a basis of $V$ by the proof of \cite[Proposition 26 (i)]{Lusztig76Green}. Thus, $h$ has a basis of eigenvectors and is semisimple. In particular, every point of $Y$ fixed by $h$ belongs to $\bigcup_W\mathbb P(W)\subseteq \mathbb P(V)$, where $W$ runs over the eigenspaces of $h$.
Let $d$ be the smallest positive integer such that $c^{p^{2d}}=c$. Since $\delta$ permutes the eigenspaces, we have $d\mid g$, and each eigenspace has dimension $\frac{g}{d}$. Moreover, since $h\in\Sp_g(\mathbb F_{p^2})$ and $\langle x,\delta^m(x)\rangle\neq0$, we have $c^{1+p^{2m}}=1$.

If $d=1$, then $c=\pm1$ and hence $h=\pm1$, a contradiction. If $d\geq3$, every fixed point lies in the projective space associated with an eigenspace of $h$, which has dimension $\frac{g}{d}-1\leq m-2$.
Suppose that $d=2$. If $m$ is even, then $c^{p^{2m}}=c$, so $c^{1+p^{2m}}=1$ gives $c^2=1$, contradicting $d=2$. If $m$ is odd, then $m\geq3$ and $c^{p^2}=c^{-1}$. If $x$ and $x'$ lie in the same eigenspace, then $\langle x,x'\rangle=\langle hx,hx'\rangle=c^{\pm2}\langle x,x'\rangle$ and hence $\langle x,x'\rangle=0$. Since $\langle\ ,\ \rangle$ is non-degenerate on $V$ and $\delta$ exchanges the two eigenspaces, there exist $x,x'$ in the same eigenspace such that $\langle x,\delta(x')\rangle\neq0$. Since $\delta(ax')=a^{p^2}\delta(x')$, we have
$$
\langle x+ax',\delta(x+ax')\rangle=\langle x,\delta(x)\rangle+a^{p^2}\langle x,\delta(x')\rangle+a\langle x',\delta(x)\rangle+a^{p^2+1}\langle x',\delta(x')\rangle.
$$
The coefficient of $a^{p^2}$ is $\langle x,\delta(x')\rangle\neq0$, so this expression is nonzero for all but finitely many $a\in\overline{\mathbb F}_p$. Thus, for either eigenspace $W$, the equation $\langle x,\delta(x)\rangle=0$ is a nontrivial homogeneous equation on $\mathbb P(W)$ and defines a hypersurface of dimension $m-2$. Since every point of $Y$ satisfies this equation, $Y\cap\mathbb P(W)$ is contained in this hypersurface. It follows that the points of $Y$ fixed by $h$ form a locus of dimension at most $m-2$. This completes the proof.
\end{proof}

Let $v$ be as in Remark \ref{dense Coxeter stratum}.
For this $v$, the above conclusion also holds for odd $g$.
\begin{prop}
\label{fixed locus positive Coxeter odd}
Assume that $g\geq5$ is odd and $p\geq5$. Let $j\in\J^1\setminus\{\pm1\}$ be an element of finite order. Then the locus in $\pi(X_{\vp^\mu v}(\tau))$ fixed pointwise by $j$ has codimension at least $2$.
\end{prop}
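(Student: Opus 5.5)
We follow the proof of Proposition \ref{fixed locus positive Coxeter}, adapted to the odd case, where the very special subset is $J=\tS\setminus\{s_0,s_g\}$ and $(\overline P_J^1)^{\Ad(\tau)\circ\sigma}\cong\mathrm U_g(\mathbb F_p)$.

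\textbf{Reduction.} By Corollary \ref{projection universal homeomorphism} it suffices to work in $X_{\vp^\mu v}(\tau)$. The components are pairwise disjoint, so after conjugating by an element of $\J$ we may take a $j$-stable component $C$ lying over $Y=Y(w_r)$, with $j\in\J^1\cap P_J$. Let $h\in\mathrm U_g(\mathbb F_p)$ be the image of $j$. The component $C$ is an iterated $\A^1$-fibration over $Y$, and the last step $f\colon X_{w_{r-1}}(\tau)\to X_{w_r}(\tau)$ has fibres $f^{-1}(yI)=\{yu(x)sI\}$ with $s\in\{s_0,s_g\}$.

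\textbf{Case $h$ central.} Then $h=c$ with $c^{p+1}=1$. Write $j=zu^+$ as in the proof of Theorem \ref{main theo}. On each fibre, $j$ acts by the affine map $x\mapsto c^{\pm(1-p)}(x+\beta(y))$. Since $j\neq\pm1$, torsion-freeness of $\J^1\cap P_J^+$ forces $c\neq\pm1$, so $c^{p-1}\neq1$. The map is then a nontrivial dilation composed with a translation, which has exactly one fixed point on each fibre $\A^1$. Hence the fixed locus in $f^{-1}(Y)$ has codimension at least $1$. This alone is not enough.

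To obtain codimension $2$, we use the special choice $\vp^\mu v$, whose reduction sequence presumably has $r\geq2$, i.e.\ at least two $\A^1$-fibration steps (the dimension is $\lfloor g^2/4\rfloor$, which far exceeds $\dim Y=g-1$). We then repeat the fibre computation at the previous reduction step $X_{w_{r-2}}\to X_{w_{r-1}}$, with root subgroup $u'(x)$ attached to the corresponding reflection $s'$. The scalar $z$ acts on $u'(x)$ through a character $c^{\pm(1-p)}$ or trivially, depending on whether $s'$ is $s_0$ or $s_g$, or some other affine root in $J$. If this character is nontrivial, we again get a unique fixed point in each fibre, which gives a second codimension. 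The \emph{main obstacle} is to check this explicitly for the reduction path of $\vp^\mu v$: we must pick the path so that at least two of the $\A^1$-steps correspond to affine roots on which $z$ acts nontrivially. We would try the path coming from the $a=1$ locus description in \cite{Viehmann26}, where the fibres are in explicit coordinates.

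\textbf{Case $h$ noncentral.} We imitate the even case using Lusztig's model \cite[\S4]{Lusztig76Green} of the Coxeter variety for $\mathrm U_g$: $Y$ is the set of $[x]\in\mathbb P(V)$ with $\langle x,\delta^i x\rangle=0$ for $0\le i<(g-1)/2$ and $\langle x,\delta^{(g-1)/2}x\rangle\neq0$, where $V=\aFq^{\,g}$ carries a hermitian form and $\delta$ is the $p$-Frobenius twist. As before, $x,\delta x,\ldots,\delta^{g-1}x$ form a basis, so any $h$ fixing $[x]$ is semisimple. Let $d$ be the length of the Frobenius orbit on its eigenvalues; then $d\mid g$ and each eigenspace has dimension $g/d$. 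Since $g$ is odd and $h$ is noncentral, $d\geq3$. So every fixed point lies in some $\mathbb P(W)$ with $\dim\le g/3-1$, which is at most $\dim Y-2=g-3$ because $g\ge5$.

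Finally, the fixed locus in $C$ lies over the fixed locus in $Y$, with fibres of dimension at most $\dim C-\dim Y$. This gives codimension $\geq2$ and completes the plan.
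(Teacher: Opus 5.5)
\textbf{There is a genuine gap: the central case is left unproved.} Your non-central case is essentially the paper's argument: Lusztig's model, semisimplicity of $h$ from the basis $x,\delta x,\ldots,\delta^{g-1}x$, and $d\geq3$ because $g$ is odd. Two slips there should be fixed. First, take $\delta$ to be the $p^2$-Frobenius, so that it commutes with $h\in\mathrm U_g(\mathbb F_p)\subseteq\GL_g(\mathbb F_{p^2})$. Second, $\dim Y=m=\frac{g-1}{2}$, not $g-1$, so the bound you must check is $\frac{g}{d}-1\leq m-2$. This does hold for odd $g\geq5$; for instance $d=g$ is forced when $g=5,7$. The real problem is the central case $j=zu^+$ with $c^{p+1}=1$ and $c\neq\pm1$. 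You leave it as the ``main obstacle'', and as written your argument only gives codimension $1$ there.

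\textbf{Missing ingredient 1: enough $s_0$/$s_g$ steps.} You need at least two length-decreasing steps $w_{k+1}=sw_ks$ with $s\in\{s_0,s_g\}$. Neither $r\geq2$ nor $\dim C\gg\dim Y$ implies this, and no special choice of path is needed. The paper uses the invariant $o(w)$, the number of occurrences of $s_0$ and $s_g$ in a reduced expression. It is well defined because the only braid relations involving $s_0,s_g$ are commutations and the length-$4$ relations with $s_1$ and $s_{g-1}$. Length-preserving steps and steps by $s_1,\ldots,s_{g-1}$ preserve $o$, while a length-decreasing step by $s_0$ or $s_g$ lowers it by $2$. Since $\vp^\mu v=s_0(s_1s_0)\cdots(s_{g-2}\cdots s_0)\tau$, one has $o(\vp^\mu v)=g-1=2m$, and $o(w_r)=0$. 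So every reduction path has exactly $m\geq2$ such steps; this is where $g\geq5$ is used.

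\textbf{Missing ingredient 2: the fibre computation away from $P_J$.} At these intermediate steps the base point $y\in X_{w_{k+1}}(\tau)^j$ no longer lies in $P_J$. So you cannot let $z$ act on $u'(x)$ by a character and absorb $u^+$ via $y^{-1}u^+y\in P_J^+$. What acts on the fibre is $y^{-1}jy\in I$, whose reduction is lower triangular with diagonal entries known only to lie in $\{c,c^{-1}\}$. Lemma~\ref{rank one action} handles exactly this. For the long-root reflections $s_0$ and $s_g$, the fibre action is $x\mapsto c_{11}^{2}(x+a)$, respectively $x\mapsto c_{gg}^{-2}(x+a)$, that is $x\mapsto c^{2\epsilon}x+a'$ with $c^{2\epsilon}\neq1$. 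Hence each fibre over a $j$-fixed point contains exactly one $j$-fixed point, and the fixed locus loses a dimension at each of the $m$ steps. For the other simple reflections the dilation is a ratio of two diagonal entries and may equal $1$, which is why only the $s_0$/$s_g$ steps can be counted.
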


To prove this proposition, we need to analyze several $\mathbb A^1$-fibrations in Theorem \ref{simple}. For this purpose, we need the following lemma.
\begin{lemm}
\label{rank one action}
Let $c\in\mathbb F_{p^2}^\times$ satisfy $c^{p+1}=1$, and set $z=\diag([c],\ldots,[c],[c^p],\ldots,[c^p])$, where each of $[c]$ and $[c^p]$ occurs $g$ times.
Let $j=zu^+$ for some $u^+\in\J^1\cap P_J^+$, where $J=\tS\setminus\{s_0,s_g\}$, and let $yI\in G(\bQp)/I$ be a point fixed by $j$.
Let $s\in\{s_0,s_g\}$.
Then there exist $a\in\overline{\mathbb F}_p$ and $\epsilon\in\{\pm1\}$ such that
$$
jyu(x)sI=yu(c^{2\epsilon}x+a)sI.
$$
\end{lemm}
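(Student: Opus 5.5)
We follow the computation in the proof of Theorem \ref{main theo GL}, now without assuming that $y$ lies in $P_J$. The idea is to move $j$ past $y$ using the fixed-point hypothesis, and then compute the action of the resulting element on the $\mathbb A^1$-family $\{u(x)sI\}$.

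Since $jyI=yI$, the element $y^{-1}jy$ lies in $I$. Write $y^{-1}jy=(y^{-1}zy)(y^{-1}u^+y)$. By definition $s\in J'\coloneqq\tS\setminus J$. We analyse how an element $i\in I$ acts on the line $IsI/I=\{u(x)sI\}$, which is an $\mathbb A^1$. By the standard root-group description of $I$ (Iwahori factorization), the action of $i$ on $IsI/I$ factors through the image of $i$ in $I/(I\cap sIs^{-1})$, twisted by the torus part. Concretely, $iu(x)sI=u(\chi(i)x+a(i))sI$. Here $\chi(i)=\alpha_s(\overline t)$, where $\overline t$ is the reduction of the torus component of $i$ and $\alpha_s$ is the root of $s$ (the affine root $\tilde\alpha$ with $s=s_{\tilde\alpha}$). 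The term $a(i)$ is the reduction of the relevant root coordinate of $i$. The coefficient of $x$ is therefore a character of $I$ evaluated on its torus part modulo the pro-unipotent radical.

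It remains to compute $\chi(y^{-1}jy)$. The map $\chi\colon I\to\overline{\mathbb F}_p^\times$ factors through $I/I^+\cong T(\overline{\mathbb F}_p)$. We need the torus part of $y^{-1}jy$ modulo $I^+$. Since $u^+\in P_J^+$ and $z$ is central modulo $P_J^+$, the first worry is that $y$ need not lie in $P_J$. However, $j\in\J^1\cap P_J$ and $y^{-1}jy\in I$, so the semisimple part of $\overline{y^{-1}jy}\in T(\overline{\mathbb F}_p)$ is conjugate to that of $j$. I expect this to be the main obstacle. The reduction of $j$ in $\overline P_J$ is the central element $\overline z$. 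The reduction of $y^{-1}jy$ lies in the reductive quotient of $\bigcap$ of the relevant parahorics. To control it, we use that $j$ is topologically $zu^+$ with $u^+$ pro-unipotent: $j$ is congruent to $z$ modulo a pro-$p$ group. The image of $y^{-1}jy$ in $I/I^+=T(\overline{\mathbb F}_p)$ has $p'$-part determined by the $p'$-part of $j$, since the $p'$-part of an element of a profinite group is well defined and is preserved by continuous homomorphisms. The $p'$-part of $j$ is $\sigma$-conjugate within $P_J$ to $z$. Hence the image of $y^{-1}jy$ in $T(\overline{\mathbb F}_p)$, which is of order prime to $p$, is $W$-conjugate to $\overline z=\diag(c,\ldots,c,c^p,\ldots,c^p)=\diag(c,\ldots,c,c^{-1},\ldots,c^{-1})$, using $c^{p+1}=1$. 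Its eigenvalues are therefore $c^{\pm1}$, with the Weyl group of $\Sp$ pairing them.

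Finally we evaluate $\alpha_s$. For $s=s_g$ the root is $\chi_{g+1,g}$ (long root $e_{g+1}-e_g$, which in symplectic coordinates is $-2e_g$). For $s=s_0$ it is $\chi_{1,2g}$, that is $2e_1$. On a torus element $\diag(t_1,\ldots,t_g,t_g^{-1},\ldots,t_1^{-1})$ with each $t_k\in\{c,c^{-1}\}$, the long root evaluates to $t_g^{-2}$ or $t_1^{2}$, which lies in $\{c^{2},c^{-2}\}$. This gives $\chi(y^{-1}jy)=c^{2\epsilon}$ for some $\epsilon\in\{\pm1\}$. Setting $a=a(y^{-1}jy)$, we obtain $jyu(x)sI=y\,(y^{-1}jy)\,u(x)sI=yu(c^{2\epsilon}x+a)sI$, as claimed. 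The step that needs the most care is justifying the identification of the torus part modulo $I^+$ via $p'$-parts. If that route is troublesome, I would instead use an explicit Iwahori decomposition of $y^{-1}jy$ in $\GSp_{2g}$ and compute the diagonal entries modulo $p$ directly as eigenvalues of the reduction of $j$.
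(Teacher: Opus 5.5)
Your skeleton matches the paper's proof. From $jyI=yI$ you get $y^{-1}jy\in I\cap\Sp_{2g}(\bQp)$. An element of $I$ acts on $IsI/I=\{u(x)sI\}$ by $x\mapsto\alpha_s(\bar t)x+a$, where $\bar t$ is the diagonal of its reduction modulo $p$. This gives $c_{11}^2$ for $s=s_0$ and $c_{gg}^{-2}$ for $s=s_g$, which is the content of the paper's ``direct matrix calculation''. What remains is to show that these diagonal entries lie in $\{c,c^{-1}\}$. That is the only non-formal point, and your primary $p'$-part route does not establish it as written.

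First, the $p'$-part of $j$ would be \emph{conjugate} to $z$ (via a profinite Schur--Zassenhaus argument in $\langle z\rangle(\J^1\cap P_J^+)$), not $\sigma$-conjugate; $\sigma$-conjugation preserves neither $p'$-parts nor eigenvalues. More seriously, suppose you grant $j_{p'}=nzn^{-1}$ with $n\in\J^1\cap P_J^+$. The torus part of $y^{-1}jy$ is then the image in $I/I^+$ of $(y^{-1}n)z(y^{-1}n)^{-1}$. Since $y^{-1}n$ is essentially arbitrary in $G(\bQp)$, no homomorphism carries $\bar z\in\overline P_J$ to this torus part. This is exactly the difficulty created by $y\notin P_J$, and your ``hence $W$-conjugate to $\bar z$'' does not follow. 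To finish this route you would still need an eigenvalue argument.

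The fallback you mention at the end is precisely the paper's proof, and it makes the $p'$-detour unnecessary:
\begin{enumerate}
\item The matrices $y^{-1}jy$ and $j$ are conjugate in $\GL_{2g}(\bQp)$ and both integral, so they have the same characteristic polynomial in $\breve{\Z}_p[T]$, and reduction modulo $p$ commutes with taking it.
\item Since $u^+\in P_J^+$ reduces to $\begin{pmatrix}1_g&0\\ *&1_g\end{pmatrix}$, the reduction of $j=zu^+$ is block lower triangular with diagonal blocks $c\cdot 1_g$ and $c^p\cdot 1_g$.
\item The reduction $(c_{ij})$ of $y^{-1}jy$ is lower triangular because $y^{-1}jy\in I$. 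Its diagonal entries are therefore roots of $(T-c)^g(T-c^p)^g$, so $c_{ii}\in\{c,c^p\}=\{c,c^{-1}\}$ by $c^{p+1}=1$.
\item The symplectic condition gives $c_{ii}c_{2g+1-i,2g+1-i}=1$. This is needed to rule out a coefficient equal to $1$.
\end{enumerate}
With this in place of your $p'$-part paragraph, the proof is complete.
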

Note that $y$ need not belong to $P_J$. This lemma can also be used to prove Theorem \ref{main theo}, but we present the two arguments separately for clarity.

\begin{proof}
Since $yI$ is fixed by $j$, we have $y^{-1}jy\in I\cap\Sp_{2g}(\bQp)$. The reduction modulo $p$ of $j$ is of the form
$$
\begin{pmatrix}c\cdot1_g&0\\ *&c^p\cdot1_g\end{pmatrix}.
$$
Since $y^{-1}jy$ is conjugate to $j$, the characteristic polynomial of its reduction modulo $p$ is $(T-c)^g(T-c^p)^g$. Write the reduction of $y^{-1}jy$ as $(c_{ij})$. Then $c_{ii}\in\{c,c^p\}$ for every $i$. Since this matrix is lower triangular and symplectic, we have $c_{ii}c_{2g+1-i,2g+1-i}=1$. A direct matrix calculation gives, for some $a\in\overline{\mathbb F}_p$,
$$
jyu(x)sI=
\begin{cases}
yu(c_{11}^2(x+a))sI & \text{if }s=s_0,\\
yu(c_{gg}^{-2}(x+a))sI & \text{if }s=s_g.
\end{cases}
$$
Since $c_{11},c_{gg}\in\{c,c^p\}=\{c,c^{-1}\}$, for some $a\in\overline{\mathbb F}_p$ and $\epsilon\in\{\pm1\}$, we have
$$
jyu(x)sI=yu(c^{2\epsilon}x+a)sI.
$$
This finishes the proof.
\end{proof}

\begin{proof}[Proof of Proposition \ref{fixed locus positive Coxeter odd}]
By Corollary \ref{projection universal homeomorphism}, it suffices to prove the assertion for an irreducible component $C$ of $X_{\vp^\mu v}(\tau)$ stabilized by $j$.
By Proposition \ref{spherical} and Theorem \ref{simple}, the image $\overline j$ of $j$ in $\mathrm U_g(\mathbb F_p)$ stabilizes a Deligne--Lusztig variety $Y$ of Coxeter type for $\mathrm U_g$ over $\mathbb F_p$.
Writing $g=2m+1$, we first claim that, for every non-central element $h\in\mathrm U_g(\mathbb F_p)$, the locus in $Y$ fixed pointwise by $h$ has dimension at most $m-2<\dim Y=m$.

Set $V=\overline{\mathbb F}_p^{\,g}$, equip $V$ with the Hermitian form $\langle x,y\rangle=\sum_{i=1}^g x_i y_i^p$, and let $\delta\colon V\longrightarrow V$ be given by $\delta(x_1,\ldots,x_g)=(x_1^{p^2},\ldots,x_g^{p^2})$. By \cite[\S4]{Lusztig76Green}, $Y$ can be identified with
$$
\left\{[x]\in\mathbb P(V)\,\middle|\,\langle x,\delta^i(x)\rangle=0\text{ for }0\leq i<m,\ \langle x,\delta^m(x)\rangle\neq0\right\}.
$$

Let $[x]\in Y$ be fixed by $h$, and write $hx=cx$ for some $c\in\overline{\mathbb F}_p^\times$. Since $h$ commutes with $\delta$, we have
$$
h\delta^i(x)=c^{p^{2i}}\delta^i(x)\qquad(0\leq i<g).
$$
The vectors $x,\delta(x),\ldots,\delta^{g-1}(x)$ form a basis of $V$ by the proof of \cite[Proposition 26 (i)]{Lusztig76Green}. Thus, $h$ has a basis of eigenvectors and is semisimple. In particular, every point of $Y$ fixed by $h$ belongs to $\bigcup_W\mathbb P(W)\subseteq\mathbb P(V)$, where $W$ runs over the eigenspaces of $h$.
Let $d$ be the smallest positive integer such that $c^{p^{2d}}=c$. Since $\delta$ permutes the eigenspaces, we have $d\mid g$, and each eigenspace has dimension $\frac{g}{d}$. Moreover, since $h\in\mathrm U_g(\mathbb F_p)$ and $\langle x,\delta^m(x)\rangle\neq0$, we have $c^{1+p^{2m+1}}=1$.

If $d=1$, then $h=cI$ is central, a contradiction. Hence $d\geq3$, since $g$ is odd. Every fixed point lies in the projective space associated with an eigenspace of $h$, which has dimension $\frac{g}{d}-1\leq m-2$.
This proves the claim.

Set $J=\tS\setminus\{s_0,s_g\}$. It remains to consider the case
$$
j=zu^+,\qquad z=\diag(\underbrace{[c],\ldots,[c]}_g,\underbrace{[c^p],\ldots,[c^p]}_g),\qquad u^+\in\J^1\cap P_J^+,
$$
where $c\in\mathbb F_{p^2}^\times$ satisfies $c^{p+1}=1$ and $c\neq\pm1$.
Note that we have $v=s_gs_{g-1}\cdots s_1$ and
$$
\vp^\mu v=s_0(s_1s_0)(s_2s_1s_0)\cdots(s_{g-2}\cdots s_0)\tau.
$$

For $G=\GSp_{2g}$, we have the following relations in $W_a$:
$$
s_0s_1s_0s_1=s_1s_0s_1s_0,\qquad s_gs_{g-1}s_gs_{g-1}=s_{g-1}s_gs_{g-1}s_g,
$$
$$
s_0s_i=s_is_0\quad(2\leq i\leq g),\qquad s_gs_i=s_is_g\quad(0\leq i\leq g-2).
$$
Thus, for each element $w\in W_a$, the total number of occurrences of $s_0$ and $s_g$ is independent of the choice of its reduced expression.
We denote this number by $o(w)$.
Let $w'=s_iw\tau s_i\tau^{-1}$ for some $0\leq i\leq g$.
By symmetry, we may assume that $w\neq w'$ and $\ell(s_iw)<\ell(w)$.
If $\ell(w)=\ell(w')$, then both $w\tau s_i\tau^{-1}$ and $s_iw'$ are reduced expressions, and hence $o(w)=o(w')$.
If $\ell(w')=\ell(w)-2$, then $s_iw'\tau s_i\tau^{-1}$ is a reduced expression of $w$. Hence $o(w')=o(w)$ if $1\leq i\leq g-1$, and $o(w')=o(w)-2$ if $i=0$ or $g$.

Set $w_0=\vp^\mu v$, and let $w_1,\ldots,w_r$ be as in Theorem \ref{simple}.
For $u\in W_a$, we extend the definition of $o$ to $W_a\tau$ by setting $o(u\tau):=o(u)$.
Then $o(w_0)=g-1=2m$ and $o(w_r)=0$.
It follows from the above argument that the set
$$
\left\{0\leq k<r\,\middle|\,\ell(w_{k+1})=\ell(w_k)-2,\ w_{k+1}=s_iw_ks_i\text{ for some }i\in\{0,g\}\right\}
$$
has cardinality $m$.
For $0\leq k<r$, let $f_k\colon X_{w_k}(\tau)\longrightarrow X_{w_{k+1}}(\tau)$ be the map provided by Proposition \ref{DL method prop}. Since $f_k$ is $\J$-equivariant, we have
$$
X_{w_k}(\tau)^j\subseteq f_k^{-1}\bigl(X_{w_{k+1}}(\tau)^j\bigr),
$$
where $X_w(\tau)^j$ denotes the locus in $X_w(\tau)$ fixed by $j$.
Moreover, if $k$ belongs to the above set, it follows from Lemma \ref{rank one action} and $c\neq\pm1$ that for every $y\in X_{w_{k+1}}(\tau)^j$, the fixed locus in the fiber $f_k^{-1}(y)$ consists of a single point.
Thus, in this case, the restriction of $f_k$ to $X_{w_k}(\tau)^j$ is (universally) bijective, and hence
$$
\dim X_{w_k}(\tau)^j=\dim X_{w_{k+1}}(\tau)^j.
$$
Since the above set has cardinality $m\ge 2$, we obtain
$$
\dim X_{\vp^\mu v}(\tau)^j\leq\dim X_{\vp^\mu v}(\tau)-m\leq\dim X_{\vp^\mu v}(\tau)-2.
$$
This completes the proof.
\end{proof}

\subsection{The case of $\operatorname{GSO}_{2g}$}
Let $G=\operatorname{GSO}_{2g}$ and assume that $g\geq4$ is even.
The main goal of this subsection is to prove the following theorem.
\begin{theo}
\label{main theo GSO}
Assume that $w\in\SAdm(\mu)$ has positive Coxeter part, $\supp_\sigma(w)=\tS$ and $p\geq5$.
There exists an open dense subscheme $U\subseteq\pi(X_w(\tau))$ such that, for every closed point $xK\in U$, if $j\in\J^1$ has finite order and $jxK=xK$, then $j=\pm1$.
\end{theo}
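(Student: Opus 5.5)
We follow the proof of Theorem \ref{main theo} closely. By Corollary \ref{projection universal homeomorphism} and $\J^1$-equivariance of $\pi$, it suffices to treat $X_w(\tau)$. Let $w_1,\ldots,w_r$ be as in Theorem \ref{simple} and set $J=\supp_\sigma(w_r)$, which is very special with respect to $\tau$. By Proposition \ref{spherical} and Theorem \ref{simple}, there is an irreducible component $C$ whose stabilizer in $\J^1$ is $\J^1\cap P_J$, and $\J$ acts transitively on components while normalizing $\J^1$. By Lemma \ref{finite quotient} and the closedness of fixed loci, it is enough to show that any finite-order $j\in\J^1\cap P_J$ fixing $C$ pointwise equals $\pm1$.

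\textbf{Step 1: identify $J$ and the reductive quotient.} For $\operatorname{GSO}_{2g}$ with $g=2m$ even, $\tau$ acts on the affine Dynkin diagram of type $\tilde D_g$ as an involution. It exchanges the two fork pairs $\{s_0,s_1\}\leftrightarrow\{s_{g-1},s_g\}$ appropriately and reverses the chain $s_2,\ldots,s_{g-2}$. We will compute the very special $\tau$-stable subset using \cite[Proposition 2.2.5]{HZZ21}. We expect to get $J=\tS\setminus\{s_m\}$, with absolute root datum of type $D_m\times D_m$ interchanged by $\Ad(\tau)\circ\sigma$. This would give $\overline P_J^1\cong\operatorname{Res}_{\mathbb F_{p^2}/\mathbb F_p}\operatorname{SO}_{g}$ or a form of it, by the same lattice computation as in Lemma \ref{lemm:GSp-red-quot}. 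In particular the center of $(\overline P_J^1)^{\Ad(\tau)\circ\sigma}$ should be $\{\pm1\}$.

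If instead the very special subset removes two nodes and yields a unitary-type factor with a larger center, we argue as in the odd-$g$ symplectic case. We write $\overline j=c$, lift it to $z=\diag([c],\ldots,[c^p],\ldots)$, and use the last $\mathbb A^1$-fibration of Theorem \ref{simple}, whose reflection $s\notin J$ by Theorem \ref{simple}(ii). As in Lemma \ref{rank one action}, $j$ then acts on the fiber $f^{-1}(yI)=\{yu(x)sI\}$ by $x\mapsto c^{\pm(p-1)}x+a$ or $x\mapsto c^{\pm2}x+a$. Pointwise fixing forces $c^{p-1}=1$ or $c^2=1$, which together with the unitary condition gives $c=\pm1$.

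\textbf{Step 2: apply Proposition \ref{center}.} Since $j$ fixes $Y(w_r)$ pointwise, $\overline j$ is central, hence $\overline j=\pm1$ after Step 1. Thus $j=\pm u^+$ with $u^+\in\J^1\cap P_J^+$ of finite order.

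\textbf{Step 3: torsion-freeness.} We will show that $\J^1\cap P_J^+$ embeds in the corresponding pro-unipotent group for $\GL_{2g}$, which is conjugate to $V_{p,1}$. Then Lemma \ref{lemm:torsion-free} with $p\ge5$ gives $u^+=1$, so $j=\pm1$.

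\textbf{Main obstacle.} The main difficulty is Step 1: correctly determining the $\tau$-action on $\tilde D_g$ for this $\tau$, the very special subset, and the rational structure of $\overline P_J^1$, including its center. We also need the pro-unipotent radical inclusion into the $\GL_{2g}$ one. This inclusion holds if $P_J\cap\operatorname{SO}_{2g}$ is contained in a $\GL_{2g}$-parahoric conjugate to the one for $\tS\setminus\{s_0,s_g\}$; this should be checkable by explicit lattice chains.
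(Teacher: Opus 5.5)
You have the paper's strategy: reduce to a component with stabilizer $\J^1\cap P_J$, pass $j$ to the reductive quotient, apply Proposition~\ref{center}, then use torsion-freeness of $\J^1\cap P_J^+$. However, the structural input on your main line is wrong, and Step~2 fails as written.

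\textbf{Step 1 picks the wrong $J$.} The subset $\tS\setminus\{s_m\}$ is a maximal $\tau$-stable subset, but it is not very special. In the relative local Dynkin diagram of $J^1_\tau$:
\begin{itemize}
\item the $\tau$-orbits $\{s_0,s_g\}$ and $\{s_1,s_{g-1}\}$ form a fork of short nodes;
\item the orbits $\{s_i,s_{g-i}\}$ form a chain of short nodes;
\item the fixed orbit $\{s_m\}$ is a long node joined to its neighbour(s) by a double bond.
\end{itemize}
Deleting $\{s_m\}$ therefore leaves a relative system of type $D_m$, strictly smaller than the full finite relative root system. The very special subsets are $\tS\setminus\{s_0,s_g\}$ and $\tS\setminus\{s_1,s_{g-1}\}$ (Lemma~\ref{reductive quotient GSO}). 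For these, $\overline P_J^1\cong\mathrm U_g$, so Proposition~\ref{center} only gives $\overline j=c$ with $c^{p+1}=1$. That is a central subgroup of order $p+1$, not $\{\pm1\}$.

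\textbf{Consequence.} The claim ``$\overline j=\pm1$ after Step~1'' is false. The fiber argument you list as a contingency is not optional; it is the heart of the proof, and it is exactly what the paper does.

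\textbf{What the contingency still needs.}
\begin{enumerate}
\item \emph{Reduce to $J=\tS\setminus\{s_0,s_g\}$.} Use the element $\omega\in\Omega$, which commutes with $\tau$ and swaps $s_0\leftrightarrow s_1$ and $s_{g-1}\leftrightarrow s_g$. For $J=\tS\setminus\{s_1,s_{g-1}\}$ your lift $z=\diag([c],\ldots,[c],[c^p],\ldots,[c^p])$ is not central modulo $P_J^+$: it acts on the root subgroups of $s_0$ and $s_g$ by $c^{\pm(1-p)}$. Moreover, the last reflection of the reduction would then lie in $\{s_1,s_{g-1}\}$.
\item \emph{Do the fiber computation with $\operatorname{GSO}$ root elements.} Lemma~\ref{rank one action} is a $\GSp$ statement and cannot simply be cited. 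Use $u(x)=1_{2g}+p[x](E_{1,2g-1}-E_{2,2g})$ for $s=s_0$ and $u(x)=1_{2g}+[x](E_{g+1,g-1}-E_{g+2,g})$ for $s=s_g$. Write $j=zu^+$ and conjugate $u^+$ and $z^{-1}y^{-1}zy$ by $y$ into $P_J^+\cap\operatorname{SO}_{2g}$. Then $j$ acts on the fiber $\{yu(x)sI\}$ by $x\mapsto c^{\pm(1-p)}(x+a)$, where $a$ comes from the $(1,g-1)$-entries of the off-diagonal blocks.
\item \emph{Conclude.} Evaluating at $x=0,1$ gives $c^{p-1}=1$; together with $c^{p+1}=1$ this forces $c=\pm1$. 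Your Step~3 then finishes as in the paper.
\end{enumerate}
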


For a $\Q_p$-algebra $R$, define
$$
J_\tau^1(R)=\left\{j\in\operatorname{SO}_{2g}(R\otimes_{\Q_p}\bQp)\,\middle|\,j^{-1}\tau\sigma(j)=\tau\right\},
$$
where $\sigma$ acts trivially on $R$.
This defines a reductive group $J_\tau^1$ over $\Q_p$, which is an inner form of $\operatorname{SO}_{2g}$, and we set $\J^1=J_\tau^1(\Q_p)$.
For a $\tau$-stable subset $J\subsetneq\tS$, we define $\overline P_J^1$ as in \S3.2, with $\Sp_{2g}$ replaced by $\operatorname{SO}_{2g}$.

For $1\leq i<g$, let $s_i$ be the permutation matrix associated with $(i\ i+1)(2g-i\ 2g+1-i)$.
Set
$$
 s_g=\begin{pmatrix}1_{g-2}&0&0&0\\0&0&1_2&0\\0&1_2&0&0\\0&0&0&1_{g-2}\end{pmatrix},\qquad
 s_0=\tau s_g\tau^{-1}=\begin{pmatrix}0&0&p\cdot1_2\\0&1_{2g-4}&0\\p^{-1}\cdot1_2&0&0\end{pmatrix}.
$$
We denote their images in $\tW$ by the same symbols.
Then $\bS=\{s_1,s_2,\ldots,s_g\}$ and $\tS=\bS\cup\{s_0\}$.
Moreover, $\tau s_i\tau^{-1}=s_{g-i}$ for $0\leq i\leq g$.
Set
$$
\omega=\begin{pmatrix}
0&0&0&0&0&p\\
0&1_{g-2}&0&0&0&0\\
0&0&0&1&0&0\\
0&0&1&0&0&0\\
0&0&0&0&1_{g-2}&0\\
p^{-1}&0&0&0&0&0
\end{pmatrix}\in\operatorname{SO}_{2g}(\bQp).
$$
A direct calculation shows that $\omega\tau=\tau\omega$ and that conjugation by $\omega$ exchanges $s_0$ with $s_1$ and $s_{g-1}$ with $s_g$ and fixes all the other elements of $\tS$.
In particular, $\omega$ normalizes $I$ and represents an element of $\Omega$.

\begin{lemm}
\label{reductive quotient GSO}
Assume that $J$ is a very special subset of $\tS$ with respect to $\tau$; equivalently, we have $J=\tS\setminus\{s_0,s_g\}$ or $J=\tS\setminus\{s_1,s_{g-1}\}$.
These two subsets are conjugate by $\omega$.
As a connected reductive group over $\mathbb F_p$, $\overline P_J^1$ is isomorphic to $\mathrm U_g.$
\end{lemm}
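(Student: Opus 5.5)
The plan is to follow the proofs of Lemma \ref{lemm:GLveryspecial} and Lemma \ref{reductive quotient symplectic}: read off the absolute root datum of $\overline P_J^1$ from the affine Dynkin diagram with the nodes of $\tS\setminus J$ removed, then determine the Frobenius $\Ad(\tau)\circ\sigma$ from its action on the cocharacter lattice.

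\textbf{Step 1: very special subsets and the role of $\omega$.} First identify the very special subsets. The group $\J^1$ is an inner form of $\operatorname{SO}_{2g}$ with $g$ even, and the Frobenius acts on $\tS$ through $\tau$, that is, by $s_i\mapsto s_{g-i}$. In the affine diagram of type $\tilde D_g$, $s_0,s_1$ and $s_{g-1},s_g$ are the four extremal nodes. A $\tau$-stable complement of the kind produced by \cite[Proposition 2.2.5]{HZZ21} must consist of one $\tau$-orbit of two extremal nodes on opposite ends. This leaves exactly $\{s_0,s_g\}$ and $\{s_1,s_{g-1}\}$. Since conjugation by $\omega$ swaps $s_0\leftrightarrow s_1$ and $s_{g-1}\leftrightarrow s_g$, it carries one complement to the other. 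Because $\omega$ commutes with $\tau$ and normalizes $I$, conjugation by $\omega$ gives an isomorphism of the two parahorics compatible with $\Ad(\tau)\circ\sigma$. It therefore suffices to treat $J=\tS\setminus\{s_0,s_g\}$.

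\textbf{Step 2: the absolute group.} For $J=\tS\setminus\{s_0,s_g\}$, we have $J=\{s_1,\dots,s_{g-1}\}$, which is the type $A_{g-1}$ chain inside $\bS$. So $\overline P_J^1$ is the Levi subgroup of $\operatorname{SO}_{2g}$ given by the block diagonal matrices $\diag(A,\Theta\,{}^tA^{-1}\Theta)$. This Levi is isomorphic to $\GL_g$, and its cocharacter lattice is $\Z^g$, via $(m_1,\dots,m_g)\mapsto(m_1,\dots,m_g,-m_g,\dots,-m_1)$.

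\textbf{Step 3: the Frobenius.} On this Levi, $\Ad(\tau)\circ\sigma$ sends $A$ to $\Theta\,{}^t\sigma(A)^{-1}\Theta$, up to the harmless signs coming from $\tau$. On cocharacters it acts by $(m_1,\dots,m_g)\mapsto(-m_g,\dots,-m_1)$, exactly as in the proof of Lemma \ref{reductive quotient symplectic}. This is the Frobenius of the quasi-split unitary form of $\GL_g$ attached to $\mathbb F_{p^2}/\mathbb F_p$. It acts on the Dynkin diagram of $A_{g-1}$ by the nontrivial involution $s_i\mapsto s_{g-i}$, and on the center $\Gm$ by $t\mapsto\sigma(t)^{-1}$. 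Hence $\overline P_J^1\cong\mathrm U_g$.

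\textbf{Main obstacle.} The one point needing care is that the $\F_p$-form is genuinely determined by this data. Forms of $\GL_g$ are classified by the action on the based root datum up to inner twisting, and every connected reductive group over a finite field is quasi-split by Lang's theorem. The explicit action on $\Z^g$, including the minus sign on the center, then pins down the quasi-split unitary group, with no possibility of $\operatorname{Res}_{\mathbb F_{p^2}/\mathbb F_p}\GL_{g/2}$ or similar. I would verify the matrix identity $\tau\,\diag(A,\Theta\,{}^tA^{-1}\Theta)\,\tau^{-1}$ explicitly from the given forms of $\tau$ and $\Theta$ to confirm this.
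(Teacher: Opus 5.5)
Your proposal is correct and follows the paper's proof: the paper also reads off that the absolute root datum of $\overline P_J^1$ is that of $\GL_g$, and that $\Ad(\tau)\circ\sigma$ induces the nontrivial diagram automorphism and acts on the center by $c\mapsto c^{-p}$, which forces $\mathrm U_g$. Your explicit block computation and your reduction to $J=\tS\setminus\{s_0,s_g\}$ via $\omega$ just spell out details the paper leaves implicit; the reduction is legitimate since $\omega\in\operatorname{SO}_{2g}(\Q_p)$ is $\sigma$-fixed and commutes with $\tau$. One small point: the worry in your last paragraph is moot, because $\operatorname{Res}_{\mathbb F_{p^2}/\mathbb F_p}\GL_{g/2}$ is not an $\mathbb F_p$-form of $\GL_g$ at all.
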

\begin{proof}
For either subset in the statement, the absolute root datum is that of $\GL_g$, and $\Ad(\tau)\circ\sigma$ induces its nontrivial diagram automorphism and acts on its center by $c\mapsto c^{-p}$.
Hence $\overline P_J^1\cong\mathrm U_g$.
\end{proof}

Moreover, if $p\geq5$, then $\J^1\cap P_J^+$ is torsion-free, since it is contained in the corresponding subgroup for $\GL_{2g}$, which is isomorphic to $V_{p,1}$.

The proof of Theorem \ref{main theo GSO} below is similar to that of Theorem \ref{main theo GL}.
We omit some details that are identical to those in that proof.
For $s\in\{s_0,s_g\}$ and $x\in\overline{\mathbb F}_p$, set $u(x)=1_{2g}+p[x](E_{1,2g-1}-E_{2,2g})$ if $s=s_0$ and $u(x)=1_{2g}+[x](E_{g+1,g-1}-E_{g+2,g})$ if $s=s_g$.

\begin{proof}[Proof of Theorem \ref{main theo GSO}]
By Corollary \ref{projection universal homeomorphism} and the $\J^1$-equivariance of $\pi$, it suffices to prove the same assertion for $X_w(\tau)$.
Let $w_1,\ldots,w_r$ be as in Theorem \ref{simple}.
By Proposition \ref{spherical} and Theorem \ref{simple}, there is an irreducible component $C$ of $X_w(\tau)$ whose stabilizer in $\J^1$ is $\J^1\cap P_J$, where $J=\tS\setminus\{s_0,s_g\}$ or $J=\tS\setminus\{s_1,s_{g-1}\}$.
Since the two subsets are conjugate by $\omega$, we may assume that $J=\tS\setminus\{s_0,s_g\}$.

Note that $\J$ acts transitively on the irreducible components of $X_w(\tau)$ and normalizes $\J^1$.
By Lemma \ref{finite quotient}, it suffices to show that if $j\in\J^1\cap P_J$ has finite order and fixes $C$ pointwise, then $j=\pm1$.
Let $j$ be such an element.
By Proposition \ref{spherical} and Theorem \ref{simple}, the element $j$ fixes $Y(w_r)$ pointwise, where $Y(w_r)=\{xI\in P_J/I\mid x^{-1}\tau\sigma(x)\in Iw_rI\}$.
Let $\overline j$ be the image of $j$ under the isomorphism
$$
(\J^1\cap P_J)/(\J^1\cap P_J^+)\xrightarrow{\sim}(\overline P_J^1)^{\Ad(\tau)\circ\sigma}\cong\mathrm U_g(\mathbb F_p).
$$
By Proposition \ref{center}, we have $\overline j=c$ for some $c\in\mathbb F_{p^2}^\times$ satisfying $c^{p+1}=1$.
Set $z=\diag([c],\ldots,[c],[c^p],\ldots,[c^p])$, where each of $[c]$ and $[c^p]$ occurs $g$ times.
Then $j=zu^+$ for some $u^+\in\J^1\cap P_J^+$.

Let $s\in\tS$ be the element corresponding to $k=r-1$ in Theorem \ref{simple} (i), so that $w_r=sw_{r-1}s$ and $\ell(w_r)=\ell(w_{r-1})-2$.
By Theorem \ref{simple} (ii), we have $s=s_0$ or $s=s_g$.
Let $f\colon X_{w_{r-1}}(\tau)\rightarrow X_{w_r}(\tau)$ be the morphism induced by Proposition \ref{DL method prop}.
For $yI\in Y(w_r)$, we have
$$
f^{-1}(yI)=\{yu(x)sI\mid x\in\overline{\mathbb F}_p\}.
$$
By assumption and Theorem \ref{simple}, $j$ fixes $f^{-1}(yI)$ pointwise.
Since $P_J^+$ is normal in $P_J$ and the image of $z$ in $P_J/P_J^+$ is central, both $y^{-1}u^+y$ and $z^{-1}y^{-1}zy$ lie in $P_J^+\cap\operatorname{SO}_{2g}(\bQp)$.
Hence
$$y^{-1}u^+y=\begin{pmatrix}1_g+pA&pB\\ C&1_g+pD\end{pmatrix},\qquad z^{-1}y^{-1}zy=\begin{pmatrix}1_g+pA'&pB'\\ C'&1_g+pD'\end{pmatrix}$$
for some $A,B,C,D,A',B',C',D'\in\operatorname{Mat}_g(\breve{\Z}_p)$.
Then
$$jyu(x)sI=
\begin{cases}
yu(c^{1-p}(x+\overline{b_{1,g-1}}+\overline{b'_{1,g-1}}))sI & \text{if }s=s_0,\\
yu(c^{p-1}(x+\overline{c_{1,g-1}}+\overline{c'_{1,g-1}}))sI & \text{if }s=s_g.
\end{cases}$$
Here $\overline{b_{1,g-1}}$, $\overline{c_{1,g-1}}$, $\overline{b'_{1,g-1}}$ and $\overline{c'_{1,g-1}}$ are the reductions modulo $p$ of the $(1,g-1)$-entries of $B$, $C$, $B'$ and $C'$, respectively, and are independent of $x$.
Since $j$ fixes $f^{-1}(yI)$ pointwise, the equations for $x=0$ and $x=1$ imply that $c^{p-1}=1$.
Together with $c^{p+1}=1$, this gives $c^2=1$, and hence $c=\pm1$.
Thus $z=\pm1$, and $u^+=z^{-1}j$ has finite order.
Since $\J^1\cap P_J^+$ is torsion-free, we have $u^+=1$ and $j=z=\pm1$.
\end{proof}

\section{The locus with $a$-number at least $2$}
Let $G=\GSp_{2g}$.
Throughout this section, assume that $g\geq 4$.
We use the notation in \S3.2.
We identify $X_\mu(\tau)$ with the set of Dieudonn\'e lattices $M\subseteq\bQp^{2g}$ satisfying $M\supseteq\tau\sigma(M)\supseteq pM$ and $M^\vee=dM$ for some $d\in\bQp^\times$.
For a $p$-divisible group $X$ over $\overline{\F}_p$, define its $a$-number by $a(X)=\dim_{\overline{\F}_p}\operatorname{Hom}_{\overline{\F}_p}(\alpha_p,X)$.
If $M$ is the Dieudonn\'e module of $X$, then $a(X)=a(M)\coloneqq\dim_{\overline{\F}_p}M/(\tau\sigma(M)+p(\tau\sigma)^{-1}(M))$.

Let $v$ be as in Remark \ref{dense Coxeter stratum}.
In this section, we study $X_\mu(\tau)'\coloneqq X_\mu(\tau)\setminus\pi(X_{\vp^\mu v}(\tau))$.
The $a=1$ locus in $X_\mu(\tau)$ coincides with $\pi(X_{\vp^\mu v}(\tau))$. Moreover, it is the $\J$-orbit of a top-dimensional $\J$-stratum (cf.\ \cite[Proposition 5.11]{CV18}). Thus, $X_\mu(\tau)'$ is the $a\geq2$ locus and is also a union of $\J$-strata.

\subsection{Irreducible components of $X_\mu(\tau)'$}

By Remark \ref{dense Coxeter stratum}, we already know that $X_\mu(\tau)'$ is equidimensional of codimension $1$ in $X_\mu(\tau)$. Set $\widetilde\nu_\tau=(0,1,0,1,\ldots,0,1)\in X_*(T)$, where $(0,1)$ occurs $g$ times. By Theorem \ref{flat bijection}, $\widetilde\nu_\tau$ corresponds to the unique $\J$-orbit of top-dimensional $\J$-strata in $X_\mu(\tau)$. 
The elements $\nu\in W_0\mu$ satisfying $\nu\leq\nu_\tau$ and $\langle\rho,\widetilde\nu_\tau-\nu\rangle=1$ are precisely $s_{2k}\widetilde\nu_\tau$ for $1\leq k\leq\left\lfloor\frac{g}{2}\right\rfloor$. Indeed, $\widetilde\nu_\tau-\nu$ is a non-negative integral sum of simple coroots, so the second condition implies that it is a simple coroot. Since $\nu\in W_0\mu$, it follows that $\nu=s_{2k}\widetilde\nu_\tau$ for some $1\leq k\leq\left\lfloor\frac{g}{2}\right\rfloor$.
Since $X_\mu(\tau)'$ is the complement of the unique top-dimensional
$\mathbb J$-orbit, these codimension-one $\mathbb J$-strata are contained
in $X_\mu(\tau)'$. We construct the corresponding small cocharacters
$\lambda^{(k)}$ and show that the irreducible components of
$X_\mu(\tau)'$ are precisely the $\mathbb J$-translates of
$\overline{X_\mu^{\lambda^{(k)}}(\tau)}$.

For $0\leq i\leq g$, let $\alpha_i\in\Pi$ be such that $s_{\widetilde{\alpha}_i}=s_i$. For $\lambda\in X_*(T)$, write $\lambda(i)$ for its $i$-th coordinate and set $\lambda_i=\lambda_{\alpha_i}$. Explicitly, $\lambda_0=\lambda(1)-\lambda(2g)-1$ and $\lambda_i=\lambda(i+1)-\lambda(i)$ for $1\leq i\leq g$. Then $\lambda\in\cAm$ is small if and only if $\min\{\lambda_i,\lambda_{g-i}\}\leq0$ for every $0\leq i\leq g$.

Assume that $g=2m$.
For $1\leq k<m$, define $\lambda^{(k)}$, up to a central cocharacter, by $\lambda_0^{(k)}=0$, $\lambda_{2m}^{(k)}=1$, $\lambda_m^{(k)}=1-m$, and
$$
(\lambda_i^{(k)},\lambda_{2m-i}^{(k)})=
\begin{cases}
(0,0)&\text{if $i=m-k$},\\
(1,0)&\text{if $i<m-k$ and $i$ is odd, or if $i>m-k$ and $i$ is even},\\
(0,1)&\text{if $i<m-k$ and $i$ is even, or if $i>m-k$ and $i$ is odd}
\end{cases}
$$
for $1\leq i<m$.
Define $\lambda^{(m)}$, up to a central cocharacter, by $\lambda_0^{(m)}=-m$, $\lambda_{2m}^{(m)}=1-m$, $\lambda_m^{(m)}=0$, and
$$
(\lambda_i^{(m)},\lambda_{2m-i}^{(m)})=
\begin{cases}
(1,0)&\text{if $i$ is odd},\\
(0,1)&\text{if $i$ is even}
\end{cases}
$$
for $1\leq i<m$.

Assume that $g=2m+1$.
For $1\leq k<m$, define $\lambda^{(k)}$, up to a central cocharacter, by $\lambda_0^{(k)}=-m$, $\lambda_{2m+1}^{(k)}=1-m$, and
$$
(\lambda_i^{(k)},\lambda_{2m+1-i}^{(k)})=
\begin{cases}
(0,0)&\text{if $i=k$},\\
(1,0)&\text{if $i<k$ and $i$ is odd, or if $i>k$ and $i$ is even},\\
(0,1)&\text{if $i<k$ and $i$ is even, or if $i>k$ and $i$ is odd}
\end{cases}
$$
for $1\leq i\leq m$.
Define $\lambda^{(m)}$, up to a central cocharacter, by $\lambda_0^{(m)}=0$, $\lambda_{2m+1}^{(m)}=1$, $\lambda_m^{(m)}=-\left\lfloor\frac{m}{2}\right\rfloor$, $\lambda_{m+1}^{(m)}=-\left\lceil\frac{m}{2}\right\rceil$, and
$$
(\lambda_i^{(m)},\lambda_{2m+1-i}^{(m)})=
\begin{cases}
(1,0)&\text{if $i$ is odd},\\
(0,1)&\text{if $i$ is even}
\end{cases}
$$
for $1\leq i<m$.

Choosing each $\lambda^{(k)}$ so that $\lambda^{(k)}(1)=0$, we can describe them as follows.
If $g=2m$, then $\lambda^{(k)}$ for $1\leq k<m$ and $\lambda^{(m)}$ are given by
$$
\lambda^{(k)}(i)=
\begin{cases}
\left\lfloor\frac{i}{2}\right\rfloor&\text{if $1\leq i\leq m-k$},\\
\left\lfloor\frac{i-1}{2}\right\rfloor&\text{if $m-k<i\leq m$},\\
-1-\left\lfloor\frac{g-i-1}{2}\right\rfloor&\text{if $m<i\leq m+k$},\\
-1-\left\lfloor\frac{g-i}{2}\right\rfloor&\text{if $m+k<i\leq g$},
\end{cases}
\quad
\lambda^{(m)}(i)=
\begin{cases}
\left\lfloor\frac{i}{2}\right\rfloor&\text{if $1\leq i\leq m$},\\
\left\lfloor\frac{i-1}{2}\right\rfloor&\text{if $m<i\leq g$}
\end{cases}
$$
with $\lambda^{(k)}(2g+1-i)=-1-\lambda^{(k)}(i)$ for $1\leq k<m$ and $\lambda^{(m)}(2g+1-i)=m-1-\lambda^{(m)}(i)$.
If $g=2m+1$, then $\lambda^{(k)}$ for $1\leq k<m$ and $\lambda^{(m)}$ are given by
$$
\lambda^{(k)}(i)=
\begin{cases}
\left\lfloor\frac{i}{2}\right\rfloor&\text{if $1\leq i\leq k$},\\
\left\lfloor\frac{i-1}{2}\right\rfloor&\text{if $k<i\leq g-k$},\\
\left\lfloor\frac{i-2}{2}\right\rfloor&\text{if $g-k<i\leq g$},
\end{cases}
\quad
\lambda^{(m)}(i)=
\begin{cases}
\left\lfloor\frac{i}{2}\right\rfloor&\text{if $1\leq i\leq m$},\\
0&\text{if $i=m+1$},\\
-\left\lfloor\frac{g-i+2}{2}\right\rfloor&\text{if $m+1<i\leq g$}
\end{cases}
$$
with $\lambda^{(k)}(2g+1-i)=m-1-\lambda^{(k)}(i)$ for $1\leq k<m$ and $\lambda^{(m)}(2g+1-i)=-1-\lambda^{(m)}(i)$.
\begin{lemm}
\label{lemmma:codimone}
The set $\{\lambda^{(k)}\mid1\leq k\leq\left\lfloor\frac{g}{2}\right\rfloor\}$ is a complete set of representatives for the $\Omega$-orbits of those $\lambda\in\cAm^{\mathrm{sm}}$ for which $X_\mu^\lambda(\tau)$ has codimension $1$ in $X_\mu(\tau)$.
\end{lemm}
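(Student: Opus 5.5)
Via the bijection $\flat$ of Theorem \ref{flat bijection} and the dimension formula $\dim X_\mu^\lambda(\tau)=\langle\rho,\mu+\lambda^\flat\rangle$, the problem becomes a computation with the elements $s_{2k}\widetilde\nu_\tau$. Since $\flat\colon\Omega\backslash\cAm^{\mathrm{sm}}\to\{\nu\in W_0\mu\mid\nu\le\nu_\tau\}$ is bijective for $\GSp_{2g}$, the $\Omega$-orbits of small $\lambda$ with $X_\mu^\lambda(\tau)$ of codimension $1$ correspond exactly to those $\nu$ with $\langle\rho,\mu+\nu\rangle=\dim X_\mu(\tau)-1$. The top-dimensional orbit corresponds to $\widetilde\nu_\tau$, so $\dim X_\mu(\tau)=\langle\rho,\mu+\widetilde\nu_\tau\rangle$. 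The codimension-one condition is therefore $\langle\rho,\widetilde\nu_\tau-\nu\rangle=1$. As already explained before the lemma, these $\nu$ are precisely $s_{2k}\widetilde\nu_\tau$ for $1\le k\le\lfloor g/2\rfloor$, which are pairwise distinct. It thus suffices to show three things:
\begin{enumerate}[(a)]
\item each $\lambda^{(k)}$ lies in $\cAm$;
\item each $\lambda^{(k)}$ is small;
\item $(\lambda^{(k)})^\flat=s_{2k'}\widetilde\nu_\tau$ for some bijection $k\mapsto k'$ of $\{1,\ldots,\lfloor g/2\rfloor\}$.
\end{enumerate}
Distinctness of the orbits $\Omega\lambda^{(k)}$ then follows from injectivity of $\flat$, and completeness follows from surjectivity together with the count.

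For (a), I would use $\cAm=\{\lambda\mid\tau(\lambda)-\lambda\in W_0\mu\}$. Here $\tau$ acts on $X_*(T)$ by $\lambda\mapsto(\lambda(g+1),\ldots,\lambda(2g),\lambda(1)+1,\ldots,\lambda(g)+1)$, up to the convention for $p^{\mu}$. So I compute $\lambda^\natural(i)=\lambda(i+g)-\lambda(i)$ for $i\le g$ from the explicit coordinates of $\lambda^{(k)}$ given above (normalized with $\lambda^{(k)}(1)=0$). I then check that these values are $0$ or $1$ with $\lambda^\natural(i)+\lambda^\natural(2g+1-i)=1$, i.e.\ $\lambda^\natural\in W_0\mu$. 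Using the relations $\lambda^{(k)}(2g+1-i)=c-\lambda^{(k)}(i)$, this reduces to comparing $\lambda(i)$ with $\lambda(g+1-i)$ entry by entry. I will split into the four cases $g$ even or odd, and $k<m$ or $k=m$.

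For (b), I use the criterion $\min\{\lambda_i,\lambda_{g-i}\}\le0$, which is visible from the defining tables. Each pair $(\lambda_i,\lambda_{g-i})$ is $(0,0)$, $(1,0)$ or $(0,1)$. The exceptional indices are $0$, $g$ and $m$ (and $m+1$ when $g$ is odd), where one entry is $\le0$ (e.g.\ $\lambda_0=0$ or $-m$, $\lambda_m=1-m\le0$). So smallness is immediate.

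The main work is (c). I compute $\epsilon_\lambda\in W_0$, determined by $\{\alpha\mid\lambda_\alpha\ge0\}=\epsilon_\lambda\Phi_+$. Concretely, $\epsilon_\lambda^{-1}$ sorts the coordinates of $\lambda$ into the order prescribed by the convention $\lambda_\alpha\ge0$: for positive roots ties are broken one way because of the shift by $1$, and for negative roots the other way. Then $\lambda^\flat=\epsilon_\lambda^{-1}\lambda^\natural$ is the $0/1$ vector $\lambda^\natural$ permuted by this sorting. The alternating floor-function patterns in $\lambda^{(k)}$ are designed so that the sorted $\lambda^\natural$ reads $(0,1,0,1,\ldots)$, except for a single swap at positions $2k',2k'+1$ coming from the defect index $m-k$ (resp.\ $k$). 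I expect this bookkeeping to be the main obstacle: correctly handling ties between equal coordinates in the first and second halves, and the boundary indices. I would first verify it for $g=4,5$ by hand to fix conventions, then argue in general by induction on the blocks of length two in which the coordinates increase by one.
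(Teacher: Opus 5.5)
Your proposal is correct and follows essentially the paper's proof. You verify $\lambda^{(k)}\in\cAm^{\mathrm{sm}}$ by computing $\lambda^\natural$, then compute $\lambda^\flat$ by sorting the coordinates of $\lambda$ (ties broken by decreasing index, computed inside $\GL_{2g}$) to obtain $(\lambda^{(k)})^\flat=s_{2k}\widetilde\nu_\tau$, so in fact $k'=k$, and you conclude via the bijectivity of $\flat$ in Theorem \ref{flat bijection}. One small fix: with the paper's $\tau$ one has $\tau(\lambda)=(\lambda(g+1)+1,\ldots,\lambda(2g)+1,\lambda(1),\ldots,\lambda(g))$, hence $\lambda^\natural(i)=\lambda(g+i)+1-\lambda(i)$ for $i\le g$; with the shift placed as you wrote it, the membership check in (a) would fail (e.g.\ for $\lambda^{(m)}$ when $g=2m$).
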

\begin{proof}
For any $\lambda\in X_*(T)$, $\lambda^{\natural} = \tau(\lambda)-\lambda$ satisfies
$$
\lambda^\natural(1)=\frac{1-\lambda_0+\lambda_g}{2},\qquad
\lambda^\natural(i+1)-\lambda^\natural(i)=\lambda_{g-i}-\lambda_i\quad(1\leq i<g),
$$
and
$$
\lambda^\natural(i)=\lambda^\natural(g+1-i),\qquad
\lambda^\natural(g+i)=1-\lambda^\natural(i)\quad(1\leq i\leq g).
$$

Set $\lambda=\lambda^{(k)}$.
If $g=2m$ and $1\leq k<m$, then
$$
\lambda^\natural(i)=
\begin{cases}
1&\text{if $i\leq m-k$ and $i$ is odd, or if $i>m-k$ and $i$ is even},\\
0&\text{otherwise}
\end{cases}
$$
for $1\leq i\leq m$.
If $g=2m$ and $k=m$, then
$$
\lambda^\natural(i)=
\begin{cases}
1&\text{if $i$ is odd},\\
0&\text{if $i$ is even}
\end{cases}
$$
for $1\leq i\leq m$.
If $g=2m+1$ and $1\leq k<m$, then
$$
\lambda^\natural(i)=
\begin{cases}
1&\text{if $i\leq k$ and $i$ is odd, or if $i>k$ and $i$ is even},\\
0&\text{otherwise}
\end{cases}
$$
for $1\leq i\leq m+1$.
If $g=2m+1$ and $k=m$, then
$$
\lambda^\natural(i)=
\begin{cases}
1&\text{if $i\leq m$ and $i$ is odd},\\
0&\text{otherwise}
\end{cases}
$$
for $1\leq i\leq m+1$.
Hence $\lambda\in\cAm^{\mathrm{sm}}$.

By Theorem \ref{flat bijection}, it remains to show that $\lambda^\flat=s_{2k}\widetilde\nu_\tau$ for $1\leq k\leq\left\lfloor\frac{g}{2}\right\rfloor$.
We only consider the case where $g=2m$ and $1\leq k<m$, as the other cases are similar.
First note that $\lambda^\flat$ can be computed by viewing $\lambda$ as a cocharacter of $\GL_{2g}$ (cf.\ \cite[\S 6.2]{Shimada6}).
The element $\epsilon_\lambda$ is determined by requiring that $\epsilon_\lambda^{-1}\lambda$ be dominant and that coordinates of equal value be ordered by decreasing index.
We then obtain $\lambda^\flat$ by arranging the coordinates of $\lambda^\natural$ in this order.
We may assume that $\lambda(1)=0$. Then $\lambda$ has exactly $g$ nonnegative coordinates, all lying between $0$ and $\left\lfloor\frac{m-1}{2}\right\rfloor$.
Namely,
$$
\begin{aligned}
\lambda=(&\underbrace{\left\lfloor\tfrac{1}{2}\right\rfloor,\ldots,\left\lfloor\tfrac{m-k}{2}\right\rfloor}_{m-k},
\underbrace{\left\lfloor\tfrac{m-k}{2}\right\rfloor,\ldots,\left\lfloor\tfrac{m-1}{2}\right\rfloor}_{k},
\ast,\ldots,\ast,\\
&\underbrace{\left\lfloor\tfrac{0}{2}\right\rfloor,\ldots,\left\lfloor\tfrac{m-k-1}{2}\right\rfloor}_{m-k},
\underbrace{\left\lfloor\tfrac{m-k-1}{2}\right\rfloor,\ldots,\left\lfloor\tfrac{m-2}{2}\right\rfloor}_{k},
\ast,\ldots,\ast).
\end{aligned}
$$
If $m$ is even, then
$$
\begin{array}{c@{\quad}c}
\begin{aligned}
\lambda^\natural=(&
\underbrace{1,0,\ldots,1,0}_{m-k},
\underbrace{0,1,\ldots,0,1}_{k},
\ast,\ldots,\ast,\\
&\underbrace{0,1,\ldots,0,1}_{m-k},
\underbrace{1,0,\ldots,1,0}_{k},
\ast,\ldots,\ast)
\end{aligned}
&
\begin{aligned}
\lambda^\natural=(&
\underbrace{1,0,\ldots,0,1}_{m-k},
\underbrace{1,0,\ldots,1}_{k},
\ast,\ldots,\ast,\\
&\underbrace{0,1,\ldots,1,0}_{m-k},
\underbrace{0,1,\ldots,0}_{k},
\ast,\ldots,\ast)
\end{aligned}
\end{array}
$$
for even and odd $k$, respectively.
If $m$ is odd, then
$$
\begin{array}{c@{\quad}c}
\begin{aligned}
\lambda^\natural=(&
\underbrace{1,0,\ldots,0,1}_{m-k},
\underbrace{1,0,\ldots,1,0}_{k},
\ast,\ldots,\ast,\\
&\underbrace{0,1,\ldots,1,0}_{m-k},
\underbrace{0,1,\ldots,0,1}_{k},
\ast,\ldots,\ast)
\end{aligned}
&
\begin{aligned}
\lambda^\natural=(&
\underbrace{1,0,\ldots,1,0}_{m-k},
\underbrace{0,1,\ldots,0}_{k},
\ast,\ldots,\ast,\\
&\underbrace{0,1,\ldots,0,1}_{m-k},
\underbrace{1,0,\ldots,1}_{k},
\ast,\ldots,\ast)
\end{aligned}
\end{array}
$$
for even and odd $k$, respectively.
Note that the coordinates denoted by $\ast$ are determined by the displayed coordinates.
Thus,
$$
\lambda^\flat=(\underbrace{0,1,\ldots,0,1}_{2k-2},0,0,1,1,
\underbrace{0,1,\ldots,0,1}_{g-2k-2},\ast,\ldots,\ast).
$$
This finishes the proof.
\end{proof}

\begin{prop}
\label{parahoric stabilizers a-number}
For every $C\in\Irr(X_\mu(\tau)')$, its stabilizer $H$ in $\J$ is parahoric.
\end{prop}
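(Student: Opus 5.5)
We plan to show that every irreducible component $C$ of $X_\mu(\tau)'$ is, up to $\J$-translation, the closure $\overline{X_\mu^{\lambda^{(k)}}(\tau)}$ for some $1\le k\le\lfloor g/2\rfloor$. Theorem \ref{parahoric J-strata} then gives that its stabilizer is $\J\cap P_{\Pi(\lambda^{(k)})}$, and conjugating by the translating element gives a parahoric subgroup. The argument is a dimension count based on the $\J$-stratification.

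First, recall that $X_\mu(\tau)'$ is a union of $\J$-strata. By Remark \ref{dense Coxeter stratum} it is equidimensional of dimension $\dim X_\mu(\tau)-1$. There are only finitely many $\J$-orbits of $\J$-strata, since $\cSm$ modulo $\J$ is finite; this follows from the finiteness of the relevant $\Omega\backslash\cAm$ data in a bounded region, or one can argue directly that $X_\mu(\tau)$ is covered by finitely many $\J$-translates of a quasi-compact set. Hence every irreducible component $C$ contains a $\J$-stratum $S$ that is open dense in $C$ and has $\dim S=\dim X_\mu(\tau)-1$. Each stratum $S$ lies in some $\Xl$, and $\Xl$ is a finite union of strata. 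Therefore $\dim\Xl\ge\dim X_\mu(\tau)-1$. Since $\Xl\cap\pi(X_{\vp^\mu v}(\tau))$ is either empty or open in $\Xl$ and of full dimension, we conclude that $\dim\Xl$ is either maximal or codimension one.

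The main obstacle is to show that $\lambda$ can be chosen small, or at least that the stratum $S$ is parahoric. I would try the following. If $\lambda$ is not small, then by \cite{Shimada6} $\Xl$ decomposes further, and its strata are obtained from strata for small cocharacters by a reduction in which the dimension drops strictly. Concretely, the plan is to use the dimension bound $\dim X_\mu^\lambda(\tau)\le\langle\rho,\mu+\lambda^\flat\rangle$, with equality exactly in the small case. Then Theorem \ref{flat bijection}, together with the enumeration in Lemma \ref{lemmma:codimone}, shows that the only codimension-one strata are those in the $\J$-orbits of the parahoric strata inside $X_\mu^{\lambda^{(k)}}(\tau)$. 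If such a bound is not directly available, the fallback is to use the identity $\J\backslash\cSm^{\mathrm{par}}\cong\Omega\backslash\cAm^{\mathrm{sm}}$ together with the fact, from \cite{CV18}/\cite{Shimada6}, that every stratum lies in the closure of a parahoric stratum of strictly larger dimension unless it is itself parahoric.

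Granting this, $S$ is a $\J$-translate of the open standard parahoric stratum of $X_\mu^{\lambda^{(k)}}(\tau)$. By Theorem \ref{parahoric J-strata}, $X_\mu^{\lambda^{(k)}}(\tau)$ is irreducible, so $C=j\overline{X_\mu^{\lambda^{(k)}}(\tau)}$ for some $j\in\J$. Finally, the stabilizer of $C$ equals the stabilizer of $\overline{\Xl}$ conjugated by $j$, namely $j(\J\cap P_{\Pi(\lambda^{(k)})})j^{-1}$, which is parahoric.
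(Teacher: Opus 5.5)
There is a genuine gap, and it is exactly the step you call the main obstacle. Your route identifies each $C\in\Irr(X_\mu(\tau)')$ with a translate $j\overline{X_\mu^{\lambda^{(k)}}(\tau)}$ and then reads off the stabilizer from Theorem \ref{parahoric J-strata}. The inputs for that identification are the bijection $\J\backslash\cSm^{\mathrm{par}}\cong\Omega\backslash\cAm^{\mathrm{sm}}$, Theorem \ref{flat bijection} and Lemma \ref{lemmma:codimone}. All of these concern only \emph{parahoric} strata and \emph{small} cocharacters. So you must already know that the $\J$-stratum $S$ open dense in $C$ is parahoric, which is essentially the proposition itself. In the paper, Corollary \ref{irreducible components a-number} is deduced \emph{from} this proposition, so your order of argument is circular.

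Neither of your workarounds closes this. First, the inequality $\dim\Xl\le\langle\rho,\mu+\lambda^\flat\rangle$ with equality exactly for small $\lambda$ is not available; only the formula $\dim\Xl=\langle\rho,\mu+\lambda^\flat\rangle$ from \cite{Shimada6} is recorded. Second, a dimension count on $\Xl$ cannot detect parahoricity anyway. Theorem \ref{parahoric J-strata} only places a \emph{standard} parahoric representative of each $\J$-orbit in some $X_\mu^{\lambda}(\tau)$ with $\lambda$ small, while other $\J$-translates of that stratum lie in $\Xl$ for non-small $\lambda$. Nothing a priori excludes a non-parahoric stratum of codimension one. Third, your fallback (every non-parahoric stratum lies in the closure of a strictly larger-dimensional parahoric one) is not a result you can cite, and it is again equivalent to what is to be proved. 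Your claim that $\Xl\cap\pi(X_{\vp^\mu v}(\tau))$ is open of full dimension in $\Xl$ whenever non-empty is also unjustified, though inessential.

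The missing idea is to get parahoricity at Iwahori level rather than from the $\J$-stratification. Since $X_\mu(\tau)'$ is a union of Ekedahl--Oort strata $\pi(X_w(\tau))$ with $w\in\SAdm(\mu)$, you can write $C=\overline{\pi(Z)}$ for some such $w$ and some $Z\in\Irr(X_w(\tau))$. By \cite[Proposition 3.1.4]{ZZ20}, the stabilizer $P$ of $Z$ in $\J$ is parahoric, and clearly $P\subseteq H$. Moreover, $H$ is bounded (cf.\ \cite[Lemma 4.2.2]{ZZ20}) and contained in $\ker(\kappa|_{\J})$, since it preserves the connected component of the affine Grassmannian containing $C$. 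A bounded subgroup of $\ker(\kappa|_{\J})$ containing a parahoric subgroup is parahoric. This proves the proposition with no classification of strata. The description of the components as translates of $\overline{X_\mu^{\lambda^{(k)}}(\tau)}$ is then a consequence, not an input.
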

\begin{proof}
Since $X_\mu(\tau)'$ is a union of Ekedahl--Oort strata, there are $w\in\SAdm(\mu)$ and $Z\in\Irr \bigl(X_w(\tau)\bigr)$ such that $C=\overline{\pi(Z)}$. By \cite[Proposition 3.1.4]{ZZ20}, the stabilizer $P$ of $Z$ in $\J$ is parahoric. Moreover, the stabilizer $H$ of $C$ in $\J$ is bounded (cf.\ \cite[Lemma 4.2.2]{ZZ20}), and $H$ is contained in ${\mathbb J^0} := \ker (\kappa|_{\mathbb J})$ since $H$ stabilizes the connected component of the affine Grassmannian containing $C$. Since $P \subset H$, it is parahoric.
\end{proof}
\begin{coro}
\label{irreducible components a-number}
Let $\lambda^{(k)}$ be as above.
If $g$ is even, then
$$
\Pi(\lambda^{(k)})=
\begin{cases}
\Pi\setminus\{\alpha_{\frac{g}{2}}\}&\text{if $1\leq k<\frac{g}{2}$},\\
\Pi\setminus\{\alpha_0,\alpha_g\}&\text{if $k=\frac{g}{2}$}.
\end{cases}
$$
If $g$ is odd, then
$$
\Pi(\lambda^{(k)})=
\begin{cases}
\Pi\setminus\{\alpha_0,\alpha_g\}&\text{if $1\leq k<\frac{g-1}{2}$},\\
\Pi\setminus\{\alpha_{\frac{g-1}{2}},\alpha_{\frac{g+1}{2}}\}&\text{if $k=\frac{g-1}{2}$}.
\end{cases}
$$
In either case, we have
$$\Irr\bigl(X_\mu(\tau)'\bigr)=\bigl\{j\overline{X_\mu^{\lambda^{(k)}}(\tau)}\mid 1\leq k\leq\left\lfloor\tfrac{g}{2}\right\rfloor,\ j\in\J/(\J\cap P_{\Pi(\lambda^{(k)})})\bigr\}.$$
\end{coro}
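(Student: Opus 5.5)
The proof splits into two parts: computing $\Pi(\lambda^{(k)})$ directly, and then identifying the irreducible components.

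\textbf{Computing $\Pi(\lambda^{(k)})$.} By definition, $\alpha_i\in\Pi(\lambda)$ if and only if $\lambda_\beta\geq0$ for every $\beta$ in the $p(\tau)$-orbit of $\alpha_i$. Since $\tau s_i\tau^{-1}=s_{g-i}$, this orbit is $\{\alpha_i,\alpha_{g-i}\}$, so $\alpha_i\in\Pi(\lambda)$ if and only if $\min\{\lambda_i,\lambda_{g-i}\}\geq0$. We read this off from the defining values.

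For pairs $(\lambda_i,\lambda_{g-i})\in\{(1,0),(0,1),(0,0)\}$ the minimum is $0$, so $\alpha_i,\alpha_{g-i}\in\Pi(\lambda)$. The only exclusions therefore come from the special indices.
\begin{itemize}
\item If $g=2m$ and $k<m$: $\lambda_0=0$ and $\lambda_{2m}=1$, so $\alpha_0,\alpha_g$ are kept. Since $\lambda_m=1-m<0$, $\alpha_m$ is removed.
\item If $g=2m$ and $k=m$: $\lambda_m=0$ keeps $\alpha_m$, while $\lambda_0=-m<0$ removes $\alpha_0,\alpha_g$.
\item If $g=2m+1$ and $k<m$: $\lambda_0=-m<0$ removes $\alpha_0,\alpha_g$, and all other pairs are nonnegative.
\item If $g=2m+1$ and $k=m$: $\lambda_0=0$ and $\lambda_g=1$ keep $\alpha_0,\alpha_g$. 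Since $\lambda_{m+1}=-\lceil m/2\rceil<0$, the pair $\alpha_m,\alpha_{m+1}$ is removed. This uses $m\geq2$, which holds because $g\geq4$.
\end{itemize}
Note that Theorem \ref{parahoric J-strata} requires $\lambda^{(k)}$ to be small, and this was established in Lemma \ref{lemmma:codimone}.

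\textbf{Identifying the irreducible components.} By Remark \ref{dense Coxeter stratum}, $X_\mu(\tau)'$ is equidimensional of codimension $1$. It is also a union of $\J$-strata, hence a finite union, up to $\J$-translation, of sets $X_\mu^\lambda(\tau)$ and their strata. Fix $C\in\Irr(X_\mu(\tau)')$.

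By Proposition \ref{parahoric stabilizers a-number}, the stabilizer of $C$ is a parahoric subgroup. Since there are only finitely many $\J$-strata up to $\J$ in a quasi-compact region, $C$ contains a dense open subset of a single $\J$-stratum $S$ of dimension $\dim X_\mu(\tau)-1$. We want $S$ to be parahoric.

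For this, compare stabilizers. The stabilizer of $S$ stabilizes $\overline S=C$. Conversely, the stabilizer of $C$ permutes the finitely many top-dimensional strata in $C$, which is only the single stratum $S$, so it stabilizes $S$. Hence $S$ has the same parahoric stabilizer as $C$, so $S\in\cSm^{\mathrm{par}}$.

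By Theorem \ref{parahoric J-strata}, after translating by $\J$ we may take $S$ to be the standard parahoric stratum, open in $X_\mu^\lambda(\tau)$, for some small $\lambda$. Using \eqref{eqn:dimension} together with Theorem \ref{flat bijection}, the codimension-one condition forces $\lambda^\flat=s_{2k}\widetilde\nu_\tau$. Lemma \ref{lemmma:codimone} then gives $\lambda\in\Omega\lambda^{(k)}$. Since $\Omega$ acts through $\J$-translates, we get $C=j\overline{X_\mu^{\lambda^{(k)}}(\tau)}$.

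Conversely, each $\overline{X_\mu^{\lambda^{(k)}}(\tau)}$ is irreducible by Theorem \ref{parahoric J-strata}. It has codimension $1$, and it lies in $X_\mu(\tau)'$, as noted before the lemma. It is therefore a component. Its stabilizer is $\J\cap P_{\Pi(\lambda^{(k)})}$, again by Theorem \ref{parahoric J-strata}, which yields the stated parametrization.

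\textbf{Main obstacle.} The delicate step is showing that the dense stratum of $C$ is a parahoric stratum, i.e.\ that the stabilizers of $S$ and $C$ coincide. I would handle it by the finiteness and uniqueness of the top-dimensional stratum inside $C$, as above.
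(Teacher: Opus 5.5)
Your computation of $\Pi(\lambda^{(k)})$ is correct and matches the paper. Your plan for the components is also the paper's plan: take the $\J$-stratum $S$ with $S\cap C$ open dense in $C$, show that $S$ is parahoric, then apply Theorem \ref{parahoric J-strata} and Lemma \ref{lemmma:codimone}.

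\textbf{The gap.} It is exactly at the step you flag. You write that the stabilizer of $S$ stabilizes $\overline S=C$. At that point you only know that $S\cap C$ is dense in $C$, which gives $C\subseteq\overline S$. Equality requires $S\subseteq C$, essentially that $S$ is irreducible, and a $\J$-stratum is not known to be irreducible a priori. Irreducibility of $S$ is an \emph{output} of Theorem \ref{parahoric J-strata}: a parahoric stratum is a $\J$-translate of an open subset of an irreducible $X_\mu^\lambda(\tau)$ with $\lambda$ small. You may only invoke that theorem once you know $S$ is parahoric, so your argument is circular. If $S$ had top-dimensional pieces in several components of $X_\mu(\tau)'$, its stabilizer could permute them and be strictly larger than $\operatorname{Stab}_{\J}(C)$. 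Your uniqueness argument proves only $\operatorname{Stab}_{\J}(C)\subseteq\operatorname{Stab}_{\J}(S)$.

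\textbf{How the paper closes it.} It deduces that $\operatorname{Stab}_{\J}(S)$ is parahoric from that inclusion alone, using three facts:
\begin{itemize}
\item $\operatorname{Stab}_{\J}(S)$ is bounded: writing $S=S_f\subseteq I\vp^{f(1)}K/K$ and fixing $xK\in S$, any $j$ stabilizing $S$ satisfies $jx\in I\vp^{f(1)}K$.
\item It lies in $\ker(\kappa|_{\J})$, because $S$ sits in one connected component of the affine Grassmannian.
\item It contains the parahoric $\operatorname{Stab}_{\J}(C)$ from Proposition \ref{parahoric stabilizers a-number}.
\end{itemize}
The argument of that proposition then shows it is parahoric. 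Only after this does Theorem \ref{parahoric J-strata} make $S$ irreducible. Then $C\subseteq\overline S$, with $C$ an irreducible component, forces $C=\overline S$.

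With this substitution, the rest of your proof goes through: the dimension formula, Theorem \ref{flat bijection}, Lemma \ref{lemmma:codimone}, and the converse inclusion.

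\textbf{A smaller point.} When choosing $S$, what you need is that only finitely many $\J$-strata meet the quasi-compact set $C$ (\cite[Theorem 2.10]{Gortz19}). Finiteness of strata up to $\J$ is not the relevant statement.
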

\begin{proof}
Since $p(\tau)$ exchanges $\alpha_i$ and $\alpha_{g-i}$, the descriptions of $\Pi(\lambda^{(k)})$ follow directly from the values of $\lambda_i^{(k)}$ given above.
For each $k$, $\overline{X_\mu^{\lambda^{(k)}}(\tau)}$ is an irreducible component of $X_\mu(\tau)'$.
Let $C\in\Irr(X_\mu(\tau)')$. By \cite[Theorem 2.10]{Gortz19}, only finitely many $\J$-strata meet $C$. We may therefore choose a $\J$-stratum $S$ such that $S\cap C$ is open and dense in $C$.
If $j$ belongs to the stabilizer of $C$ in $\J$, then $jS\cap C$ is again open and dense in $C$. This implies that $jS=S$. Thus the stabilizer of $S$ in $\J$ contains that of $C$. It is bounded and contains a parahoric subgroup by Proposition \ref{parahoric stabilizers a-number}, and hence is parahoric by the same argument as in Proposition \ref{parahoric stabilizers a-number}. By Theorem \ref{parahoric J-strata} and Lemma \ref{lemmma:codimone}, $S$ is irreducible and corresponds to a small cocharacter $\lambda^{(k)}$. Thus $C=\overline S=j\overline{X_\mu^{\lambda^{(k)}}(\tau)}$ for some $1\leq k\leq\left\lfloor\frac{g}{2}\right\rfloor$ and $j\in\J$.
\end{proof}

More generally, for every $1\leq r\leq g$, Harashita \cite{Harashita04} proved that the locus in $\mathscr S_g$ with $a$-number at least $r$ is equidimensional and gave a formula for the number of its irreducible components.

\subsection{The main theorem for $X_\mu(\tau)'$}
For $J\subsetneq\tS$ and $\lambda\in X_*(T)$, let $\overline Q_\lambda$ be the image of $P_J\cap p^\lambda Kp^{-\lambda}$ under the reduction map $P_J\longrightarrow\overline P_J$. It is a proper parabolic subgroup of $\overline P_J$ if and only if $\lambda_\alpha\neq0$ for some $s_{\ta}\in J$. 
The reduction map $P_J\rightarrow\overline P_J$ induces a projection
$$\rho_\lambda\colon P_Jp^\lambda K/K\cong P_J/(P_J\cap p^\lambda Kp^{-\lambda})\longrightarrow\overline P_J/\overline Q_\lambda,\qquad hp^\lambda K\longmapsto\overline{h}\,\overline Q_\lambda.$$
The fibers are isomorphic to affine spaces.
If $P_J=K$, this is the map in \cite[(1.4.4)]{Zhu17}.
Let $\overline Q_\lambda^1$ be the image of $P_J\cap p^\lambda Kp^{-\lambda}\cap\Sp_{2g}(\bQp)$ under the reduction map $P_J\cap\Sp_{2g}(\bQp)\longrightarrow\overline P_J^1$.
Then $\overline Q_\lambda^1=\overline P_J^1\cap\overline Q_\lambda$ and the natural inclusion induces an isomorphism
$\overline P_J^1/\overline Q_\lambda^1\cong\overline P_J/\overline Q_\lambda.$

The goal of this subsection is to prove the following theorem.
\begin{theo}
\label{main theorem a-number}
Assume that $g\geq4$ and $p\geq5$.
There exists an open dense subscheme $U\subset X_\mu(\tau)'$ such that, for every closed point $xK\in U$, if $j\in\J^1$ has finite order and $jxK=xK$, then $j=\pm1$.
\end{theo}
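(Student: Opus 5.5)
By Corollary \ref{irreducible components a-number}, the irreducible components of $X_\mu(\tau)'$ are the $\J$-translates of $C_k\coloneqq\overline{X_\mu^{\lambda^{(k)}}(\tau)}$ for $1\leq k\leq\lfloor\frac g2\rfloor$. The stabilizer of $C_k$ in $\J$ is $\J\cap P_J$ with $J=\Pi(\lambda^{(k)})$, and $\J$ normalizes $\J^1$. Hence it suffices to treat each $k$ separately. For each $k$ I will find a non-empty open $U_k\subseteq C_k$, avoiding the other components, such that finite-order elements of $\J^1$ fixing a point of $U_k$ are $\pm1$. As in the proof of Theorem \ref{main theo GL}, fixed loci are closed, and by Lemma \ref{finite quotient} the action of $\J^1\cap P_J$ on $C_k$ factors through a finite group. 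It therefore suffices to show: if $j\in\J^1\cap P_J$ has finite order and fixes $X_\mu^{\lambda^{(k)}}(\tau)$ pointwise, then $j=\pm1$. One point needs care here: an element of $\J^1$ fixing a point of the open stratum $S$ stabilizes $S$, so it lies in $\J^1\cap P_J$ by Theorem \ref{parahoric J-strata}.

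The next step is the equivariant projection. Every point of $X_\mu^{\lambda}(\tau)\subseteq I p^\lambda K/K\subseteq P_Jp^\lambda K/K$ has an image under $\rho_\lambda$ in $\overline P_J/\overline Q_\lambda\cong\overline P_J^1/\overline Q_\lambda^1$. The map $\rho_\lambda$ is equivariant for $\J^1\cap P_J$ acting through $(\overline P^1_J)^{\Ad(\tau)\circ\sigma}$. I expect the image of the open $\J$-stratum $S$ to be a fine Deligne--Lusztig variety in $\overline P_J/\overline Q_\lambda$, of the form $\pi(Y(w'))$ for a suitable $w'$ with $\supp_\sigma$ equal to $J$. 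This should follow by unwinding the defining condition $\inv_K(j,x)=f(j)$ of $S$ together with $x^{-1}\tau\sigma(x)\in Kp^\mu K$.

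Granting this, Proposition \ref{center} says $\overline j$ lies in the center of $\overline P_J^1$. The reductive quotient is given by Lemma \ref{lemm:GSp-red-quot}, and there are three cases.

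First, suppose $g$ is even and $k<g/2$, so $J=\tS\setminus\{s_{g/2}\}$. Then $\overline P^1_J\cong\operatorname{Res}\Sp_g$, so $\overline j=\pm1$. Torsion-freeness of $\J^1\cap P_J^+$ (Lemma \ref{lemm:torsion-free}, $p\ge5$) gives $j=\pm1$.

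Second, suppose $J=\tS\setminus\{s_0,s_g\}$, which occurs for $g$ even with $k=g/2$ and for $g$ odd with $k<\frac{g-1}{2}$. Then $\overline P^1_J\cong \mathrm U_g$ and $\overline j=c$ with $c^{p+1}=1$. As before, write $j=zu^+$. To force $c=\pm1$, I will use the affine fibers of $\rho_\lambda$. I expect that $X_\mu^\lambda(\tau)$ contains, over a point of the image, a line $\{y\,u(x)\,p^\lambda K\}$, with $u(x)$ a root subgroup for $\alpha_0$ or $\alpha_g$ of the type used in Lemma \ref{rank one action}. Since $\lambda_0$ or $\lambda_g$ is nonzero, this root direction is not absorbed into $p^\lambda Kp^{-\lambda}$. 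A computation like the one in Lemma \ref{rank one action} should show that $j$ acts on the line by $x\mapsto c^{\pm2}x+a$. Pointwise fixing then forces $c^2=1$, and torsion-freeness finishes.

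Third, suppose $g$ is odd and $k=\frac{g-1}{2}$, so $J=\tS\setminus\{s_{\frac{g-1}{2}},s_{\frac{g+1}{2}}\}$. Then the center is $\{\pm1\}\times\mathrm U_1(\F_p)$. The $\mathrm U_1$ factor has to be killed by the same fiber argument, now in the root direction $\alpha_{\frac{g-1}{2}}$ or $\alpha_{\frac{g+1}{2}}$, on which the $\mathrm U_1$ factor acts through a character like $c\mapsto c^{\pm2}$. This is plausible because here $\lambda_m=-\lfloor m/2\rfloor$ and $\lambda_{m+1}=-\lceil m/2\rceil$, which are nonzero for $g\geq5$.

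I expect the main obstacle to be twofold. One part is establishing that the image of the parahoric $\J$-stratum under $\rho_\lambda$ is, up to closure, a fine Deligne--Lusztig variety to which Proposition \ref{center} applies. The other is exhibiting explicit affine lines in the fibers on which the residual central torus acts nontrivially. For the first part, I will first try to reduce via Theorem \ref{parahoric J-strata} and the explicit $\lambda^{(k)}$: compute $\tau\sigma$ on the image and check that the Frobenius condition defines the expected relative-position locus in $\overline P_J/\overline Q_\lambda$. Only the closure containing $\overline P_J^{\Ad(\tau)\circ\sigma}$-rational points is really needed, since the proof of Proposition \ref{center} uses only that.
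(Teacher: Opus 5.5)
Your overall architecture matches the paper's. You reduce, via Corollary \ref{irreducible components a-number} and Lemma \ref{finite quotient}, to a finite-order $j\in\J^1\cap P_J$ fixing $\overline{X_\mu^\lambda(\tau)}$ pointwise. You then make $\overline j$ central via $\rho_\lambda$ and the argument of Proposition \ref{center}, kill the residual central torus on explicit points, and finish with torsion-freeness.

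The first ``obstacle'' you identify is not needed. Because $\lambda\in\cAm$, the point $p^\lambda K$ lies in $X_\mu^\lambda(\tau)$. Because $\J\cap P_J$ stabilizes $\overline{X_\mu^\lambda(\tau)}$, the whole orbit $(\J^1\cap P_J)p^\lambda K$ lies in it and is fixed by $j$. The map $\rho_\lambda$ sends this orbit onto $(\overline P_J^1)^{\Ad(\tau)\circ\sigma}/\bigl((\overline P_J^1)^{\Ad(\tau)\circ\sigma}\cap\overline Q_\lambda^1\bigr)$. That is exactly what the second half of the proof of Proposition \ref{center} uses, given that $\overline Q_\lambda^1$ is proper and $\tau$ is transitive on the components of $J$. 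So you never need to identify $\rho_\lambda(S)$ with a fine Deligne--Lusztig variety.

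Your first two cases then work essentially as in the paper. The paper takes the line $u(x)p^\lambda K$ with $u(x)=1_{2g}+p^{\frac g2-2}[x]E_{2g,1}$ (resp.\ $p^{\frac{g-5}{2}}$ for $g$ odd), checks directly that it lies in $X_\mu^\lambda(\tau)$, and obtains $x=c^{1-p}x=c^{2}x$.

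The genuine gap is the case $g=2m+1$, $k=m$, $J=\tS\setminus\{s_m,s_{m+1}\}$, and it has two parts.

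First, a character of the form $c\mapsto c^{\pm2}$ cannot finish the argument. Here $\overline j=(\epsilon,c)$, and you need $c=\epsilon$, not merely $c^2=1$. For instance, $z=\diag(1,\ldots,1,-1,1,\ldots,1,-1,1,\ldots,1)$ with $-1$ in positions $m+1$ and $3m+2$ lies in $\J^1\cap P_J$, has central image $(1,-1)$, and is invisible to any $c^{\pm2}$-character.

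The paper instead uses the direction $E_{m+1,g}-E_{g+1,3m+2}$, on which $z^{-1}(\,\cdot\,)z$ acts by $\epsilon c^p$. However, this direction alone leaves $X_\mu(\tau)$: the Frobenius computation produces the factor $1_{2g}+p^{-1}([y]+[y^p]+[x^{p+1}])(E_{1,g}-E_{g+1,2g})$. So one must work on the curve $u(x,y)p^\lambda K$ with $y^p+y+x^{p+1}=0$, not on a line. The unknown entries of $p^{-\lambda}u^+p^\lambda$ contribute terms in $y$, $x^2$ and $xy$. They are eliminated only by the linear independence of $x,y,x^2,xy$ modulo $y^p+y+x^{p+1}$, which then yields $\epsilon c^p=1$.

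Second, after obtaining $j=\pm u^+$ you cannot simply invoke torsion-freeness. Here $J$ is not very special, and neither is its $\GL_{2g}$-counterpart, which omits the reflections with indices $m,m+1,3m+1,3m+2$. So $\J^1\cap P_J^+$ is not a priori contained in a conjugate of $V_{p,1}$, and pro-unipotent radicals of non-maximal parahorics need not be torsion-free.

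The paper proves separately that $\omega^{-m}u^+\omega^m\in V_{p,1}$, i.e.\ that the column $\mathbf u$ of its reduction vanishes. It does this by redoing the $(m+1,g)$-entry computation for every conjugate $r^{-1}u^+r$, where $r$ is a product of the elements $s_is_{g-i}$ with $m+2\le i\le g$. Neither of these ingredients appears in your proposal.
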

\begin{proof}
Since $X_\mu(\tau)'$ is locally perfectly of finite type, its irreducible components form a locally finite family.
Therefore, by Corollary \ref{irreducible components a-number}, it suffices to find such an open dense subscheme in each irreducible component $\overline{X_\mu^{\lambda^{(k)}}(\tau)}$ for $1\leq k\leq\left\lfloor\frac{g}{2}\right\rfloor$.
Set $\lambda=\lambda^{(k)}$ and $J=\Pi(\lambda^{(k)})$.

The stabilizer of $\overline{X_\mu^\lambda(\tau)}$ in $\J^1$ is $\J^1\cap P_J$. By Lemma \ref{finite quotient}, it suffices to show that if $j\in\J^1\cap P_J$ has finite order and fixes $\overline{X_\mu^\lambda(\tau)}$ pointwise, then $j=\pm1$.
Let $j$ be such an element.
Let $\overline j\in (\overline P_J^1)^{\Ad(\tau)\circ\sigma}$ be the image of $j$.
The projection $\rho_\lambda$ maps $(\J^1\cap P_J)p^\lambda K$ onto $(\overline P_J^1)^{\Ad(\tau)\circ\sigma}/((\overline P_J^1)^{\Ad(\tau)\circ\sigma}\cap\overline Q_\lambda^1)$.
Since $(\J^1\cap P_J)p^\lambda K\subseteq\overline{X_\mu^\lambda(\tau)}$, the element $\overline j$ fixes this quotient pointwise. Note that $\overline Q_\lambda^1$ is a proper parabolic subgroup of $\overline P_J^1$ and that $\tau$ acts transitively on the set of connected components of $J$. The proof of Proposition \ref{center} therefore shows that $\overline j$ lies in the center of $(\overline P_J^1)^{\Ad(\tau)\circ\sigma}$.

Assume that $g$ is even and $k<\frac{g}{2}$. Then $J=\tS\setminus\{s_{\frac{g}{2}}\}$ and $(\overline P_J^1)^{\Ad(\tau)\circ\sigma}\cong\Sp_g(\mathbb F_{p^2})$, so $\overline j=\pm1$ (cf.\ Lemma \ref{reductive quotient symplectic}). Hence $j=\pm u^+$ for some $u^+\in\J^1\cap P_J^+$. Since $p\geq5$, by the argument after Lemma \ref{lemm:GSp-red-quot}, the group $\J^1\cap P_J^+$ is torsion-free. As $u^+$ has finite order, we have $u^+=1$ and $j=\pm1$.

Assume that $g$ is even and $k=\frac{g}{2}$. Then $J=\tS\setminus\{s_0,s_g\}$ and $(\overline P_J^1)^{\Ad(\tau)\circ\sigma}\cong\mathrm U_g(\mathbb F_p)$. Hence $\overline j=c$ for some $c\in\mathbb F_{p^2}^\times$ satisfying $c^{p+1}=1$. Set $z=\diag([c],\ldots,[c],[c^p],\ldots,[c^p])$, where each entry occurs $g$ times. Then $j=zu^+$ for some $u^+\in\J^1\cap P_J^+$. For $x\in\overline{\mathbb F}_p$, set $u(x)=1_{2g}+p^{\frac{g}{2}-2}[x]E_{2g,1}$.
Then
$$(u(x)p^\lambda)^{-1}\tau\sigma(u(x)p^\lambda)=(1_{2g}+[x^p]E_{g,g+1})p^{\lambda^\natural}p(\tau)(1_{2g}-[x]E_{g,g+1})\in Kp^\mu K.$$
Thus $u(x)p^\lambda K\in X_\mu^\lambda(\tau)$ for every $x\in\overline{\mathbb F}_p$.
Since $j$ fixes $p^\lambda K$ and $zp^\lambda K=p^\lambda K$, we have $p^{-\lambda}u^+p^\lambda\in K$.
Since $j$ fixes $u(x)p^\lambda K$, we have $u^+u(x)p^\lambda K=u(c^{1-p}x)p^\lambda K$, and hence
$$p^{-\lambda}u(-c^{1-p}x)u^+u(x)p^\lambda\in K.$$
Since $u^+\in P_J^+$, the $(2g,1)$-entry of this matrix satisfies
$$\left(p^{-\lambda}u(-c^{1-p}x)u^+u(x)p^\lambda\right)_{2g,1}\in p^{-1}\bigl([x]-[c^{1-p}x]\bigr)+\breve{\Z}_p.$$
Thus $[x]-[c^{1-p}x]\in p\breve{\Z}_p$, so $x=c^{1-p}x$. Evaluating at $x=1$, we obtain $c^{p-1}=1$. Together with $c^{p+1}=1$, this gives $c=\pm1$. Thus $j=\pm u^+$, so $u^+$ has finite order. 
Although $J$ is not very special in this case, the corresponding parahoric type for $\operatorname{GL}_{2g}$ is very special. Hence, by the same argument after Lemma \ref{lemm:GSp-red-quot}, the group $\J^1\cap P_J^+$ is torsion-free.
Hence $u^+=1$ and $j=\pm1$.

Assume that $g$ is odd and $k<\frac{g-1}{2}$. Then $J=\tS\setminus\{s_0,s_g\}$ and $(\overline P_J^1)^{\Ad(\tau)\circ\sigma}\cong\mathrm U_g(\mathbb F_p)$. For $x\in\overline{\mathbb F}_p$, set $u(x)=1_{2g}+p^{\frac{g-5}{2}}[x]E_{2g,1}$. The same calculation as above shows that $u(x)p^\lambda K\in X_\mu^\lambda(\tau)$ for every $x\in\overline{\mathbb F}_p$. The remaining parts of the argument are also the same as above.

Assume that $g=2m+1$ and $k=m$. Then $J=\tS\setminus\{s_m,s_{m+1}\}$ and $(\overline P_J^1)^{\Ad(\tau)\circ\sigma}\cong\Sp_{2m}(\mathbb F_{p^2})\times\mathrm U_1(\mathbb F_p)$. Hence $\overline j=(\epsilon,c)$ for some $\epsilon\in\{\pm1\}$ and $c\in\mathbb F_{p^2}^\times$ satisfying $c^{p+1}=1$. Set
$$z=\diag(\underbrace{\epsilon,\ldots,\epsilon}_{m},[c],\underbrace{\epsilon,\ldots,\epsilon}_{2m},[c^p],\underbrace{\epsilon,\ldots,\epsilon}_{m}).$$
Then $j=zu^+$ for some $u^+\in\J^1\cap P_J^+$. For $x,y\in\overline{\mathbb F}_p$ satisfying $y^p+y+x^{p+1}=0$, set $$u(x,y)=1_{2g}+[x](E_{m+1,g}-E_{g+1,3m+2})+[y](E_{1,g}-E_{g+1,2g}).$$ A direct calculation gives
\begin{align*}
&(u(x,y)p^\lambda)^{-1}\tau\sigma(u(x,y)p^\lambda)\\
&=\bigl(1_{2g}+[x^p](E_{3m+2,2g}-E_{1,m+1})\bigr)p^{\lambda^\natural}p(\tau)
\bigl(1_{2g}-[x](E_{3m+2,2g}-E_{1,m+1})\bigr)\\
&\quad\cdot\bigl(1_{2g}+p^{-1}([y]+[y^p]+[x^{p+1}])(E_{1,g}-E_{g+1,2g})\bigr)\in Kp^\mu K.
\end{align*}
Thus $u(x,y)p^\lambda K\in X_\mu^\lambda(\tau)$. Moreover, $z^{-1}u(x,y)z=u(\epsilon c^px,y)$. Since $j$ fixes $u(x,y)p^\lambda K$, we have $u^+u(x,y)p^\lambda K=u(\epsilon c^px,y)p^\lambda K$, and hence
$$p^{-\lambda}u(\epsilon c^px,y)^{-1}u^+u(x,y)p^\lambda\in K.$$
Set $h=p^{-\lambda}u^+p^\lambda\in K$. Since $u^+\in P_J^+\subset I$, we have $h_{m+1,m+1},h_{g,g}\in1+p\breve{\Z}_p$, $h_{m+1,1}\in\breve{\Z}_p$, and $h_{g,m+1},h_{g,1}\in p\breve{\Z}_p$. The $(m+1,g)$-entry of the above matrix is
\begin{align*}
&h_{m+1,g}+p^{-1}[x]h_{m+1,m+1}+p^{-1}[y]h_{m+1,1}-p^{-1}[\epsilon c^px]h_{g,g}\\
&\qquad-p^{-2}[\epsilon c^px][x]h_{g,m+1}-p^{-2}[\epsilon c^px][y]h_{g,1}.
\end{align*}
Multiplying this entry by $p$ and reducing modulo $p$, we obtain
$$
(1-\epsilon c^p)x+\overline h_{m+1,1}y-\epsilon c^p\,\overline{p^{-1}h_{g,m+1}}x^2-\epsilon c^p\,\overline{p^{-1}h_{g,1}}xy=0
$$
for every $(x,y)$ satisfying $y^p+y+x^{p+1}=0$, where the bars denote reduction modulo $p$. The elements $x,y,x^2,xy$ are linearly independent in $\overline{\mathbb F}_p[x,y]/(y^p+y+x^{p+1})$. Hence $\epsilon c^p=1$, and therefore $c=\epsilon$. Thus $j=\pm u^+$.
If $u^+$ lies in a torsion-free subgroup, then we have $u^+=1$ and $j=\pm 1$, since $u^+$ has finite order.

Thus, it remains to show that $u^+$ lies in a torsion-free subgroup.
Let $\omega$ be as in the proof of Theorem \ref{main theo GL}. Since $V_{p,1}$ is torsion-free by $p\geq5$ and Lemma \ref{lemm:torsion-free}, it suffices to show that $\omega^{-m}u^+\omega^m\in V_{p,1}$.
Since $u^+\in\J^1\cap P_J^+$, we have
$$
\overline{\omega^{-m}u^+\omega^m}=
\begin{pmatrix}
1&0&0&0\\
\mathbf u&1_{g-1}&0&0\\
*&*&1&0\\
*&*&\sigma(\mathbf u)&1_{g-1}
\end{pmatrix}\in\GL_{2g}(\overline{\mathbb F}_p),
\qquad
\mathbf u=\begin{pmatrix}
\overline{u^+_{m+2,m+1}}\\
\vdots\\
\overline{u^+_{3m+1,m+1}}
\end{pmatrix}.
$$
Thus, it suffices to show that $\mathbf u=0$.
For $m+2\leq i\leq g$, we have $s_is_{g-i}\in\J^1\cap P_J$. Let $r\in\J^1\cap P_J$ be any product of these elements. Since $u^+$ fixes $\overline{X_\mu^\lambda(\tau)}$ pointwise and $P_J^+$ is normal in $P_J$, the element $r^{-1}u^+r$ belongs to $\J^1\cap P_J^+$ and
$$
r^{-1}u^+r\,u(x,y)p^\lambda K=u(x,y)p^\lambda K.
$$
Set $h'=p^{-\lambda}r^{-1}u^+rp^\lambda$. Computing the $(m+1,g)$-entry as above, multiplying it by $p$, and reducing modulo $p$, we obtain
$$
\overline{h'_{m+1,1}}y-\overline{p^{-1}h'_{g,m+1}}x^2-\overline{p^{-1}h'_{g,1}}xy=0
$$
for every $(x,y)$ satisfying $y^p+y+x^{p+1}=0$. Hence
$$
\overline{(r^{-1}u^+r)_{g,m+1}}=\overline{p^{-1}h'_{g,m+1}}=0.
$$
Since $r$ is arbitrary, we obtain $\mathbf u=0$, as desired.
\end{proof}

For comparison, let $\lambda=(1^{(g-1)},0,1,0^{(g-1)})$. Then $\lambda$ is a small cocharacter corresponding to a one-dimensional parahoric $\J$-stratum, and $\Pi(\lambda)=\tS\setminus\{s_1,s_{g-1}\}$. The connected components of $\Pi(\lambda)$ are $\{s_0\}$, $\{s_2,\ldots,s_{g-2}\}$ and $\{s_g\}$. If $g\geq4$, $\tau$ does not act transitively on the set of connected components of $\Pi(\lambda)$. Thus, the argument in the proof of Proposition \ref{center} does not apply in this case.
In fact, by Lemma \ref{reductive quotient symplectic}, we have $\overline P_{\Pi(\lambda)}^1\cong\operatorname{Res}_{\mathbb F_{p^2}/\mathbb F_p}\Sp_2\times\mathrm U_{g-2}$. On the other hand, after base change to $\overline{\mathbb F}_p$, the parabolic subgroup $\overline Q_\lambda^1$ is isomorphic to $\Sp_2\times\GL_{g-2}\times B_{\Sp_2}$, where $B_{\Sp_2}$ is a Borel subgroup of $\Sp_2$. Consequently, $\overline P_{\Pi(\lambda)}^1/\overline Q_\lambda^1\cong\Sp_2/B_{\Sp_2}$, and $\mathrm U_{g-2}(\mathbb F_p)$ acts trivially on this quotient. Note that in this case, the projection $\rho_\lambda$ is an isomorphism.

\begin{rema}
\label{Oort conjecture from a-number loci}
Since $X_\mu(\tau)$ is equidimensional, every irreducible component of $X_\mu(\tau)'$ is contained in an irreducible component of $X_\mu(\tau)$. Since $\J$ acts transitively on the irreducible components of $X_\mu(\tau)$ by Remark \ref{dense Coxeter stratum}, every irreducible component of $X_\mu(\tau)$ contains an irreducible component of $X_\mu(\tau)'$. Hence, if $j\in\J^1$ has finite order and fixes an irreducible component of $X_\mu(\tau)$ pointwise, then $j=\pm1$ by Theorem \ref{main theorem a-number}. Therefore, for $g\geq4$ and $p\geq5$, Oort's conjecture follows from Lemma \ref{finite quotient}. Similarly, by the same argument with $\pi(X_w(\tau))$ in place of $X_\mu(\tau)'$, Theorem \ref{main theo} and Theorem \ref{maximal supersingular EO} imply Oort's conjecture for $g\geq3$ and even $g$, respectively, when $p\geq5$.
\end{rema}

\begin{rema}
Analogues of Theorem \ref{main theorem a-number} should also hold for $\GL_{2g}$ and $\operatorname{GSO}_{4m}$. Their proofs should be less involved, since the possible parahoric stabilizers of irreducible components are more restricted in these cases. However, a difficulty in the latter case is that the analogue of Theorem \ref{flat bijection} has not been established.
\end{rema}

\begin{coro}
\label{codimension automorphism locus}
Assume that $g\geq4$ and $p\geq5$. Let $U\subset X_\mu(\tau)$ be the locus whose closed points are those $xK$ such that $j=\pm1$ whenever $j\in\J^1$ has finite order and $jxK=xK$.
Then $U$ is open, and $X_\mu(\tau)\setminus U$ has codimension at least $2$ in $X_\mu(\tau)$.
\end{coro}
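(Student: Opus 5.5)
We prove openness first and then bound the complement stratum by stratum, using the decomposition $X_\mu(\tau)=\pi(X_{\vp^\mu v}(\tau))\sqcup X_\mu(\tau)'$ from Remark \ref{dense Coxeter stratum}.

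\emph{Openness.} Let $xK\in X_\mu(\tau)$. By Lemma \ref{finite quotient}, applied to a quasi-compact open neighbourhood $Z$ of $xK$, only finitely many elements of the finite stabilizer of $xK$ in $\J^1$ can matter. More precisely, the set of finite-order $j\in\J^1$ fixing some point of $Z$ is a finite union of conjugacy classes. Each such class acts on $Z$ through a finite quotient. Hence the complement of $U$ in $Z$ is a finite union of fixed loci $Z^j$, each closed since the affine Grassmannian is separated. It follows that $U\cap Z$ is open, and therefore $U$ is open.

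\emph{Complement on the $a=1$ locus.} It suffices to show that $(X_\mu(\tau)\setminus U)\cap\pi(X_{\vp^\mu v}(\tau))$ has codimension at least $2$. Locally this intersection is a finite union of the fixed loci $\pi(X_{\vp^\mu v}(\tau))^j$ with $j\in\J^1\setminus\{\pm1\}$ of finite order. By Remark \ref{dense Coxeter stratum}, $\vp^\mu v$ has positive Coxeter part and $\supp_\sigma(\vp^\mu v)=\tS$, since $g\ge 4\ge 3$. Therefore Proposition \ref{fixed locus positive Coxeter} (for even $g$) and Proposition \ref{fixed locus positive Coxeter odd} (for odd $g\ge5$) show that each such fixed locus has codimension at least $2$ in $\pi(X_{\vp^\mu v}(\tau))$. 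Since $\pi(X_{\vp^\mu v}(\tau))$ is open dense and equidimensional of the same dimension as $X_\mu(\tau)$, the codimension is at least $2$ in $X_\mu(\tau)$.

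\emph{Complement on the $a\ge2$ locus.} By Remark \ref{dense Coxeter stratum}, $X_\mu(\tau)'$ is equidimensional of codimension $1$. By Theorem \ref{main theorem a-number}, $U\cap X_\mu(\tau)'$ is dense in every irreducible component of $X_\mu(\tau)'$. Its complement in $X_\mu(\tau)'$ is closed, by the openness step, and contains no irreducible component. Hence it has codimension at least $1$ in $X_\mu(\tau)'$, and so at least $2$ in $X_\mu(\tau)$.

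\emph{Main obstacle.} The delicate step is the first one: we must pass from the pointwise fixed loci of single elements to a locally finite union. This requires checking that only finitely many $\J^1$-conjugacy classes of finite-order elements can fix points in a quasi-compact region. This follows from the compactness of stabilizers together with Lemma \ref{finite quotient}, where we use that local finiteness of the irreducible components lets us cover $X_\mu(\tau)$ by $\J$-translates of quasi-compact pieces. We also need that fixed loci are closed, which holds by separatedness.
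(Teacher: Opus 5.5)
\textbf{Verdict: gap in the openness step.} Your outline is the paper's: openness, then the stratum $\pi(X_{\vp^\mu v}(\tau))$ via Propositions \ref{fixed locus positive Coxeter} and \ref{fixed locus positive Coxeter odd}, then $X_\mu(\tau)'$ via Theorem \ref{main theorem a-number} and the fact that $X_\mu(\tau)'$ has codimension $1$. The last two steps are fine. The openness step does not work as written, for four reasons.
\begin{enumerate}
\item Lemma \ref{finite quotient} does not apply to an arbitrary quasi-compact neighbourhood $Z$ of $xK$. Its hypothesis is that the stabilizer of $Z$ in $\J$ is parahoric. Its conclusion concerns only elements that stabilize $Z$, whereas a point of $Z\setminus U$ is fixed by some $j$ that need not stabilize $Z$.
\item The stabilizer of $xK$ in $\J^1$ is $\J^1\cap xKx^{-1}$, which is compact open, not finite.
\item Finiteness of $\J^1$-conjugacy classes is the wrong kind of finiteness. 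The fixed locus of $hjh^{-1}$ is the $h$-translate of that of $j$, and a non-central class has infinitely many members. So finitely many classes does not give finitely many sets $Z^j$.
\item The phrase ``a conjugacy class acts on $Z$ through a finite quotient'' has no meaning.
\end{enumerate}
Nothing you wrote therefore shows that $Z\setminus U$ is a finite union of closed sets. Your $a=1$ step inherits this, since it uses the same local finiteness.

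\textbf{The fix, which is the paper's argument.} You need the relative compactness of the set of elements fixing some point of $Z$, not a count of conjugacy classes.
\begin{itemize}
\item Since $Z$ is quasi-compact, $Z\subseteq\bigcup_{i=1}^rK\vp^{\lambda_i}K/K$.
\item If $jxK=xK$ with $x\in K\vp^{\lambda_i}K$, then $j\in xKx^{-1}\subseteq K\vp^{\lambda_i}K\vp^{-\lambda_i}K$. So every $j\in\J^1$ fixing a point of $Z$ lies in the compact set $\Sigma=\bigcup_i\bigl(\J^1\cap K\vp^{\lambda_i}K\vp^{-\lambda_i}K\bigr)$.
\item By the proof of Lemma \ref{finite quotient} (not its statement), some open subgroup $N\subseteq\J^1\cap K$ acts trivially on $\bigcup_i\overline{K\vp^{\lambda_i}K/K}$. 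Hence $Z^{jn}=Z^j$ for all $n\in N$.
\item Covering $\Sigma$ by finitely many cosets $jN$ shows that only finitely many distinct closed sets $Z^j$ occur, and $Z\setminus U$ is their union.
\end{itemize}
With this repair your proof coincides with the paper's.

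The paper uses Lemma \ref{finite quotient} itself only in the $a=1$ step, one irreducible component of $\pi(X_{\vp^\mu v}(\tau))$ at a time. These components are pairwise disjoint, so any $j$ fixing a point of one stabilizes it and lies in its parahoric stabilizer. That is exactly the setting in which the lemma applies.
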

\begin{proof}
Let $Z\subset X_\mu(\tau)$ be affine open. Since $Z$ is quasi-compact, $Z$ is contained in $\bigcup_{i=1}^rK\vp^{\lambda_i}K/K$ for some $\lambda_1,\ldots,\lambda_r\in X_*(T)_+$. Every $j\in\J^1$ fixing a point of $Z$ lies in the compact subset $\bigcup_{i=1}^r(\J^1\cap K\vp^{\lambda_i}K\vp^{-\lambda_i}K)$ of $\J^1$. Thus $Z\setminus U$ is a finite union of closed subsets of the form $\{xK\in Z\mid jxK=xK\}$ (cf.\ the proof of Lemma \ref{finite quotient}). Therefore $U\cap Z$ is open, and hence $U$ is open.
Let $v$ be as in Remark \ref{dense Coxeter stratum}. By Lemma \ref{finite quotient}, Proposition \ref{fixed locus positive Coxeter} and Proposition \ref{fixed locus positive Coxeter odd}, $\pi(X_{\vp^\mu v}(\tau))\setminus U$ has codimension at least $2$ in $X_\mu(\tau)$. By Theorem \ref{main theorem a-number}, $X_\mu(\tau)'\setminus U$ has positive codimension in $X_\mu(\tau)'$. Since $X_\mu(\tau)'$ has codimension $1$ in $X_\mu(\tau)$, the assertion follows.
\end{proof}

\bibliographystyle{myamsplain}
\bibliography{reference}

\providecommand{\bysame}{\leavevmode\hbox to3em{\hrulefill}\thinspace}
\providecommand{\MR}{\relax\ifhmode\unskip\space\fi MR }
\providecommand{\MRhref}[2]{%
  \href{http://www.ams.org/mathscinet-getitem?mr=#1}{#2}
}
\providecommand{\href}[2]{#2}
\begin{thebibliography}{10}

\bibitem{BS17}
B.~Bhatt and P.~Scholze, \emph{Projectivity of the {W}itt vector affine
  {G}rassmannian}, Invent. Math. \textbf{209} (2017), no.~2, 329--423.
  \MR{3674218}

\bibitem{BR06}
C.~Bonnaf\'e and R.~Rouquier, \emph{On the irreducibility of
  {D}eligne-{L}usztig varieties}, C. R. Math. Acad. Sci. Paris \textbf{343}
  (2006), no.~1, 37--39. \MR{2241956}

\bibitem{CO11}
C.-L. Chai and F.~Oort, \emph{Monodromy and irreducibility of leaves}, Ann. of
  Math. (2) \textbf{173} (2011), no.~3, 1359--1396.

\bibitem{CV18}
M.~Chen and E.~Viehmann, \emph{Affine {D}eligne-{L}usztig varieties and the
  action of {$J$}}, J. Algebraic Geom. \textbf{27} (2018), no.~2, 273--304.
  \MR{3764277}

\bibitem{Dragutinovic24}
D.~Dragutinovi\'c, \emph{{O}ort's conjecture and automorphisms of supersingular
  curves of genus four}, arXiv:2405.01282 (2024).

\bibitem{EMO01}
S.~J. Edixhoven, B.~J.~J. Moonen, and F.~Oort, \emph{Open problems in algebraic
  geometry}, Bull. Sci. Math. \textbf{125} (2001), no.~1, 1--22.

\bibitem{Gashi10}
Q.~R. Gashi, \emph{On a conjecture of {K}ottwitz and {R}apoport}, Ann. Sci.
  \'{E}c. Norm. Sup\'{e}r. (4) \textbf{43} (2010), no.~6, 1017--1038.
  \MR{2778454}

\bibitem{Gortz19}
U.~G\"{o}rtz, \emph{Stratifications of affine {D}eligne-{L}usztig varieties},
  Trans. Amer. Math. Soc. \textbf{372} (2019), no.~7, 4675--4699. \MR{4009395}

\bibitem{GH10}
U.~G\"{o}rtz and X.~He, \emph{Dimensions of affine {D}eligne-{L}usztig
  varieties in affine flag varieties}, Doc. Math. \textbf{15} (2010),
  1009--1028. \MR{2745691}

\bibitem{GH15}
\bysame, \emph{Basic loci of {C}oxeter type in {S}himura varieties}, Camb. J.
  Math. \textbf{3} (2015), no.~3, 323--353. \MR{3393024}

\bibitem{GHN19}
U.~G\"{o}rtz, X.~He, and S.~Nie, \emph{Fully {H}odge-{N}ewton decomposable
  {S}himura varieties}, Peking Math. J. \textbf{2} (2019), no.~2, 99--154.
  \MR{4060001}

\bibitem{GHN24}
U.~G\"ortz, X.~He, and S.~Nie, \emph{Basic loci of {C}oxeter type with
  arbitrary parahoric level}, Canad. J. Math. \textbf{76} (2024), no.~1,
  126--172. \MR{4687768}

\bibitem{GS26}
U.~G\"{o}rtz and S.~Schr\"{o}er, \emph{Deligne--{L}usztig varieties whose
  canonical divisors have negativity}, arXiv:2605.02522 (2026).

\bibitem{HV18}
P.~Hamacher and E.~Viehmann, \emph{Irreducible components of minuscule affine
  {D}eligne-{L}usztig varieties}, Algebra Number Theory \textbf{12} (2018),
  no.~7, 1611--1634. \MR{3871504}

\bibitem{Harashita04}
S.~Harashita, \emph{The {$a$}-number stratification on the moduli space of
  supersingular abelian varieties}, J. Pure Appl. Algebra \textbf{193} (2004),
  no.~1--3, 163--191.

\bibitem{Harashita10}
\bysame, \emph{Ekedahl--{O}ort strata contained in the supersingular locus and
  {D}eligne--{L}usztig varieties}, J. Algebraic Geom. \textbf{19} (2010),
  no.~3, 419--438.

\bibitem{HV12}
U.~Hartl and E.~Viehmann, \emph{Foliations in deformation spaces of local
  {$G$}-shtukas}, Adv. Math. \textbf{229} (2012), no.~1, 54--78. \MR{2854170}

\bibitem{He09}
X.~He, \emph{{$G$}-stable pieces and partial flag varieties}, Representation
  theory, Contemp. Math., vol. 478, Amer. Math. Soc., Providence, RI, 2009,
  pp.~61--70. \MR{2513266}

\bibitem{He23}
\bysame, \emph{A generalization of cyclic shift classes}, Selecta Math. (N.S.)
  \textbf{29} (2023), no.~5, Paper No. 83.

\bibitem{HN14}
X.~He and S.~Nie, \emph{Minimal length elements of extended affine {W}eyl
  groups}, Compos. Math. \textbf{150} (2014), no.~11, 1903--1927. \MR{3279261}

\bibitem{HNY22}
X.~He, S.~Nie, and Q.~Yu, \emph{Affine {D}eligne--{L}usztig varieties with
  finite {C}oxeter parts}, Algebra Number Theory \textbf{18} (2024), no.~9,
  1681--1714.

\bibitem{HZZ21}
X.~He, R.~Zhou, and Y.~Zhu, \emph{Stabilizers of irreducible components of
  affine {D}eligne--{L}usztig varieties}, J. Eur. Math. Soc. (JEMS) \textbf{27}
  (2025), no.~6, 2387--2441.

\bibitem{Ibukiyama20}
T.~Ibukiyama, \emph{Principal polarizations of supersingular abelian surfaces},
  J. Math. Soc. Japan \textbf{72} (2020), no.~4, 1161--1180.

\bibitem{KP19}
V.~Karemaker and R.~Pries, \emph{Fully maximal and fully minimal abelian
  varieties}, J. Pure Appl. Algebra \textbf{223} (2019), no.~7, 3031--3056.

\bibitem{KYY21}
V.~Karemaker, F.~Yobuko, and C.-F. Yu, \emph{Mass formula and {O}ort's
  conjecture for supersingular abelian threefolds}, Adv. Math. \textbf{386}
  (2021), Paper No. 107812.

\bibitem{KY24}
V.~Karemaker and C.-F. Yu, \emph{Supersingular {E}kedahl-{O}ort strata and
  {O}ort's conjecture}, arXiv:2406.19748 (2024).

\bibitem{KY26}
\bysame, \emph{{O}ort's conjecture on supersingular abelian varieties in odd
  characteristic}, arXiv:2608.16405 (2026).

\bibitem{KR00}
R.~Kottwitz and M.~Rapoport, \emph{Minuscule alcoves for {${\rm GL}_n$} and
  {$G{\rm Sp}_{2n}$}}, Manuscripta Math. \textbf{102} (2000), no.~4, 403--428.
  \MR{1785323}

\bibitem{Kottwitz85}
R.~E. Kottwitz, \emph{Isocrystals with additional structure}, Compositio Math.
  \textbf{56} (1985), no.~2, 201--220. \MR{809866}

\bibitem{Lusztig76Green}
G.~Lusztig, \emph{On the {G}reen polynomials of classical groups}, Proc. London
  Math. Soc. (3) \textbf{33} (1976), no.~3, 443--475.

\bibitem{Lusztig76}
\bysame, \emph{{C}oxeter orbits and eigenspaces of {F}robenius}, Invent. Math.
  \textbf{38} (1976/77), no.~2, 101--159. \MR{453885}

\bibitem{MT11}
G.~Malle and D.~Testerman, \emph{Linear algebraic groups and finite groups of
  {L}ie type}, Cambridge Studies in Advanced Mathematics, vol. 133, Cambridge
  University Press, Cambridge, 2011.

\bibitem{Nie22}
S.~Nie, \emph{Irreducible components of affine {D}eligne-{L}usztig varieties},
  Camb. J. Math. \textbf{10} (2022), no.~2, 433--510.

\bibitem{NSY25}
S.~Nie, F.~Schremmer, and Q.~Yu, \emph{Lifting {D}eligne-{L}usztig reduction
  and geometric {C}oxeter type elements}, arXiv:2507.18453 (2025).

\bibitem{Oort77}
F.~Oort, \emph{Singularities of coarse moduli schemes}, S\'{e}minaire
  d'Alg\`{e}bre Paul Dubreil, 29\`{e}me ann\'{e}e (Paris, 1975--1976), Lecture
  Notes in Mathematics, vol. 586, Springer, Berlin, 1977, pp.~61--76.

\bibitem{PR08}
G.~Pappas and M.~Rapoport, \emph{Twisted loop groups and their affine flag
  varieties}, Adv. Math. \textbf{219} (2008), no.~1, 118--198, With an appendix
  by T. Haines and M. Rapoport. \MR{2435422}

\bibitem{RZ96}
M.~Rapoport and T.~Zink, \emph{Period spaces for {$p$}-divisible groups},
  Annals of Mathematics Studies, vol. 141, Princeton University Press,
  Princeton, NJ, 1996. \MR{1393439}

\bibitem{Schremmer23}
F.~Schremmer, \emph{Newton strata in {L}evi subgroups}, Manuscripta Math.
  \textbf{175} (2024), no.~1-2, 513--519. \MR{4790570}

\bibitem{SSY23}
F.~Schremmer, R.~Shimada, and Q.~Yu, \emph{Affine {D}eligne-{L}usztig varieties
  of positive {C}oxeter type}, arXiv:2312.02630 (2023).

\bibitem{Shimada4}
R.~Shimada, \emph{The {E}kedahl-{O}ort stratification and the semi-module
  stratification}, Canad. J. Math. \textbf{78} (2026), no.~3, 695--732.

\bibitem{Shimada6}
\bysame, \emph{On {$\mathbb J$}-strata with parahoric stabilizers in affine
  {D}eligne-{L}usztig varieties}, arXiv:2606.03062 (2026).

\bibitem{STinprep}
R.~Shimada and T.~Takamatsu, \emph{The dimension of the moduli space of
  principally polarized abelian varieties with extra automorphisms}, In
  preparation.

\bibitem{ST24}
\bysame, \emph{On the supersingular locus of the {S}iegel modular variety of
  genus $3$ or $4$}, arXiv:2403.19505, to appear in Ann. Inst. Fourier
  (Grenoble).

\bibitem{Takaya25}
Y.~Takaya, \emph{Equidimensionality of affine {D}eligne-{L}usztig varieties in
  mixed characteristic}, Adv. Math. \textbf{465} (2025), Paper No. 110153, 22.
  \MR{4864699}

\bibitem{Viehmann08b}
E.~Viehmann, \emph{The global structure of moduli spaces of polarized
  {$p$}-divisible groups}, Doc. Math. \textbf{13} (2008), 825--852.

\bibitem{Viehmann26}
\bysame, \emph{Oort's conjecture on automorphisms of generic supersingular
  abelian varieties}, arXiv:2603.06033 (2026).

\bibitem{Wang21}
H.~Wang, \emph{Deligne-{L}usztig varieties and basic {EKOR} strata}, Canad.
  Math. Bull. \textbf{64} (2021), no.~2, 349--367. \MR{4273205}

\bibitem{ZZ20}
R.~Zhou and Y.~Zhu, \emph{Twisted orbital integrals and irreducible components
  of affine {D}eligne-{L}usztig varieties}, Camb. J. Math. \textbf{8} (2020),
  no.~1, 149--241. \MR{4085434}

\bibitem{Zhu17}
X.~Zhu, \emph{Affine {G}rassmannians and the geometric {S}atake in mixed
  characteristic}, Ann. of Math. (2) \textbf{185} (2017), no.~2, 403--492.
  \MR{3612002}

\end{thebibliography}

\end{document}